\theoremstyle{plain}
\newtheorem{thm}{Thm}[section]
\newtheorem{theorem}[thm]{Theorem}
\newtheorem{lemma}[thm]{Lemma}
\newtheorem{corollary}[thm]{Corollary}
\newtheorem{conjecture}[thm]{Conjecture}
\newtheorem{problem}[thm]{Problem}
\newtheorem{observation}[thm]{Observation}
\newtheorem{definition}[thm]{Definition}
\providecommand{\customgenericname}{}
\newcommand{\newcustomtheorem}[2]{%
	\newenvironment{#1}[1]
	{%
		\renewcommand\customgenericname{#2}%
		\renewcommand\theinnercustomgeneric{##1}%
		\innercustomgeneric
	}
	{\endinnercustomgeneric}
}
\newtheoremstyle{manual}
  {\topsep}{\topsep}
  {\itshape}{}
  {\bfseries}{.}{ }
  {#1 #3}
\theoremstyle{manual}
\newtheorem*{manualthm}{Theorem}
\newenvironment{proof*}{\noindent \emph{Proof.}}{\hfill$\Diamond$}
\renewcommand{\pod}[1]{\allowbreak\mathchoice
	{\if@display \mkern 0mu\else \mkern 0mu\fi (#1)}
	{\if@display \mkern 0mu\else \mkern 0mu\fi (#1)}
	{\mkern 1mu(\mathrm{mod}\mkern 4mu #1)}
	{\mkern 0mu(#1)}
}
\tikzstyle{vertex}=[circle, draw, fill=black!50,
\tikzset{->-/.style={decoration={
			markings,
			mark=at position .5 with {\arrow{>}}},postaction={decorate}}}
\tikzstyle{bigblue}=[color=blue, very thick, >=stealth]
\tikzstyle{lightblue}=[color=blue, thin, >=stealth]
\tikzstyle{bigred}=[color=red, very thick, >=stealth]
\tikzstyle{lightred}=[color=red, thin, >=stealth]
\tikzstyle{biggreen}=[color=black!30!green, very thick, >=stealth]
\tikzstyle{lightgreen}=[color=black!30!green,  thin, >=stealth]
\def\R{\mathbb{R}}
\newcolumntype{M}[1]{>{\centering\arraybackslash}m{#1}}
\newcommand{\bs}[1]{\boldsymbol{#1}}
\title{High-Dimensional $p$-Normed Flows}
\author{Chenxing Li$^1$, Jiaao Li$^1$, Rong Luo$^2$, and Bo Su$^1$\\
\small $^1$School of Mathematical Sciences and LPMC, Nankai University, Tianjin 300071, China \\
\small $^2$Department of Mathematics, West Virginia University, Morgantown 26505, USA\\
\small Emails: chenxingli@mail.nankai.edu.cn; lijiaao@nankai.edu.cn; rluo@mail.wvu.edu; suboll@mail.nankai.edu.cn
}
\date{}
\begin{document}

\maketitle

\begin{abstract}
We generalize Tutte's integer flows and the $d$-dimensional Euclidean flows of Mattiolo, Mazzuoccolo, Rajn\'{i}k, and Tabarelli to \emph{$d$-dimensional $p$-normed nowhere-zero flows} and define the corresponding flow index $\phi_{d,p}(G)$ to be the infimum over all real numbers $r$ for which $G$ admits a $d$-dimensional $p$-normed nowhere-zero $r$-flow.
For any bridgeless graph $G$ and any $p\ge 1$, we establish general upper bounds, including $\phi_{2,p}(G) \le 3$, $\phi_{3,p}(G) \le 1+\sqrt{2}$, and tight bounds for graphs admitting a $4$-NZF.
For graphs with oriented $(k+1)$-cycle $2l$-covers, we show that $\phi_{k,p}(G) = 2$, which implies $\phi_{2,p}(G) = 2$ for graphs admitting a nowhere-zero $3$-flow and $\phi_{3,p}(G) = 2$ for those admitting a nowhere-zero $4$-flow.
These results extend classical flow theory to arbitrary norms, provide supporting evidences for Tutte's $5$-flow Conjecture and Jain's $S^2$-Flow Conjecture, and connect combinatorial flows with geometric and topological perspectives.

\medskip
\noindent\textbf{Keywords.} high-dimensional flows, $p$-norm, flow index, oriented cycle double cover.
\end{abstract}

\section{Introduction}

Graphs considered in this paper are finite and may have multiple edges but no loops.
For terminology and notation not defined here, we follow \cite{BM08, zhang1997integer}.

\subsection{Background, motivation, and definitions}

Tutte~\cite{Tutte54, Tutte1949} introduced the theory of integer flows as a dual problem
to vertex colorings of planar graphs. The concept of integer flows was later extended
to real-valued flows by Goddyn, Tarsi, and Zhang~\cite{Goddyn}, and further generalized
to vector flows by Jain~\cite{devos} and Thomassen~\cite{thomassen2014group}.
These developments provide a geometric framework for studying classical flow
conjectures, most notably Tutte’s $5$-Flow Conjecture.

Let $D$ be an orientation of a graph $G$. For a vertex $v$, let $E_D^+(v)$ and
$E_D^-(v)$ denote the sets of edges directed out of and into $v$, respectively.

\begin{definition}
Let $\Omega \subseteq \mathbb{R}^d$ and let $G$ be a graph with an orientation $D$.
An ordered pair $(D,f)$ is a \emph{vector $\Omega$-flow} of $G$ if
$f \colon E(G)\to \Omega$ satisfies
\[
\sum_{e\in E_D^+(v)} f(e)-\sum_{e\in E_D^-(v)} f(e)=\bm{0}
\]
for every vertex $v\in V(G)$.
\end{definition}

If the context is clear, we simply write \emph{$\Omega$-flow}.
Vector flows were first studied for $\Omega=S^{d-1}$ in
\cite{thomassen2014group, wang2015vector}, where
\(
S^{d-1}=\{\bs{x}\in\mathbb{R}^d:\|\bs{x}\|_2=1\}.
\)
Mattiolo, Mazzuoccolo, Rajn\'{i}k, and Tabarelli~\cite{mattiolo2023d} further introduced $d$-dimensional $r$-flows,
allowing flow values in $\mathbb{R}^d$ under the Euclidean norm.
In this paper, we extend this framework to arbitrary $p$-norms.

Throughout this paper, ``$\infty$'' always denotes $+\infty$.  
For $a \in \mathbb{R}$, we define
\[
[a,\infty] := \{ x \in \mathbb{R} : x \ge a \} \cup \{\infty\}, \qquad
(a,\infty] := \{ x \in \mathbb{R} : x > a \} \cup \{\infty\}.
\]
For functions defined on intervals of the form $[a,\infty)$ or $(a,\infty)$,
we interpret $f(\infty)$ as $\lim_{x\to\infty} f(x)$ whenever the limit exists.
In particular, for $\bs{x}=(x_1,\dots,x_d)\in\mathbb{R}^d$,
\[
\|\bs{x}\|_p=\bigl(|x_1|^p+\cdots+|x_d|^p\bigr)^{1/p}, \quad p\in[1,\infty),
\qquad
\|\bs{x}\|_\infty=\max_i |x_i|.
\]

\begin{definition}\label{def:annular-flow}
Let $d \ge 1$ be an integer, $r \ge 2$, and $p \in [1, \infty]$.  
A \emph{$d$-dimensional $p$-normed nowhere-zero $r$-flow} of a graph $G$,
abbreviated as an $(r,d,p)$-NZF, is a vector $(\mathbb{R}^d\setminus\{\bm{0}\})$-flow
$(D,f)$ such that
\[
1\le \|f(e)\|_p \le r-1 \quad \text{for every } e\in E(G).
\]
\end{definition}

The \emph{$d$-dimensional $p$-normed flow index} of a bridgeless graph $G$,
denoted $\phi_{d,p}(G)$, is the infimum over all real numbers $r$ for which
$G$ admits an $(r,d,p)$-NZF.
It is immediate that $\phi_{d+1,p}(G)\le \phi_{d,p}(G)$ for all $d$ and $p$. As observed in \cite{mattiolo2023d} for the Euclidean case, $\phi_{d,2}(G)$ is in fact a minimum; the same argument shows that $\phi_{d,p}(G)$ is also a minimum.

When $d=1$, this notion reduces to circular flows, and we write $\phi_1(G)$. For a positive integer $k$, a $(k,1,p)$-NZF is exactly a classical nowhere-zero $k$-flow, abbreviated as a $k$-NZF. The celebrated Tutte’s $5$-Flow Conjecture states the following. 
\begin{conjecture}[Tutte’s $5$-Flow Conjecture]
$\phi_{1}(G)\le 5$ for every bridgeless graph $G$.
\end{conjecture}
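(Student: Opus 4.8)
The plan is to follow the standard reduction strategy for flow conjectures and then attack the residual case. First I would reduce to cubic graphs: a minimal counterexample $G$ to $\phi_{1}(G)\le 5$ must be cyclically $4$-edge-connected and cubic, since a vertex of degree $\ge 4$ can be split without decreasing the flow index, and a cyclic edge-cut of size $\le 3$ can be handled by gluing flows obtained inductively on the two sides. Because a cubic graph with a proper $3$-edge-colouring admits a nowhere-zero $4$-flow (hence $\phi_{1}\le 4<5$), the only surviving case is the class of snarks. The second step would be to invoke the known partial results in the direction of the conjecture — Jaeger's $8$-flow theorem and Seymour's $6$-flow theorem, which already give $\phi_{1}(G)\le 6$ for every bridgeless $G$ — so that the entire remaining content is to close the gap from $6$ to $5$ on snarks.

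For the third step I would try to upgrade Seymour's construction. Seymour's proof covers $E(G)$ using a $\mathbb{Z}_2$-flow together with a $\mathbb{Z}_3$-flow whose supports overlap only in a $2$-regular "error" subgraph, and one hopes to merge these into a single $\mathbb{Z}_5$-flow by choosing that $2$-regular subgraph more cleverly — for instance as a union of circuits arising from an oriented cycle cover of small total length, which would tie the argument directly to the oriented cycle-cover machinery developed elsewhere in this paper (the same machinery that yields the exact values $\phi_{d,p}(G)=2$ quoted in the abstract). A complementary route would be Kochol-style reducibility: show that a minimal counterexample must contain one of finitely many reducible configurations, each of which can be eliminated by a local flow-extension argument, and then rule them all out; the Petersen graph, which itself admits a nowhere-zero $5$-flow, shows that the obvious small structures impose no obstruction.

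The main obstacle — and here I must be candid — is that this statement is Tutte's $5$-Flow Conjecture itself, one of the central open problems of graph theory, and the step that closes the last unit of the gap between Seymour's bound $6$ and the conjectured $5$ has resisted every one of the approaches sketched above for more than seventy years. The reduction to cyclically $4$-edge-connected snarks is routine, and Seymour's theorem is the best general bound known, but no proof of the conjecture is presently available. Consequently the results actually established in this paper should be read as further \emph{evidence} for the conjecture — analogous statements for $d$-dimensional $p$-normed flows, where the extra geometric freedom makes the target flow index attainable — rather than as a resolution of it.
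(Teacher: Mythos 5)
You have correctly identified the situation: the statement you were given is Tutte's $5$-Flow Conjecture, which the paper states only as a conjecture for motivation and does not prove — no proof exists in the paper or anywhere else. Your write-up is therefore not a proof and does not claim to be one; it is an accurate survey of the standard reduction (minimal counterexamples are cyclically $4$-edge-connected cubic graphs, $3$-edge-colourable cubic graphs admit a $4$-NZF, so only snarks remain), of the best known general bounds (Seymour's $6$-flow theorem, Jaeger's $8$-flow theorem), and of the main candidate strategies (strengthening Seymour's $\mathbb{Z}_2\times\mathbb{Z}_3$-type decomposition, Kochol-style reducible configurations), together with a candid admission that the decisive step closing the gap from $6$ to $5$ is missing.

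The only point to flag is the obvious one: as a proof attempt the proposal has a total gap — every sketched route stops exactly where the conjecture becomes hard, and none of the machinery in this paper (the $\Omega$-flow method, the oriented cycle-cover constructions giving $\phi_{d,p}(G)=2$) is known to feed back into a bound on the one-dimensional index $\phi_1(G)$ below $6$; the results here run in the opposite direction, using integer flows and cycle covers as hypotheses to bound higher-dimensional indices. So your final paragraph is the correct reading of the paper: these results are evidence for, not progress toward a proof of, the $5$-Flow Conjecture, and there was nothing in the paper's treatment of this statement for your proposal to match or diverge from.
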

In 1981, Seymour~\cite{seymour1981} established the $6$-flow theorem by proving $\phi_1(G)\le6$  for every bridgeless graph.

An $S^{d-1}$-flow is a $d$-dimensional $2$-normed flow.
Jain {\color{red}\cite{devos}} proposed the following two conjectures, which together imply Tutte's $5$-Flow Conjecture.
\begin{conjecture}[Jain’s $S^2$-Flow Conjecture]
$\phi_{3,2}(G)=2$ for every bridgeless graph $G$.
\end{conjecture}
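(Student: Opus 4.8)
\noindent\emph{Proof proposal.} As this is an open conjecture, the following is a proposed line of attack, which I expect to stall at one identifiable step. The lower bound is free: Definition~\ref{def:annular-flow} only considers $r\ge 2$, so $\phi_{3,2}(G)\ge 2$ always, and since the infimum in the definition of $\phi_{3,2}$ is attained, the conjecture is equivalent to the assertion that every bridgeless $G$ admits a $(2,3,2)$-NZF, i.e.\ an orientation $D$ together with a map $f\colon E(G)\to S^2$ obeying flow conservation at every vertex, the band $1\le\|f(e)\|_2\le r-1$ collapsing to $\|f(e)\|_2=1$ at $r=2$. So the entire content is to construct a nowhere-zero $S^2$-flow.

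The plan is to follow the usual stratification of flow problems. First I would try to reduce to cubic bridgeless graphs: subdivisions are harmless (repeat the same unit vector, consistently oriented, across the two new edges) and $2$-edge-cuts reduce; the delicate step is splitting a vertex of degree $\ge 4$, since at $r=2$ there is no norm slack to absorb the vector forced on the new edge, so --- unlike for integer flows --- this reduction needs its own argument, or one works with general bridgeless graphs throughout. Granting a cubic $G$, stratify by chromatic index. For $G$ that is $3$-edge-colourable, equivalently that admits a nowhere-zero $4$-flow, the conclusion $\phi_{3,2}(G)=2$ is already among the results of the present paper; more generally, the oriented-cycle-cover theorems here settle the conjecture for every $G$ with a sufficiently rich short-cycle structure --- one routes a direction vector $\lambda_i\in\mathbb{R}^3$ around each cycle $C_i$ of an appropriate oriented $2l$-cover, takes $f=\sum_i\lambda_i[C_i]$, and uses the abundance of summands contributing to each edge value, together with the shortness of the cycles, to pin every $\|f(e)\|_2$ to $1$.

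The real obstacle --- and the reason the statement is still a conjecture --- is the snark case: $G$ cubic, cyclically $4$-edge-connected, of girth $\ge 5$, not $3$-edge-colourable. The short-cycle machinery is inapplicable, since the Petersen graph already has no cycle of length $\le 4$; the natural fallback is to start from an oriented cycle double cover (known for Petersen and for all known snarks), route a vector $\lambda_i$ around each $C_i$, and take $f=\sum_i\lambda_i[C_i]$ again --- but now every edge lies in exactly two covering cycles, so $f(e)=\pm\lambda_i\pm\lambda_j$, and forcing all these finitely many two-term sums simultaneously onto $S^2$ leaves so little freedom that the system essentially degenerates into a proper $3$-edge-colouring, which a snark cannot have. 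Allowing the flow to vary along each cycle, or passing to a higher-multiplicity oriented cover by longer cycles, restores local slack but --- as far as I can see --- gives no handle on the global consistency, and this is exactly where I expect the argument to break down. The realistic and worthwhile intermediate goal, and the one I would actually carry through, is to enlarge the class for which the constructive argument succeeds, say from ``admits a nowhere-zero $4$-flow'' toward ``admits an oriented cycle double cover with a controllable intersection pattern'', thereby strengthening the evidence for the conjecture rather than resolving it.
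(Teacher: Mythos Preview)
The statement is Jain's $S^2$-Flow Conjecture, which the paper lists as an \emph{open conjecture} and does not prove; there is therefore no proof in the paper to compare your attempt against. What the paper does establish toward it is the general upper bound $\phi_{3,2}(G)\le 1+\sqrt{2}$ (Theorem~\ref{thm:3-dim-p-12-flow}) and the equality $\phi_{3,2}(G)=2$ for graphs admitting a $4$-NZF (Corollary~\ref{Cor:3-4-flow}(2)), or more generally a $(4,2l)$-OCC (Theorem~\ref{thm:intro-k-OCDC-phi}(1) with $k=4$).

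You correctly treat the statement as open and present a research outline rather than a proof, which is the appropriate stance. Your stratification is accurate: the $4$-NZF case is settled by the paper's results, and the snark case is the genuine obstruction. Your diagnosis of why the OCDC route stalls in dimension~$3$ --- that with a $k$-cycle double cover each edge value is $\pm\lambda_i\pm\lambda_j$, and pinning all such two-term sums to $S^2$ with $k\ge 5$ vectors $\lambda_i\in\mathbb{R}^3$ is overdetermined --- is exactly why Theorem~\ref{thm:intro-k-OCDC-phi}(1) delivers $\phi_{k-1,p}(G)=2$ and not anything lower-dimensional. One small addendum: the paper remarks in Section~2 that $\phi_{3,2}(P_{10})=2$ can be checked computationally via the spanning-tree symbolic method, so the Petersen graph itself satisfies the conjecture; the difficulty, as you correctly locate it, is the absence of a uniform argument for snarks. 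Your caution about the vertex-splitting reduction to cubic graphs is also well placed: at $r=2$ there is no norm slack on the new edge, so the standard reduction does not go through automatically.
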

\begin{conjecture}[Jain's $S^2$-Map Conjecture]
    There exists a map $\varphi:S^2 \rightarrow \{-4,-3,-2,-1,1,2,3,4\}$ so that antipodal points of $S^2$ receive opposite values, and so that the three vertices of any inscribed equilateral triangle on a great circle have images whose sum is zero.
\end{conjecture}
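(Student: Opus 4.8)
This is an open conjecture, so any proposal here is necessarily a line of attack rather than a proof; the plan is to recast it as a finite‑alphabet constraint satisfaction problem on the sphere and reduce it, via compactness, to a statement about finite configurations. First I would record the elementary reformulation: three unit vectors $a,b,c$ are the vertices of an equilateral triangle inscribed in a great circle \emph{if and only if} $a+b+c=\bm{0}$. (``Only if'' is immediate since $I+R+R^2=\bm{0}$ for a $120^\circ$ rotation $R$; ``if'' follows because $\|a+b\|_2^2=\|c\|_2^2=1$ forces $a\cdot b=-\tfrac{1}{2}$ for each pair, and $c=-a-b$ forces coplanarity.) Thus we seek $\varphi\colon S^2\to\{\pm1,\pm2,\pm3,\pm4\}$ with $\varphi(-x)=-\varphi(x)$ and $\varphi(a)+\varphi(b)+\varphi(c)=0$ whenever $a+b+c=\bm{0}$. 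Enumerating the triples of nonzero integers in $[-4,4]$ summing to $0$, the label‑multiset of any inscribed equilateral triangle must be one of $\{1,1,-2\}$, $\{1,2,-3\}$, $\{1,3,-4\}$, $\{2,2,-4\}$ or a negative of these; in particular every inscribed triangle must carry values of both signs. Since two points at mutual angle $120^\circ$ already determine the third vertex, the constraints form a connected, rigidly propagating system.

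For the compactness step: ``$\varphi$ exists'' is exactly a coloring problem over the finite alphabet $\{\pm1,\dots,\pm4\}$ for the hypergraph on vertex set $S^2$ whose hyperedges are the zero‑sum triples and the antipodal pairs, each imposing a constraint on its (two or three) endpoints. By a compactness argument (the De Bruijn--Erd\H{o}s principle, applied to finite‑domain constraints), a valid global $\varphi$ exists if and only if every finite $F\subseteq S^2$ admits a labeling satisfying all constraints whose endpoints lie in $F$. One would then try to establish the latter by embedding an arbitrary finite $F$ into a highly structured configuration --- an orbit of a finite rotation group, or a union of finitely many great circles with controlled intersection pattern --- on which a valid labeling can be written down explicitly or found by finite search; icosahedral skeletons and their geodesic refinements are natural candidates, since icosahedral symmetry meshes with $120^\circ$ rotations. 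A softer intermediate target worth isolating first is whether \emph{some} finite palette $\{\pm1,\dots,\pm N\}$ works at all.

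The alternative route is an explicit closed‑form $\varphi$. No nonzero linear functional on $\mathbb{R}^3$ has discrete image on $S^2$, so any valid $\varphi$ must be genuinely nonlinear while remaining additive along zero‑sum triples; I would therefore look for $\varphi=g\circ h$ where $h$ is an additive‑on‑triples map into a finite group (for instance $\mathbb{Z}_9\setminus\{0\}$, which has the right cardinality, or a $\mathbb{Z}_3$‑graded refinement) and $g$ re‑encodes its values as signed integers, testing candidates on a fine geodesic triangulation to see whether any ansatz survives. On a single great circle $C$ the triangle equation $\varphi(\theta)+\varphi(\theta+\tfrac{2\pi}{3})+\varphi(\theta+\tfrac{4\pi}{3})=0$ together with oddness pins $\varphi|_C$ down to Fourier modes of odd order not divisible by $3$ --- still a large space --- so the real work is extending such a choice off $C$ to all of $S^2$ without violating the triangles meeting $C$ transversally.

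The main obstacle, and presumably why the conjecture remains open, is precisely this rigidity‑versus‑boundedness tension: the zero‑sum‑triple constraints thread through every great circle simultaneously, so fixing $\varphi$ on any arc forces it on a dense cascade of other arcs, and there is no evident mechanism keeping these forced values inside an eight‑element set globally rather than drifting (as a continuous functional would) or clashing at the dense set of points where great circles cross. Concretely, the compactness reduction only trades the conjecture for the equally delicate task of labeling every finite sphere configuration, and I expect the crux to be configurations with many concurrent great circles, where the constraints are densest; symmetrically, a disproof would amount to exhibiting a single finite zero‑sum gadget on $S^2$ that provably admits no labeling over $\{\pm1,\dots,\pm4\}$.
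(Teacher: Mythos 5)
This statement is not proved in the paper at all: it is stated as Jain's $S^2$-Map Conjecture, an open problem recorded as motivation (together with the $S^2$-Flow Conjecture it would imply Tutte's $5$-Flow Conjecture), so there is no proof of the paper's to compare yours against. Your own text is candid about this, and to your credit the preliminary observations are sound: the equivalence ``$a,b,c\in S^2$ are the vertices of an equilateral triangle inscribed in a great circle iff $a+b+c=\bm{0}$'' is correct (the argument via $\|a+b\|_2^2=\|c\|_2^2$ forcing $a\cdot b=-\tfrac12$, plus coplanarity from $c=-a-b$, is fine), the enumeration of admissible label-multisets $\{1,1,-2\},\{1,2,-3\},\{1,3,-4\},\{2,2,-4\}$ and their negatives is complete, and the compactness reduction (finite alphabet, constraints on pairs and triples, De Bruijn--Erd\H{o}s/Tychonoff) is legitimate.

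The genuine gap is that after these reductions nothing has been proved: the conjecture is exactly equivalent to the statement that every finite configuration of zero-sum triples and antipodal pairs on $S^2$ admits a labeling over $\{\pm1,\dots,\pm4\}$, and your proposal offers only candidate strategies (orbits of finite rotation groups, icosahedral refinements, a factorization $\varphi=g\circ h$ through a finite group, Fourier analysis on a single great circle) without carrying any of them out or identifying a mechanism that would control the propagation you yourself flag as the obstruction. So this should be read as a research plan consistent with the paper's treatment of the statement as open, not as a proof; if you want to contribute something checkable, the most tractable concrete sub-targets in your outline are (i) deciding by finite search whether specific dense finite gadgets (many concurrent great circles) are labelable, which could in principle yield a disproof, and (ii) your ``softer target'' of establishing the statement for some finite palette $\{\pm1,\dots,\pm N\}$, which is already nontrivial and is not resolved by anything in this paper.
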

Both Tutte's and Jain's conjectures are best possible if true as they can be evident by the Petersen graph. In view of Jain's conjectures,  the study of $\phi_{3,2}(G)$ and, more generally, of $d$-dimensional $p$-normed flows can be regarded as an approach to attack Tutte's $5$-Flow Conjecture.

\subsection{Main results: upper bounds for flow indices}
Motivated by Jain’s conjecture, the study of $\phi_{d,p}(G)$ provides a geometric
approach to classical flow problems.
Mattiolo, Mazzuoccolo, Rajn\'{i}k, and Tabarelli~\cite{mattiolo2023d} studied
$2$-dimensional $2$-normed flows and proved the following.

\begin{theorem}[Mattiolo et~al.~\cite{mattiolo2023d}]
\label{thm:mattiolo-known-bounds}
Let $G$ be a bridgeless graph. Then we have
\begin{enumerate}[label=(\arabic*)]
\item $\phi_{2,2}(G)\le1+\sqrt5$;
\item if $G$ admits a $4$-NZF, then $\phi_{2,2}(G)\le1+\sqrt2$.
\end{enumerate}
\end{theorem}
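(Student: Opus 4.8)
The plan is to manufacture the required $\mathbb{R}^2$-flows by gluing two classical integer flows obtained from a group-flow decomposition. For part (1) I would start from Seymour's $6$-flow theorem in its standard form: since $G$ is bridgeless it carries a nowhere-zero $\mathbb{Z}_2\times\mathbb{Z}_3$-flow, that is, a $\mathbb{Z}_2$-flow $g_1$ together with a $\mathbb{Z}_3$-flow $g_2$ such that $(g_1(e),g_2(e))\neq(0,0)$ for every edge $e$. Next I would lift each coordinate to an integer flow: the support of $g_1$ is an even subgraph, so orienting a cycle decomposition of it yields an integer flow $\phi_1$ with $\phi_1(e)\in\{0,\pm1\}$ and $\mathrm{supp}(\phi_1)=\mathrm{supp}(g_1)$; restricting $g_2$ to its support $S_2$ gives a nowhere-zero $\mathbb{Z}_3$-flow there, hence a nowhere-zero integer $3$-flow, which extended by $0$ off $S_2$ becomes an integer flow $\phi_2$ with $\phi_2(e)\in\{0,\pm1,\pm2\}$ and $\mathrm{supp}(\phi_2)=S_2$. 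Finally I would set $f(e):=(\phi_1(e),\phi_2(e))\in\mathbb{R}^2$; this is an $\mathbb{R}^2$-flow, being the coordinatewise sum of two integer flows, and it is nowhere-zero because $\phi_1(e)=\phi_2(e)=0$ would force $g_1(e)=g_2(e)=0$. Since $(\phi_1(e),\phi_2(e))$ runs over $\{0,\pm1\}\times\{0,\pm1,\pm2\}\setminus\{(0,0)\}$, we get $1\le\|f(e)\|_2\le\sqrt5$, so $(D,f)$ is a $(1+\sqrt5,2,2)$-NZF and $\phi_{2,2}(G)\le1+\sqrt5$.

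For part (2) the same recipe is cleaner: a $4$-NZF is equivalent to a nowhere-zero $\mathbb{Z}_2\times\mathbb{Z}_2$-flow, i.e. two $\mathbb{Z}_2$-flows $g_1,g_2$ that never simultaneously vanish. Each $g_i$ is an even subgraph and hence lifts to an integer flow $\phi_i$ with values in $\{0,\pm1\}$ and the same support, so $f(e):=(\phi_1(e),\phi_2(e))$ is a nowhere-zero $\mathbb{R}^2$-flow with $(\phi_1(e),\phi_2(e))\in\{0,\pm1\}^2\setminus\{(0,0)\}$. Then $1\le\|f(e)\|_2\le\sqrt2$, which gives $\phi_{2,2}(G)\le1+\sqrt2$.

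The delicate point, rather than the crude construction, will be the $\mathbb{Z}_3$-lift: one cannot simply replace each value of $g_2$ by a balanced representative in $\{0,\pm1\}$, since the result need not satisfy flow conservation; the clean way is to pass to the support and invoke the classical equivalence between $\mathbb{Z}_3$-flows and integer $3$-flows there, and this is the only place where bridgelessness is actually used (through Seymour's theorem). A secondary check I would carry out is that the orthogonal integer embedding is essentially optimal for this method: replacing the coordinate axes by an arbitrary linear image $a\mapsto a\bs{u}$, $b\mapsto b\bs{v}$ of the achievable value set, a short analysis on $\|\bs{u}\|$, $\|\bs{v}\|$ and the angle between them shows that the ratio $\max/\min$ of the resulting norms is always at least $\sqrt5$, with equality only in the orthonormal case. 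The real obstacle, and the reason $1+\sqrt5$ is not the last word — indeed the present paper improves it to $\phi_{2,p}(G)\le3$ for all $p\ge1$ — is that this argument merely glues two $1$-dimensional flows and never exploits the continuum of directions in $\mathbb{R}^2$; beating it requires a genuinely two-dimensional construction rather than a lattice one.
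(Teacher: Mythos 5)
Your construction is correct and is essentially the approach underlying this result: the paper only cites Theorem~\ref{thm:mattiolo-known-bounds} from \cite{mattiolo2023d}, but your part (2) coincides with the $p=2$ case of the paper's proof of Theorem~\ref{thm:4-flow-2d-bound} (Lemma~\ref{thm:234-flow}(3) with $P_1=(1,0)$, $P_2=(0,1)$), and your part (1) is the Seymour decomposition of Theorem~\ref{ob:6-flow-decomposition} fed into the machinery of Lemma~\ref{thm:6-flow-points} with the collinear choice $P_1=P_2=(0,1)$, $P_3=(1,0)$, which produces exactly your value set $\{0,\pm1\}\times\{0,\pm1,\pm2\}\setminus\{(0,0)\}$ and the bounds $1\le\|f(e)\|_2\le\sqrt5$. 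Your closing remark is also on target: the paper's improvement to $\phi_{2,p}(G)\le 3$ comes precisely from representing the $3$-flow by two non-collinear vectors $P_1,P_2$ (as in Lemma~\ref{thm:234-flow}(2) and Table~\ref{tab:three_schemes}) rather than by multiples $\pm P,\pm 2P$ of a single direction, which is the ``lattice'' restriction your optimality check shows cannot beat $1+\sqrt5$.
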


In this paper, we study $2$- and $3$-dimensional $p$-normed flows for all $p$ and
establish the following general bounds.

\begin{theorem}\label{thm:general-2-3-bounds}
Let $G$ be a bridgeless graph. Then we have 
\begin{enumerate}[label=(\arabic*)]
\item $\phi_{2,p}(G)\le3$ for every $p\in[1,\infty]$;
\item $\phi_{3,p}(G)\le1+2^{1/p}$ for every $p\in[1,\infty]$, and in particular, $\phi_{3,\infty}(G)=2$;
\item $\phi_{3,1}(G)\le\frac94$.
\end{enumerate}
\end{theorem}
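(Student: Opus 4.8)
The plan is to deduce all three estimates from a single structural fact: every bridgeless graph $G$ has three even subgraphs $C_1,C_2,C_3$ with $C_1\cup C_2\cup C_3=E(G)$ and $C_1\cap C_2\cap C_3=\varnothing$. Orienting a cycle decomposition of each $C_i$ produces, on a common orientation, integer flows $g_1,g_2,g_3\colon E(G)\to\{-1,0,1\}$ with $\operatorname{supp}(g_i)=C_i$, so that $g(e):=(g_1(e),g_2(e),g_3(e))$ is a nowhere-zero flow of $G$ valued in the set $S$ of vectors of $\{-1,0,1\}^3$ having exactly one or two nonzero coordinates. I would obtain this fact from Seymour's $6$-flow theorem: write a nowhere-zero $6$-flow as a $\mathbb{Z}_2\times\mathbb{Z}_3$-flow $(\alpha,\beta)$, set $C_1=\operatorname{supp}(\alpha)$, observe that $\beta$ restricts to a nowhere-zero $\mathbb{Z}_3$-flow on $G_\beta:=(V,\operatorname{supp}(\beta))$, so that $G_\beta$ has a nowhere-zero $3$-flow and hence is covered by two even subgraphs $C_2,C_3$, and argue that $C_2,C_3$ can be chosen so that the set of edges lying in both of them avoids $C_1$. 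Everything after this is elementary linear algebra.

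Parts (2) and (3) then follow immediately. Each $g(e)\in S$ has a coordinate equal to $\pm 1$, so $\|g(e)\|_\infty=1$, and has at most two nonzero coordinates, each of modulus one, so $\|g(e)\|_p\le 2^{1/p}$; hence $g$ is a $(1+2^{1/p},3,p)$-NZF and $\phi_{3,p}(G)\le 1+2^{1/p}$, while for $p=\infty$ it is a $(2,3,\infty)$-NZF, which together with the trivial bound $\phi_{d,p}(G)\ge 2$ gives $\phi_{3,\infty}(G)=2$. For $p=1$, compose $g$ with the invertible linear map $T=5I-3J$ on $\mathbb{R}^3$, where $I$ is the identity and $J$ the $3\times 3$ all-ones matrix (the eigenvalues of $T$ are $5$ and $-4$): one checks $\|Te_i\|_1=\|T(e_i+e_j)\|_1=8$ and $\|T(e_i-e_j)\|_1=10$, so $\tfrac18 T$ carries the eighteen vectors of $S$ to vectors of $\ell_1$-norm $1$ or $\tfrac54$. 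Thus $f:=\tfrac18\,(T\circ g)$ is a $(\tfrac94,3,1)$-NZF and $\phi_{3,1}(G)\le\tfrac94$.

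Part (1) uses the same construction one dimension down, with a $p$-dependent linear map $\mathbb{R}^3\to\mathbb{R}^2$: put $f(e)=g_1(e)c_1+g_2(e)c_2+g_3(e)c_3$ for vectors $c_1,c_2,c_3\in\mathbb{R}^2$. Since $g(e)\in S$, the value $f(e)$ is always one of the nine vectors $\pm c_i$, $\pm(c_i\pm c_j)$, so it suffices to choose $c_1,c_2,c_3$ making all nine nonzero with $\ell_p$-norm in $[1,2]$. This is possible for each $p$: take $(1,0),(0,1),(\tfrac12,\tfrac12)$ for $p=1$ and $(1,0),(0,1),(1,1)$ for $p=\infty$ (all nine norms then lie in $\{1,2\}$), the three unit vectors at angles $0^\circ,60^\circ,120^\circ$ for $p=2$ (norms in $\{1,\sqrt3\}$), and suitable rescalings or interpolations of these for intermediate $p$. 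The resulting $f$ is a $(3,2,p)$-NZF, whence $\phi_{2,p}(G)\le 3$.

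The only genuinely delicate step, and the main obstacle, is the structural fact: passing from a nowhere-zero $6$-flow to three even subgraphs covering $E(G)$ with no common edge comes down to choosing the two-even-subgraph cover of $G_\beta$ so as to avoid $C_1$. This is immediate when $G$ has a nowhere-zero $4$-flow (take $C_1$ empty), but requires a real argument for snarks such as the Petersen graph. A secondary, purely computational point is to confirm the planar vector triple of part (1) for every $p\in[1,\infty]$, the values $p\in\{1,2,\infty\}$ being transparent and the rest following by continuity.
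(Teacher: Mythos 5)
Your reduction rests on the structural claim that every bridgeless graph has three even subgraphs $C_1,C_2,C_3$ with $C_1\cup C_2\cup C_3=E(G)$ and $C_1\cap C_2\cap C_3=\varnothing$, and this claim is not just unproved in your write-up (you flag it yourself as ``the main obstacle'') --- it is false. Encoding the three subgraphs by the $\mathbb{Z}_2^3$-valued flow $\phi(e)=\bigl(\mathbb{1}_{C_1}(e),\mathbb{1}_{C_2}(e),\mathbb{1}_{C_3}(e)\bigr)$, your two conditions say exactly that $\phi$ avoids the values $(0,0,0)$ and $(1,1,1)$. Composing with the quotient homomorphism $\mathbb{Z}_2^3\to\mathbb{Z}_2^3/\{(0,0,0),(1,1,1)\}\cong\mathbb{Z}_2^2$ (concretely $(x,y,z)\mapsto(x+z,\,y+z)$) then yields a nowhere-zero $\mathbb{Z}_2\times\mathbb{Z}_2$-flow, i.e.\ a nowhere-zero $4$-flow; conversely a $4$-NZF gives such a triple with $C_3=\varnothing$, as you note. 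So your structural fact is \emph{equivalent} to $G$ admitting a $4$-NZF, and it fails for the Petersen graph and every snark --- no refinement of the Seymour decomposition can rescue it. Consequently your value set $S$ (vectors of $\{-1,0,1\}^3$ with one or two nonzero entries) is unattainable in general, and everything downstream (the map $T=5I-3J$ for $p=1$, the planar triples for part (1), the $2^{1/p}$ bound for part (2)) only proves the theorem for graphs that already have a $4$-NZF, where much stronger statements are known (cf.\ Theorem~\ref{thm:4-flow-2d-bound} and Corollary~\ref{Cor:3-4-flow}).

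The paper's proof shows where the real difficulty sits. From Seymour's $6$-flow theorem one gets (Lemmas~\ref{thm:234-flow} and~\ref{thm:6-flow-points}) an $\Omega$-flow with
\[
\Omega=\{P_1,\,P_2,\,P_1+P_2,\,P_3,\,P_1\pm P_3,\,P_2\pm P_3,\,P_1+P_2\pm P_3\},
\]
which differs from your $S$ in two essential ways: the combination $P_1-P_2$ never occurs (the two directed circuits carrying the $3$-NZF are consistently oriented on their common edges), but the ``triple'' values $P_1+P_2\pm P_3$ \emph{must} be allowed, precisely because an edge can lie in both circuits of the $3$-flow part and in the even subgraph of the $2$-flow part. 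Your norm computations never confront these triple sums --- with your near-orthonormal choices they would have $p$-norm about $3^{1/p}$, breaking all three bounds --- and the content of the paper's proof is exactly the choice of $P_1,P_2,P_3$ (Table~\ref{tab:three_schemes}) that keeps $\|P_1+P_2\pm P_3\|_p$ inside the annulus, e.g.\ the convexity estimate $4^p-3^p\ge 2^p-1$ in part (1) and the vector $\bigl(-\tfrac12,0,\tfrac12\bigr),\bigl(\tfrac18,-\tfrac12,-\tfrac38\bigr),\bigl(\tfrac38,-\tfrac14,\tfrac38\bigr)$ construction achieving $\max$-norm $\tfrac54$ in part (3). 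To repair your argument you would have to work with this larger $\Omega$ and redo the optimization of the vectors, which is essentially the paper's proof.
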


Note that the upper bound $1+2^{1/p}$ in Theorem~\ref{thm:general-2-3-bounds}(2) is at least $1+\sqrt{2}$ for all $p\in[1,2)$, and is therefore somewhat loose. Indeed, we expect $\phi_{3,p}(G)$ to be close to $2$.
To state improved bounds, define
\[
g_1(p)=\frac54\cdot2^{\,1-1/p}, \qquad
g_2(p)=\Bigl[(2^{-1/p}+3^{-1/p})^p+(2^{-1/p}-3^{-1/p})^p+\tfrac13\Bigr]^{1/p},
\qquad
g_3(p)=2^{1/p}.
\]

\begin{theorem}\label{thm:3-dim-p-12-flow}
Let $G$ be a bridgeless graph. For any $p\in[1,\infty]$,
\[
\phi_{3,p}(G)\le 1+\min\{g_1(p),\,g_2(p),\,g_3(p)\}\le 1+\sqrt2.
\]
\end{theorem}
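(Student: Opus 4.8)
The plan is to establish the three inequalities $\phi_{3,p}(G)\le 1+g_i(p)$ ($i=1,2,3$) one at a time and then combine them with the elementary fact that $\min\{g_1(p),g_2(p),g_3(p)\}\le\sqrt2$ for all $p\in[1,\infty]$. The third of these is already available: since $g_3(p)=2^{1/p}$, the inequality $\phi_{3,p}(G)\le 1+g_3(p)$ is exactly \Cref{thm:general-2-3-bounds}(2). At $p=1$ the first inequality reads $\phi_{3,1}(G)\le 1+g_1(1)=\tfrac{9}{4}$, so the $g_1$-bound should be proved by generalizing the construction behind \Cref{thm:general-2-3-bounds}(3) from $p=1$ to all $p$, with the scaling constants now calibrated so that the maximal edge norm becomes $g_1(p)=\tfrac{5}{4}\cdot 2^{1-1/p}$. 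The $g_2$-bound is genuinely new; it is described below. Since all three bounds are witnessed by explicit vector flows, the only analytic input is the one-variable inequality at the end.

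For the $g_2$-bound I would take as combinatorial input Seymour's $6$-flow theorem \cite{seymour1981}, which supplies a nowhere-zero $\mathbb{Z}_2\times\mathbb{Z}_3$-flow $(\alpha,\beta)$ of $G$; write $C=\operatorname{supp}(\alpha)$, an even subgraph, and recall $\beta$ is a $\mathbb{Z}_3$-flow nonzero on every edge outside $C$. The key point is to represent $\beta$ vectorially \emph{without ever producing a weight-$2$ step in a single coordinate}: a $\mathbb{Z}_3$-flow lifts to a conservative $\mathbb{Z}^2$-valued flow taking values only in $\{\mathbf 0,\pm\mathbf e_1,\pm\mathbf e_2,\pm(\mathbf e_1-\mathbf e_2)\}$ (the origin together with the six shortest vectors of the hexagonal lattice, via $\mathbb{Z}^2\to\mathbb{Z}_3$, $(x,y)\mapsto x-y$), nonzero exactly where $\beta$ is. Applying a fixed linear map turns this hexagon into six unit vectors $\pm\mathbf h_1,\pm\mathbf h_2,\pm\mathbf h_3$ of $\mathbb{R}^3$ with $\|\mathbf h_i\|_p=1$ and $\mathbf h_1+\mathbf h_2+\mathbf h_3=\mathbf 0$, together with a seventh unit vector $\mathbf a$; concretely one may take $\mathbf a=(2^{-1/p},2^{-1/p},0)$ and $\mathbf h_1=(3^{-1/p},-3^{-1/p},3^{-1/p})$, so that $\|\mathbf a\|_p=\|\mathbf h_1\|_p=1$, with $\mathbf h_2,\mathbf h_3$ chosen of unit $p$-norm and summing to $-\mathbf h_1$. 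One then sets $f(e)=\pm\mathbf a$ on edges of $C\setminus\operatorname{supp}(\beta)$, $f(e)=\pm\mathbf h_i$ on edges of $\operatorname{supp}(\beta)\setminus C$, and $f(e)=\pm\mathbf a\pm\mathbf h_i$ on $C\cap\operatorname{supp}(\beta)$, the signs and index dictated by $\alpha$ and $\beta$. As a sum of scalar flows times fixed vectors, $f$ is a vector flow; it has $\|f(e)\|_p=1$ on the first two kinds of edges and $\|f(e)\|_p\le\max_i\|\mathbf a\pm\mathbf h_i\|_p$ on the third, which for the choice above equals $g_2(p)=\|(2^{-1/p}+3^{-1/p},\,2^{-1/p}-3^{-1/p},\,3^{-1/p})\|_p$. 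This yields a $(1+g_2(p),3,p)$-NZF.

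For the closing inequality: when $p\ge 2$ we have $g_3(p)=2^{1/p}\le 2^{1/2}=\sqrt2$; and when $p\in[1,2]$ we have $g_2(p)\le\sqrt2$, since $g_2(2)=\bigl[(2^{-1/2}+3^{-1/2})^2+(2^{-1/2}-3^{-1/2})^2+\tfrac{1}{3}\bigr]^{1/2}=\bigl[\tfrac{5}{3}+\tfrac{1}{3}\bigr]^{1/2}=\sqrt2$ and a short calculation (for instance, that $g_2(p)^p\le 2^{p/2}$ on $[1,2]$, with equality at $p=2$) shows $g_2$ does not exceed $\sqrt2$ on that interval. Hence $\min\{g_1,g_2,g_3\}(p)\le\sqrt2$ throughout $[1,\infty]$; $g_1$ is kept only because it gives the best of the three bounds for $p$ near $1$ (indeed $g_1(1)=\tfrac{5}{4}<\tfrac{4}{3}=g_2(1)<2=g_3(1)$).

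The main obstacle is the $g_2$-construction, where two points need care. First, the hexagonal-lattice lift of the $\mathbb{Z}_3$-flow must genuinely exist as a conservative $\mathbb{Z}^2$-valued flow using only those seven values and with the prescribed support — i.e.\ the per-edge choice among the three lifts of each nonzero $\mathbb{Z}_3$-value can be made consistently; this is where the structure theory of $\mathbb{Z}_3$-flows enters. Second, and more delicately, one must choose $\mathbf h_2,\mathbf h_3$ (and verify) so that \emph{all} six vectors $\mathbf a\pm\mathbf h_i$ have $p$-norm in $[1,g_2(p)]$ \emph{simultaneously for every} $p\in[1,\infty]$; since the $p$-norm of a fixed vector varies with $p$, the identity of the extremal edge vector can shift as $p$ ranges over the interval, so the verification is not a single calculation but a family of them. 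The $g_1$-bound requires the analogous (and, at $p=1$, already established) verification for its packing.
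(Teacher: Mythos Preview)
Your overall architecture matches the paper's: reduce to the three bounds $\phi_{3,p}(G)\le 1+g_i(p)$, cite \Cref{thm:general-2-3-bounds}(2) for $g_3$, and finish by showing $g_3(p)\le\sqrt2$ for $p\ge2$ and $g_2(p)\le\sqrt2$ for $p\in[1,2]$. The combinatorial input (Seymour's decomposition, with the $3$-flow piece realized by three vectors summing to zero and the $2$-flow piece by a single vector) is also the same as the paper's \Cref{thm:6-flow-points}. Two concrete issues separate your proposal from a complete proof.

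First, in the $g_2$-construction your assignment is backwards. You put the two-nonzero-coordinate vector $\mathbf a=(2^{-1/p},2^{-1/p},0)$ on the $2$-flow and the three-nonzero-coordinate vector $\mathbf h_1=(3^{-1/p},-3^{-1/p},3^{-1/p})$ on the $3$-flow, and then have to \emph{find} $\mathbf h_2,\mathbf h_3$ of unit $p$-norm with $\mathbf h_1+\mathbf h_2+\mathbf h_3=0$ and $\|\mathbf a\pm\mathbf h_i\|_p\le g_2(p)$ uniformly in $p$; you flag this as the main obstacle, and indeed no choice of the form $(\pm 3^{-1/p},\pm 3^{-1/p},\pm 3^{-1/p})$ can make three such vectors sum to zero. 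The paper swaps the roles: it assigns the single three-coordinate vector $P_3=(b,-b,-b)$ (with $b=3^{-1/p}$) to the $2$-flow and the three two-coordinate vectors $P_1=(a,a,0)$, $P_2=(-a,0,-a)$, $P_1+P_2=(0,a,-a)$ (with $a=2^{-1/p}$) to the $3$-flow. In $\mathbb{R}^3$ there are exactly three such two-coordinate patterns, and they automatically satisfy the required linear relation; moreover, by symmetry every mixed vector $P_i\pm P_3$ has $p$-norm \emph{exactly} $g_2(p)$, so the uniform verification you worry about becomes a single computation. This swap is the missing idea.

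Second, the inequality $g_2(p)\le\sqrt2$ on $[1,2]$ is not ``a short calculation.'' The paper spends most of the proof of \Cref{thm:3-dim-p-12-flow} on it: on $[1.6,2]$ it argues via an integral comparison that $(x^p+y^p)\le(x^2+y^2)$ for $x=2^{-1/p}+3^{-1/p}$, $y=2^{-1/p}-3^{-1/p}$, reducing to a one-variable inequality that is then handled by a convexity trick; on $[1,1.6]$ it resorts to monotonicity of two auxiliary functions together with explicit numerical evaluation on five subintervals. Your suggestion ``$g_2(p)^p\le 2^{p/2}$ with equality at $p=2$'' is the right target but does not by itself give a proof. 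Finally, for $g_1$ the paper does not redo the $p=1$ construction with recalibrated constants; it simply applies the norm-comparison inequality of \Cref{lem:norm-comparison} to the bound $\phi_{3,1}(G)\le\tfrac94$ from \Cref{thm:general-2-3-bounds}(3), which is quicker than what you sketch.
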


Further discussion and numerical comparisons among $g_1$, $g_2$, and $g_3$ are deferred to the final section. In particular, Theorem \ref{thm:3-dim-p-12-flow} provides an upper bound toward Jain's $S^2$-Flow Conjecture by showing that $\phi_{3,2}(G) \le 1 + \sqrt{2}$ for every bridgeless graph $G$.

When $G$ admits a $4$-NZF, we establish a sharp bound.
A graph $H$ is \emph{contractible} to a graph $G$ if $G$ can be obtained from $H$
by repeatedly identifying vertices and deleting resulting loops.

\begin{theorem}\label{thm:4-flow-2d-bound}
Let $G$ be a graph admitting a $4$-NZF. Then
\[
\phi_{2,p}(G)\le
\begin{cases}
1+2^{\,1-1/p}, & 1\le p\le2,\\[1mm]
1+2^{\,1/p}, & p\in[2,\infty].
\end{cases}
\]
In particular, $\phi_{2,1}(G)=\phi_{2,\infty}(G)=2$.
The bound is attained for every $p$ by any graph admitting a $4$-NZF and contractible to $K_4$.
\end{theorem}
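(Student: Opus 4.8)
## Proof plan for Theorem~\ref{thm:4-flow-2d-bound}

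\textbf{Reduction to $K_4$.} The plan is to exploit the standard structural fact that a graph admits a $4$-NZF if and only if it has a nowhere-zero $\mathbb{Z}_2\times\mathbb{Z}_2$-flow, equivalently its edge set decomposes into (the supports of) two even subgraphs; a cleaner route for constructing a prescribed $p$-normed flow is to use that such a $G$ is, up to suppressing degree-$2$ vertices and deleting parallel classes, contractible from a graph that maps onto $K_4$. More precisely, I would first prove the bound for $G=K_4$ by an explicit assignment, then show the flow index is monotone under the operations that relate $G$ to $K_4$: if $(D,f)$ is an $(r,2,p)$-NZF of $H$ and $H$ is contractible to $G$, the flow descends to $G$ (identifying vertices preserves the conservation law, deleting loops is harmless), so $\phi_{2,p}(G)\le\phi_{2,p}(H)$; and conversely a $4$-NZF of $G$ pulls back along the contraction. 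The crux is therefore the single explicit construction on $K_4$, together with checking that every $4$-NZF graph can be handled by pulling back a $K_4$-flow along a suitable homomorphism/contraction structure — this last point is where one must be careful, since not every $4$-flow graph is literally contractible \emph{to} $K_4$; instead one uses that $K_4$ is the generic ``$4$-flow-critical'' target and argues edge by edge via the two-even-subgraph decomposition.

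\textbf{The explicit $K_4$ construction.} Label $V(K_4)=\{1,2,3,4\}$ and orient and weight the six edges so that the two even subgraphs of the $\mathbb{Z}_2\times\mathbb{Z}_2$-decomposition become two families of parallel $2$-dimensional vectors. Concretely, I would assign to the three edges of one Hamilton-type even subgraph a common ``horizontal'' contribution and to the other even subgraph a common ``vertical'' contribution, so that on each edge the flow value is one of $\pm\bs{a}$, $\pm\bs{b}$, or $\pm\bs{a}\pm\bs{b}$ for two fixed vectors $\bs{a},\bs{b}\in\mathbb{R}^2$; conservation at each vertex is then inherited from the fact that each even subgraph is a cycle. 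Choosing $\bs{a}=(s,0)$ and $\bs{b}=(0,t)$ forces the nonzero $p$-norms that appear to be $s$, $t$, and $(s^p+t^p)^{1/p}$ (for $p<\infty$; $\max\{s,t\}$ for $p=\infty$), and for the edges carrying a single generator one instead wants the value $\bs{a}$ alone or $\bs{a}-\bs{b}$ etc. After normalizing the smallest attained norm to $1$ and the largest to $r-1$, optimizing over $s,t$ gives exactly $r-1=2^{1-1/p}$ when $1\le p\le 2$ (the regime where $2^{1-1/p}\le 2^{1/p}$, so the combined-generator edges are the binding constraint) and $r-1=2^{1/p}$ when $p\ge 2$. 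I would present the two weight choices, verify $1\le\|f(e)\|_p\le r-1$ on all six edges in each regime, and record that at $p=1$ and $p=\infty$ both expressions equal $1$, giving $\phi_{2,p}(K_4)\le 2$; since $K_4$ is not $3$-edge-colorable-free of the relevant obstruction, $\phi_{2,p}\ge 2$ always (any NZF needs $r\ge 2$ by definition), so equality holds there.

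\textbf{Sharpness.} For the lower bound matching the claimed value when $2\le p\le\infty$ (and the full statement that the bound is \emph{attained}), I would show $\phi_{2,p}(G)\ge 1+2^{1-1/p}$ for $1\le p\le 2$ and $\ge 1+2^{1/p}$ for $p\ge 2$ whenever $G$ is contractible to $K_4$, by reducing to $G=K_4$: a contraction cannot decrease the flow index among graphs with a $4$-NZF because any $(r,2,p)$-NZF of $G$ lifts to one of $H$ of the same $r$ (split each identified vertex back, routing the flow along the contracted edges, which are bridges carrying forced values), so $\phi_{2,p}(H)\le\phi_{2,p}(G)$; hence it suffices to prove $\phi_{2,p}(K_4)\ge 1+2^{1-1/p}$ (resp.\ $1+2^{1/p}$). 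For this I would argue that in \emph{any} $2$-dimensional flow on $K_4$, the three edges at a fixed vertex $v$ carry vectors summing to zero, so one of them has $p$-norm at least as large as a fixed multiple of the smallest; chasing this around all four vertices and using that opposite edges of $K_4$ must also balance pins down the ratio $\max\|f(e)\|_p / \min\|f(e)\|_p$ from below by the stated constant. The main obstacle will be this last extremal computation — turning the ``triangle-of-vectors'' constraints at the four vertices of $K_4$ into the exact lower bound $2^{1-1/p}$ (resp.\ $2^{1/p}$) — since one must identify the worst-case configuration of vector directions, not just magnitudes, and the optimal configuration changes character at $p=2$; I expect to handle it by a compactness argument reducing to finitely many direction patterns, then a short Lagrange/convexity calculation in each.
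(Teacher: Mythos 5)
Your upper-bound construction contains a concrete error in the regime $1\le p<2$. The right skeleton is there: a $4$-NZF yields two even subgraphs covering $E(G)$, hence (for \emph{any} two vectors $\bs{a},\bs{b}\in\mathbb{R}^2$) a flow with values in $\{\pm\bs{a},\pm\bs{b},\pm(\bs{a}+\bs{b}),\pm(\bs{a}-\bs{b})\}$ --- this is exactly Lemma~\ref{thm:234-flow}(3) combined with Lemma~\ref{lem:Omega-flow-phi}, and it works directly on $G$; no reduction of $G$ to $K_4$ is needed for the upper bound, so your worry about graphs not literally contractible to $K_4$ is moot there. However, with your specific choice $\bs{a}=(s,0)$, $\bs{b}=(0,t)$ the attainable norms are $s$, $t$ and $(s^p+t^p)^{1/p}$; normalizing $\min\{s,t\}\ge1$ forces $\|\bs{a}\pm\bs{b}\|_p\ge 2^{1/p}>2^{1-1/p}$ when $p<2$, so ``optimizing over $s,t$'' yields only $1+2^{1/p}$, not the claimed $1+2^{1-1/p}$. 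Because the $p$-norm is not rotation-invariant, for $p\le2$ the generators must be taken along the diagonals: the paper uses $P_1=(a,a)$, $P_2=(a,-a)$ with $a=2^{-1/p}$, giving $\|P_i\|_p=1$ and $\|P_1\pm P_2\|_p=2a=2^{1-1/p}$. With that correction the upper bound goes through; your axis-aligned choice is fine for $p\ge2$.

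The sharpness part has a genuine gap. Your reduction to $K_4$ is fine in substance (an $(r,2,p)$-NZF of a graph contractible to $K_4$ descends under vertex identification and loop deletion, so $\phi_{2,p}(K_4)\le\phi_{2,p}(G')$), though your parenthetical justification --- ``lifting'' a flow from the contracted graph by splitting vertices and routing along contracted edges as ``bridges carrying forced values'' --- argues the wrong direction and would not work, since forced values can be zero or fall outside the annulus; the descent argument you stated earlier is the correct one. The real problem is that the crux, $\phi_{2,p}(K_4)\ge 1+2^{1-1/p}$ for $p\le2$ and $\ge 1+2^{1/p}$ for $p\ge2$, is left as a hoped-for ``compactness plus Lagrange'' computation. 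That is precisely the hard extremal content: even the single case $p=2$, namely $\phi_{2,2}(K_4)=1+\sqrt2$, is a published theorem \cite{mattiolo2024lower}, which the paper quotes rather than reproves. The paper then avoids any per-$p$ extremal analysis by transferring this one lower bound to all $p$ via the norm-comparison inequality of Lemma~\ref{lem:norm-comparison}, $d^{-(\frac{1}{p_1}-\frac{1}{p_2})}(\phi_{d,p_2}(G)-1)\le\phi_{d,p_1}(G)-1\le d^{\frac{1}{p_1}-\frac{1}{p_2}}(\phi_{d,p_2}(G)-1)$ with $d=2$ and one of $p_1,p_2$ equal to $2$, which yields exactly the lower bounds matching the corrected upper bounds. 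Without either that transfer mechanism (plus the citation) or a genuinely worked-out extremal argument for $K_4$ at every $p$, your sharpness claim remains unproven.
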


\subsection{Main results: connections with cycle covers}

Vector flows are closely related to cycle covers, extending the well-known
connections between integer (or circular) flows and cycle decompositions.
We begin by recalling the necessary definitions.

A \emph{circuit} of a graph $G$ is a connected $2$-regular subgraph.
A \emph{cycle} is a subgraph of $G$ in which every vertex has even degree.
Let $\mathcal{C}$ be a family of cycles in $G$.
We say that $\mathcal{C}$ is a \emph{cycle cover} of $G$ if every edge of $G$
is contained in at least one member of $\mathcal{C}$.
Moreover, $\mathcal{C}$ is called an \emph{$s$-cycle $t$-cover} if it consists of
$s$ cycles and each edge of $G$ is contained in exactly $t$ members of $\mathcal{C}$.
When $t=2$, $\mathcal{C}$ is referred to as a \emph{double cover}.

Bermond, Jackson, and Jaeger~\cite{bermond} proved that every bridgeless graph admits
a $7$-cycle $4$-cover, which implies that $\phi_{d,p}(G)=2$ for every integer $d\ge7$
and every $p\in[1,\infty]$.
The well-known $5$-Cycle Double Cover Conjecture of Celmins and Preissmann would
imply $\phi_{d,p}(G)=2$ for every integer $d\ge5$.

A stronger notion is that of an \emph{oriented cycle cover}.
A cycle in a directed graph is said to be \emph{directed} if every vertex has equal
indegree and outdegree.
A family $\mathcal{D}$ of $k$ directed cycles is called an
\emph{oriented $k$-cycle $2l$-cover} of a graph $G$ if each edge of $G$ is covered
exactly $2l$ times by $\mathcal{D}$, with $l$ occurrences in each direction.
For brevity, such a cover is denoted a \emph{$(k,2l)$-OCC}.
When $l=1$, the cover is also called an \emph{oriented cycle double cover}.

The Oriented Berge--Fulkerson Conjecture, proposed by Ulyanov~\cite{Ulyanov-url, Ulyanov2025},
asserts that every bridgeless graph admits a $(6,4)$-OCC.
The Oriented $5$-Cycle Double Cover Conjecture, proposed by Archdeacon and
Jaeger~\cite{archdeacon1984face, Jaeger1988}, asserts that every bridgeless graph
admits a $(5,2)$-OCC.
Mattiolo et~al.~\cite{mattiolo2023d, mattiolo2025geometric} showed that
$\phi_{2,2}(G)\le \frac{3+\sqrt5}{2}$ when $G$ admits a $(5,2)$-OCC, and that
$\phi_{k,2}(G)=2$ whenever $G$ admits an oriented $(k+1)$-cycle double cover.

In this paper, we extend these results to general $p$-norms.
Our main result in this direction is the following.

\begin{theorem}\label{thm:intro-k-OCDC-phi}
If a graph $G$ admits a $(k,2l)$-OCC, then we have
\begin{enumerate}[label=(\arabic*)]
\item $\phi_{d,p}(G)=2$ for every $p\in[1,\infty]$, where $d=k-1$;
\item if $l=1$, then $\phi_{d,1}(G)=2$, where $d=\lceil k/2\rceil$.
\end{enumerate}
\end{theorem}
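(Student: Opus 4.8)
The plan is to convert the given $(k,2l)$-OCC into an explicit vector flow by the classical device of assigning one vector to each cycle. Fix a reference orientation $D$ of $G$, let $C_1,\dots,C_k$ be the directed cycles of the cover, and for each $i$ let $g_i\colon E(G)\to\{-1,0,1\}$ be the signed indicator of $C_i$ relative to $D$ (value $1$ if $e\in C_i$ with $C_i$ traversing $e$ along $D$, value $-1$ if against $D$, value $0$ if $e\notin C_i$). Each $g_i$ is a circulation because $C_i$ is a directed cycle, so for any vectors $w_1,\dots,w_k\in\mathbb{R}^d$ the map $f=\sum_{i=1}^k g_i w_i$ is a vector $\mathbb{R}^d$-flow of $G$. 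Since each edge $e$ lies in exactly $2l$ of the $C_i$, with $l$ occurrences in each direction, we have $f(e)=\sum_{i\in S^+}w_i-\sum_{j\in S^-}w_j$ for disjoint $l$-subsets $S^+,S^-$ of $\{1,\dots,k\}$ determined by $e$. Hence it suffices to choose the $w_i$ so that $\bigl\|\sum_{i\in S^+}w_i-\sum_{j\in S^-}w_j\bigr\|_p$ is a fixed nonzero constant $c$ over all such pairs: rescaling $f$ by $1/c$ then gives a $(2,d,p)$-NZF, and since $\phi_{d,p}(G)\ge 2$ trivially and $\phi_{d,p}$ is a minimum, this yields $\phi_{d,p}(G)=2$.

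For part (1) I would take $d=k-1$, put $w_i=\mathbf{e}_i$ for $1\le i\le k-1$ (standard basis vectors of $\mathbb{R}^{k-1}$) and $w_k=c\,\mathbf 1$ for a scalar $c\ge 0$ to be fixed (note $k\ge 2l\ge 2$ since each edge lies in $2l$ distinct cycles). Edges split into three types according to whether $k\in S^+$, $k\in S^-$, or $k\notin S^+\cup S^-$, and expanding $f(e)$ in coordinates shows $\|f(e)\|_p^p=2l$ in the last case and $\|f(e)\|_p^p=(l-1)|c+1|^p+l|c-1|^p+(k-2l)|c|^p=:h(c)$ in the first two (these agree because $|c+1|=|{-c-1}|$). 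If $k=2l$ the third type does not occur and $h$ is constant, so $c=0$ works with common value $2l-1$; if $k>2l$, then $h(0)=2l-1<2l$ while $h(c)\to\infty$, so the intermediate value theorem provides $c\in(0,\infty)$ with $h(c)=2l$. In every case $f$ is nowhere zero (a type-three edge has $2l\ge 2$ entries $\pm1$; an edge of the other two types has $k-2l$ entries $\pm c$, and if $c=0$ it still has $2l-1\ge1$ entries $\pm1$), so rescaling finishes the argument. The case $p=\infty$ is covered by the same computation taken with $c=0$, which makes $\|f(e)\|_\infty=1$ for every edge.

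For part (2), with $l=1$ the cover is an oriented cycle double cover, each edge lies in exactly two cycles $C_i,C_j$, and $f(e)=\pm(w_i-w_j)$. I would set $d=\lceil k/2\rceil$, so $2d\ge k$, and let $w_1,\dots,w_k$ be $k$ distinct members of $\{\pm\mathbf{e}_1,\dots,\pm\mathbf{e}_d\}\subseteq\mathbb{R}^d$. Any two distinct vectors of this set are at $\|\cdot\|_1$-distance exactly $2$ (the subcases $\mathbf{e}_a-\mathbf{e}_b$ with $a\ne b$, $\mathbf{e}_a-(-\mathbf{e}_a)$, and $\mathbf{e}_a-(-\mathbf{e}_b)$ all yield norm $2$), so $\|f(e)\|_1=2\ne0$ for every edge; rescaling by $1/2$ gives a $(2,d,1)$-NZF, hence $\phi_{\lceil k/2\rceil,1}(G)=2$.

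I do not expect a genuine obstacle: the only non-mechanical steps are the two choices of vector configuration — the perturbed simplex $\{\mathbf{e}_1,\dots,\mathbf{e}_{k-1},c\,\mathbf 1\}$, whose point is that it can be tuned to an arbitrary $p$-norm by the intermediate value theorem, and the size-$2d$ equilateral set $\{\pm\mathbf{e}_m\}$ of $\ell_1^d$, whose largeness in the $1$-norm is exactly what pushes the required dimension down to $\lceil k/2\rceil$ in part (2). Everything else is a routine coordinate computation together with the verification that $f$ never vanishes.
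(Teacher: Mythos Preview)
Your proof is correct. Part~(2) is essentially the paper's: both exploit that $\{\pm\mathbf{e}_1,\dots,\pm\mathbf{e}_d\}$ is an $\ell_1$-equilateral set of size $2d\ge k$ (the paper just pre-scales by $\tfrac12$), so that all pairwise differences $w_i-w_j$ have the same $1$-norm.

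Part~(1) takes a genuinely different route. The paper assigns to cycle $C_i$ the centered-simplex vertex $v_i=\mathbf{e}_i-\tfrac{1}{k}\mathbf{1}\in\mathbb{R}^k$, so that every $\sum_{I}v_i-\sum_{J}v_j$ with $|I|=|J|=l$ has exactly $l$ coordinates $+1$, $l$ coordinates $-1$, and the rest $0$, hence constant $p$-norm for all $p$ simultaneously; it then observes that all flow values lie in the hyperplane $H=\{\sum x_i=0\}\cong\mathbb{R}^{k-1}$. Your construction instead works directly in $\mathbb{R}^{k-1}$ with the perturbed configuration $\{\mathbf{e}_1,\dots,\mathbf{e}_{k-1},c\mathbf{1}\}$ and tunes $c$ via the intermediate value theorem to equalize the two possible edge norms $h(c)$ and $2l$. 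The paper's choice is more symmetric and yields a single explicit configuration for all $p$ at once; your approach, while needing a separate $c=c(p)$, sidesteps a delicate point in the paper's passage from $\mathbb{R}^k$ to $\mathbb{R}^{k-1}$: the norm that $H$ inherits from $(\mathbb{R}^k,\|\cdot\|_p)$ is not in general linearly isometric to $(\mathbb{R}^{k-1},\|\cdot\|_p)$ (for instance at $p=1$, $k=3$, the induced unit ball on $H$ is a hexagon, not a square), so your direct construction is cleaner precisely where the paper's argument needs additional care.
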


Theorem~\ref{thm:intro-k-OCDC-phi} immediately yields the following corollary.

\begin{corollary}\label{cor}
Let $G$ be a bridgeless graph.
\begin{enumerate}[label=(\arabic*)]
\item If the Oriented $5$-Cycle Double Cover Conjecture holds, then
$\phi_{4,p}(G)=2$ for every $p\in[1,\infty]$ and $\phi_{3,1}(G)=2$.
\item If the Oriented Berge--Fulkerson Conjecture holds, then
$\phi_{5,p}(G)=2$ for every $p\in[1,\infty]$.
\end{enumerate}
\end{corollary}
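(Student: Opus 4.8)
The plan is to obtain both statements as immediate specializations of Theorem~\ref{thm:intro-k-OCDC-phi}, matching the parameters $(k,l)$ of a $(k,2l)$-OCC to the structure guaranteed by each conjecture; no new combinatorial or geometric input is needed beyond unwinding the definitions. Throughout I will use the trivial lower bound $\phi_{d,p}(G)\ge 2$, which holds for every bridgeless $G$ and every $d,p$ because Definition~\ref{def:annular-flow} only permits $(r,d,p)$-NZFs with $r\ge 2$; in fact this is already subsumed by Theorem~\ref{thm:intro-k-OCDC-phi}, which asserts an equality, so the deduction amounts to one line in each case.

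For part~(1), I would assume the Oriented $5$-Cycle Double Cover Conjecture, so that every bridgeless graph $G$ admits a $(5,2)$-OCC, which is a $(k,2l)$-OCC with $k=5$ and $l=1$. Applying Theorem~\ref{thm:intro-k-OCDC-phi}(1) with $d=k-1=4$ gives $\phi_{4,p}(G)=2$ for every $p\in[1,\infty]$. Since here $l=1$, Theorem~\ref{thm:intro-k-OCDC-phi}(2) also applies, with $d=\lceil k/2\rceil=\lceil 5/2\rceil=3$, and yields $\phi_{3,1}(G)=2$. If one wished, one could additionally record that $\phi_{d,p}(G)=2$ for all $d\ge 4$ and all $p$, using the monotonicity $\phi_{d+1,p}(G)\le\phi_{d,p}(G)$ together with the lower bound, but this is not part of the statement.

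For part~(2), I would assume the Oriented Berge--Fulkerson Conjecture, so that every bridgeless graph $G$ admits a $(6,4)$-OCC, which is a $(k,2l)$-OCC with $k=6$ and $l=2$. Applying Theorem~\ref{thm:intro-k-OCDC-phi}(1) with $d=k-1=5$ gives $\phi_{5,p}(G)=2$ for every $p\in[1,\infty]$; part~(2) of the theorem contributes nothing here since $l=2\ne 1$. There is no genuine obstacle at this level: all the difficulty sits upstream in Theorem~\ref{thm:intro-k-OCDC-phi}, and once that is in hand the corollary is purely a matter of substituting $k=5,\ l=1$ and $k=6,\ l=2$ and reading off the values of $d$.
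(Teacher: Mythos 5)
Your proposal is correct and matches the paper exactly: the paper states that Corollary~\ref{cor} follows immediately from Theorem~\ref{thm:intro-k-OCDC-phi}, and your substitutions ($k=5$, $l=1$ giving $d=4$ and $d=\lceil 5/2\rceil=3$; $k=6$, $l=2$ giving $d=5$) are precisely the intended specializations.
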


For $k=3,4$, a graph admits an oriented $k$-cycle double cover if and only if it
admits a $k$-NZF (see~\cite{zhang1997integer}).
As a consequence, Theorem~\ref{thm:intro-k-OCDC-phi} implies the following.

\begin{corollary}\label{Cor:3-4-flow}
Let $G$ be a bridgeless graph.
\begin{enumerate}[label=(\arabic*)]
\item If $G$ admits a $3$-NZF, then $\phi_{2,p}(G)=2$ for every $p\in[1,\infty]$.
\item If $G$ admits a $4$-NZF, then $\phi_{3,p}(G)=2$ for every $p\in[1,\infty]$.
\end{enumerate}
\end{corollary}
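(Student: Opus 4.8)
The plan is to derive both parts directly from Theorem~\ref{thm:intro-k-OCDC-phi}(1), using the classical fact that for $k\in\{3,4\}$ a graph admits a $k$-NZF if and only if it admits an oriented $k$-cycle double cover; in the terminology of this paper, an oriented $k$-cycle double cover is precisely a $(k,2)$-OCC. Only the forward implication of this equivalence is needed, and it can be found in~\cite{zhang1997integer}: if $G$ has a $3$-NZF it has a $(3,2)$-OCC, and if $G$ has a $4$-NZF it has a $(4,2)$-OCC.

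For part~(1) I would then apply Theorem~\ref{thm:intro-k-OCDC-phi}(1) with $k=3$: the $(3,2)$-OCC just obtained yields $\phi_{d,p}(G)=2$ for every $p\in[1,\infty]$ with $d=k-1=2$, that is, $\phi_{2,p}(G)=2$. For part~(2) I would apply Theorem~\ref{thm:intro-k-OCDC-phi}(1) with $k=4$: the $(4,2)$-OCC yields $\phi_{d,p}(G)=2$ for every $p\in[1,\infty]$ with $d=k-1=3$, that is, $\phi_{3,p}(G)=2$.

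There is essentially no obstacle beyond Theorem~\ref{thm:intro-k-OCDC-phi} itself; the corollary is a bookkeeping step matching indices ($k=3\leftrightarrow d=2$ and $k=4\leftrightarrow d=3$). For completeness one may note that the inequality $\phi_{d,p}(G)\ge 2$ holds for any graph with an edge straight from Definition~\ref{def:annular-flow}, since the requirement $1\le\|f(e)\|_p\le r-1$ forces $r\ge 2$; thus the substance is the matching upper bound $\phi_{d,p}(G)\le 2$ supplied by Theorem~\ref{thm:intro-k-OCDC-phi}(1). Finally, it is worth recording that the restriction to $k=3,4$ is exactly the reach of the ``$k$-NZF $\iff$ oriented $k$-cycle double cover'' equivalence; for larger $k$ one must instead assume the existence of an OCC outright, which is the content of Corollary~\ref{cor}.
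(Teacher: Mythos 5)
Your proposal is correct and follows exactly the route the paper takes: cite the equivalence (for $k=3,4$) between admitting a $k$-NZF and admitting an oriented $k$-cycle double cover, i.e.\ a $(k,2)$-OCC, and then apply Theorem~\ref{thm:intro-k-OCDC-phi}(1) with $d=k-1$, so $k=3$ gives $\phi_{2,p}(G)=2$ and $k=4$ gives $\phi_{3,p}(G)=2$. Your added remark that $\phi_{d,p}(G)\ge 2$ is forced by the constraint $1\le\|f(e)\|_p\le r-1$ is a harmless completeness point that the paper leaves implicit.
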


After the completion of this paper, Mazzuoccolo shared with us the references
\cite{gaborik2025manhattan} and \cite{Rieg-Master}. He informed us that the
equality $\phi_{3,\infty}(G)=2$ in Theorem~\ref{thm:general-2-3-bounds}(2) and the equality
$\phi_{3,1}(G)=2$ in Corollary~\ref{cor}(1) were obtained
independently in \cite{gaborik2025manhattan}. He also pointed out that the upper
bounds in Theorem~\ref{thm:4-flow-2d-bound} (without the sharpness statement) and
the case $l=1$ of Theorem~\ref{thm:intro-k-OCDC-phi}(1) were obtained independently
in \cite{Rieg-Master}. 

The organization of the rest of the paper is as follows. In Section~2, we develop a general approach for deriving upper bounds on $\phi_{d,p}(G)$; as a direct application, this section also contains the proof of Theorem~\ref{thm:4-flow-2d-bound}.
The proofs of Theorems~\ref{thm:general-2-3-bounds} and \ref{thm:3-dim-p-12-flow} are given in Section~3, and the proof of Theorem \ref{thm:intro-k-OCDC-phi} is presented in Section 4. Further discussion, along with open problems and conjectures, will be presented in Section~5.

\section{A general method for finding upper bounds of flow indices}
In this section, we present a general method for deriving upper bounds on
$\phi_{d,p}(G)$ and, as an application, use it to prove
Theorem~\ref{thm:4-flow-2d-bound}.

Let $P_1,\dots,P_t$ be formal vectors in $\mathbb{R}^d$, viewed for the moment as symbolic variables. Let $\Omega$ be a prescribed subset of $\mathbb{R}^d$ generated by these vectors (for example, $\Omega=\{P_1,P_2,P_1+P_2\}$). 
Consider the following optimization problem:
\begin{equation}\label{eq:optimization}
\begin{aligned}
& \min_{P_1,\dots,P_t} && \max\{\|\bs{x}\|_p : \bs{x}\in\Omega\},\\
& \text{s.t.} && \|\bs{x}\|_p \ge 1 \quad \forall\,\bs{x}\in\Omega.
\end{aligned}
\end{equation}
If a graph $G$ admits an $\Omega$-flow, then any numerical assignment of
$P_1,\dots,P_t$ such that every $\bs{x}\in\Omega$ satisfies
\[
1 \le \|\bs{x}\|_p \le r-1
\]
immediately yields $\phi_{d,p}(G)\le r$. 
Therefore, for graphs admitting an $\Omega$-flow, bounding $\phi_{d,p}(G)$ reduces to finding feasible solutions to \eqref{eq:optimization} with the objective value as small as possible. 
This principle is summarized in the following lemma.

\begin{lemma}[A general method for upper bounds on $\phi_{d,p}(G)$]
\label{lem:Omega-flow-phi}
Let $d\ge 2$ and $p\in[1,\infty]$. 
Suppose that a bridgeless graph $G$ admits an $\Omega$-flow for some vector set $\Omega\subseteq\mathbb{R}^d$ generated by formal vectors $P_1,\dots,P_t$.
If there exists a numerical assignment $P_1,\dots,P_t\in\mathbb{R}^d$ such that
$
1 \le \|\bs{x}\|_p \le r-1$  {for all} $\bs{x}\in\Omega$,
then
\[
\phi_{d,p}(G)\le r.
\]
\end{lemma}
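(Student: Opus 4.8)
The plan is to show that this lemma is essentially a restatement of the definitions, the only substantive point being to pin down what it means for $\Omega$ to be ``generated by the formal vectors $P_1,\dots,P_t$'' and why a symbolic $\Omega$-flow survives numerical specialization of the $P_i$. First I would unpack the hypothesis: saying that $G$ admits an $\Omega$-flow, where $\Omega$ is generated by $P_1,\dots,P_t$, means that there is an orientation $D$ of $G$ together with an assignment, to each edge $e$, of an element of $\Omega$, i.e.\ of a fixed formal combination $\sum_{i=1}^{t} c_{e,i} P_i$ whose coefficients $c_{e,i}$ do not depend on the eventual numerical values of the $P_i$, such that the conservation law
\[
\sum_{e\in E_D^+(v)}\Bigl(\sum_{i=1}^{t} c_{e,i}P_i\Bigr)-\sum_{e\in E_D^-(v)}\Bigl(\sum_{i=1}^{t} c_{e,i}P_i\Bigr)=\bm{0}
\]
holds at every vertex $v$. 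The crucial observation is that, after collecting coefficients, this is the single linear identity $\sum_{i=1}^{t}\lambda_i(v)\,P_i=\bm{0}$ with $\lambda_i(v)=\sum_{e\in E_D^+(v)}c_{e,i}-\sum_{e\in E_D^-(v)}c_{e,i}$, and the way ``$\Omega$-flow'' is set up forces each $\lambda_i(v)$ to vanish (equivalently, for each $i$ the coefficient function $(c_{e,i})_{e}$ is an ordinary flow on $(G,D)$). Hence the conservation law continues to hold verbatim once every formal $P_i$ is replaced by an arbitrary numerical vector in $\mathbb{R}^d$.

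Next I would perform the specialization. Fix the numerical assignment $P_1,\dots,P_t\in\mathbb{R}^d$ supplied by the hypothesis. Under this substitution $\Omega$ becomes a concrete finite subset of $\mathbb{R}^d$, and the edge labelling above becomes a map $f\colon E(G)\to\Omega$; by the previous paragraph $(D,f)$ still satisfies the vertex conservation law, so it is a vector $\Omega$-flow in the sense of the definition of vector $\Omega$-flow. By assumption every $\bs{x}\in\Omega$ satisfies $1\le\|\bs{x}\|_p\le r-1$; in particular $\bs{x}\ne\bm{0}$, so $\Omega\subseteq\mathbb{R}^d\setminus\{\bm{0}\}$ and $(D,f)$ is in fact a vector $(\mathbb{R}^d\setminus\{\bm{0}\})$-flow with $1\le\|f(e)\|_p\le r-1$ for every edge $e$ (note also $r\ge 2$, since $1\le\|\bs{x}\|_p\le r-1$ is satisfiable). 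By Definition~\ref{def:annular-flow}, $(D,f)$ is an $(r,d,p)$-NZF of $G$, and therefore $\phi_{d,p}(G)\le r$.

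The argument involves no computation at all; the one place that genuinely needs care — and which I would state explicitly rather than leave implicit in the phrase ``generated by formal vectors'' — is the passage from a symbolic $\Omega$-flow to a numerical one, namely the fact that the conservation constraints are linear identities in $P_1,\dots,P_t$ and are hence preserved under any numerical specialization. Once that is granted, the remainder is simply a matter of matching the hypotheses against Definition~\ref{def:annular-flow}.
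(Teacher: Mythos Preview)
Your proposal is correct and matches the paper's approach: the paper does not give a separate proof of this lemma at all, treating it as an immediate consequence of the preceding discussion (``This principle is summarized in the following lemma''). Your write-up simply makes explicit the one point the paper leaves implicit, namely that the conservation identities are linear in the formal vectors $P_1,\dots,P_t$ and are therefore preserved under any numerical specialization, after which Definition~\ref{def:annular-flow} applies directly.
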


Accordingly, bounding $\phi_{d,p}(G)$ naturally separates into two steps:

\begin{enumerate}[label=(\arabic*)]
\item \textbf{Identify a symbolic structure for $\Omega$.}
One aims to select a set $\Omega$ that both arises naturally from the flow
constraints and is amenable to optimization.

\item \textbf{Choose concrete vectors.}
After fixing $\Omega$, one chooses numerical vectors $P_1,\dots,P_t$ such that $\|\bs{x}\|_p \ge 1$ for all $\bs{x}\in\Omega$, while making
$\max\{\|\bs{x}\|_p:\bs{x}\in\Omega\}$ as small as possible, thereby yielding an explicit
upper bound.
\end{enumerate}

For a fixed graph $G$, a convenient way to construct such a symbolic set is to choose a spanning tree $T$, assign independent symbolic variables to the edges in $E(G)\setminus E(T)$, and use flow conservation to express the value on each tree edge as a linear combination of these variables. This yields a canonical symbolic set $\Omega$ for which $G$ admits an $\Omega$-flow. 
This method can be applied in compute programming to provide the bounds $\phi_{1}(P_{10})\le 5$, $\phi_{3,2}(P_{10})=2$, and $\phi_{2,2}(P_{10})\le 1+\sqrt{7/3}$ (which is obtained in \cite{mattiolo2023d} using a different approach) for the Petersen graph $P_{10}$.

For general graph families (e.g., all bridgeless graphs), however, the
spanning-tree construction is not applicable, as it depends on the structure of a
specific graph. Instead, one seeks symbolic sets $\Omega$ that arise uniformly
across the family and can be analyzed independently of any particular instance.
Developing such universal symbolic structures is therefore essential. In this
paper, we focus on symbolic sets $\Omega$ derived uniformly from structural
properties of $k$-NZFs, enabling bounds for broad classes of graphs.

\begin{lemma}\label{thm:234-flow}
Let $G$ be a bridgeless graph and let $d \ge 2$ be an integer. 
Given any nonzero vector $P \in \mathbb{R}^d$ and any two non-collinear vectors $P_1, P_2 \in \mathbb{R}^d$, the following statements hold:
\begin{enumerate}[label=(\arabic*)]
    \item $G$ admits a $2$-NZF if and only if it admits a $\{P\}$-flow.
    \item $G$ admits a $3$-NZF if and only if it admits a $\{P_1, P_2, P_1 + P_2\}$-flow.
    \item $G$ admits a $4$-NZF if and only if it admits a $\{P_1, P_2, P_1 + P_2, P_1 - P_2\}$-flow.
\end{enumerate}
\end{lemma}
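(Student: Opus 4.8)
The plan is to reduce each of the three cases to the corresponding classical flow notion by means of two elementary observations. First, reversing the orientation of an edge $e$ in a vector flow amounts to replacing $f(e)$ by $-f(e)$; so, fixing once and for all a reference orientation $D_0$ of $G$, admitting an $\Omega$-flow is the same as admitting a flow $(D_0,g)$ with $g(e)\in\Omega\cup(-\Omega)$ for every $e\in E(G)$. Second, since non-collinear vectors $P_1,P_2$ are linearly independent (recall $d\ge 2$), every vector in $\operatorname{span}\{P_1,P_2\}$ has unique coordinates; writing $g(e)=a(e)P_1+b(e)P_2$ and projecting the conservation law at each vertex onto the independent directions $P_1,P_2$, one sees that $(D_0,g)$ is a flow if and only if $a$ and $b$ are both integer-valued flows of $G$. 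Under this correspondence the condition $g(e)\in\Omega\cup(-\Omega)$ becomes the requirement that $(a(e),b(e))$ lie in an explicit finite set: $\{(\pm1,0),(0,\pm1),\pm(1,1)\}$ in case~(2), and all of $\{-1,0,1\}^2\setminus\{(0,0)\}$ in case~(3). Case~(1) is then immediate: a $\{P\}$-flow is a flow $(D_0,g)$ with $g=\varepsilon P$ for some $\varepsilon\colon E(G)\to\{1,-1\}$, and since $P\ne\bm 0$ the conservation law for $g$ says precisely that $\varepsilon$ is a flow, i.e.\ a $2$-NZF.

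For the implication ``$\Omega$-flow $\Rightarrow$ $k$-NZF'' in cases~(2) and~(3) I would argue directly from the coordinate flows $a,b$. In case~(2) every pair $(a(e),b(e))$ has nonzero coordinate sum, so $h:=a+b$ is an integer flow with $|h(e)|\in\{1,2\}$ for all $e$, which is by definition a $3$-NZF. In case~(3) every pair $(a(e),b(e))$ is nonzero with entries in $\{-1,0,1\}$, so reducing it modulo $2$ gives a nowhere-zero $\mathbb{Z}_2\times\mathbb{Z}_2$-flow of $G$, whence $G$ has a $4$-NZF (equivalently, $\operatorname{supp}(a)$ and $\operatorname{supp}(b)$ are two even subgraphs whose union is $E(G)$, the classical characterization of graphs with a $4$-NZF; see \cite{zhang1997integer}).

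For the reverse implication ``$k$-NZF $\Rightarrow$ $\Omega$-flow'' in cases~(2) and~(3) I would route through oriented cycle double covers. By the standard equivalence (see \cite{zhang1997integer}), a $3$-NZF, respectively a $4$-NZF, of $G$ induces an oriented $k$-cycle double cover $C_1,\dots,C_k$ with $k=3$, respectively $k=4$, in which every edge lies in exactly two of the cycles, traversed in opposite directions. Given vectors $v_1,\dots,v_k\in\mathbb{R}^d$, route a circulation of value $v_i$ around $C_i$ and add the resulting flows; the value on an edge lying in $C_i$ and $C_j$ (with opposite orientations) is then $\pm(v_i-v_j)$. It remains to choose the $v_i$ so that every difference $v_i-v_j$ with $i\ne j$ lands in $\Omega\cup(-\Omega)$: for $k=3$ take $(v_1,v_2,v_3)=(P_1+P_2,\,P_2,\,\bm 0)$, and for $k=4$ take $(v_1,v_2,v_3,v_4)=(\bm 0,\,P_1,\,P_2,\,P_1+P_2)$. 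A short computation shows the pairwise differences exhaust $\Omega\cup(-\Omega)$, and each is nonzero because these $v_i$ are pairwise distinct (using that $P_1,P_2$ are non-collinear); hence the induced flow is $(\Omega\cup(-\Omega))$-valued and nowhere zero, and reversing the orientations of the edges carrying values in $-\Omega$ produces a genuine $\Omega$-flow. Chaining the implications as ``$\Omega$-flow $\Rightarrow$ $k$-NZF $\Rightarrow$ oriented $k$-cycle double cover $\Rightarrow$ $\Omega$-flow'' closes the equivalences for $k=3,4$.

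The step I expect to be the main obstacle is precisely this reverse implication, i.e.\ manufacturing an $\Omega$-flow from a $k$-NZF. A direct attempt to split the $3$-flow $h$ into a sum $a+b$ of two $\{-1,0,1\}$-valued flows runs into a feasibility question that can be resolved by Hoffman's circulation theorem but is handled more transparently by the oriented-cycle-double-cover detour above; that detour in turn relies on the classical (external) fact that a $k$-NZF induces an oriented $k$-cycle double cover for $k\in\{3,4\}$. The one genuinely new ingredient is the choice of circulation values $v_i$ --- a ``linear'' triple for $k=3$, a ``parallelogram'' quadruple for $k=4$ --- which is exactly what forces the induced flow to be nowhere zero and $(\Omega\cup(-\Omega))$-valued.
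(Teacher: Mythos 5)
Your proposal is correct, but it takes a partly different route from the paper, in both directions of the equivalences. For the implication from an $\Omega$-flow to a $k$-NZF, the paper argues structurally: using non-collinearity it shows that the edge set $E_1$ (values involving $P_1$) and the edge set $E_2$ (values involving $P_2$) each form a (directed) cycle, and reads off the $3$- or $4$-NZF from this pair; you instead write $f(e)=a(e)P_1+b(e)P_2$, project the conservation law onto the two independent directions to obtain two integer flows $a,b$, and then take $a+b$ (values $\pm1,\pm2$, hence a $3$-NZF) or reduce $(a,b)$ modulo $2$ (a nowhere-zero $\mathbb{Z}_2\times\mathbb{Z}_2$-flow, hence a $4$-NZF by Tutte's group-flow equivalence). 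Your version is clean and makes explicit exactly where non-collinearity enters. For the converse, the paper stays more elementary: for $k=3$ it invokes the Little--Tutte--Younger theorem to get two directed cycles covering each edge $f(e)$ times and simply superposes a $\{P_1\}$-flow and a $\{P_2\}$-flow (no reorientation needed), and for $k=4$ it uses the classical cover of $E(G)$ by two even subgraphs, combining the two circulations with signs $\pm$ according to orientation agreement; this is also why the paper's remark can drop the non-collinearity hypothesis in that direction. You instead pass through the equivalence between $k$-NZFs and oriented $k$-cycle double covers for $k=3,4$ (a classical fact from \cite{zhang1997integer} that the paper itself cites elsewhere, so there is no circularity) and assign potentials $v_i$ to the cycles so that all pairwise differences lie in $\Omega\cup(-\Omega)$, finishing with a reorientation of edges carrying values in $-\Omega$; this is essentially the paper's Theorem~\ref{thm:k-OCDC-points} specialized to $l=1$, with your ``linear'' triple and ``parallelogram'' quadruple playing the role of the vector assignment. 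Both routes are valid; the paper's uses lighter classical input, while yours is more uniform and anticipates the general cycle-cover machinery of Section~4.
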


\noindent\textbf{Remark.} In the implication from the existence of a $k$-NZF to the corresponding higher-dimensional flow, 
the conditions $P \neq \mathbf{0}$ and the non-collinearity of $P_1$ and $P_2$ are not strictly necessary; 
the construction remains valid for any choice of vectors in $\mathbb{R}^d$.

\begin{proof}
(1) is immediate since both conditions are equivalent to $G$ being a cycle.

\medskip
(2) Little, Tutte, and Younger \cite{little1988theorem} (see also Theorem~2.6.2 in \cite{zhang1997integer}) showed that a graph $G$ admits a $k$-NZF $(D, f)$ with $f(e) > 0$ for each edge $e$ if and only if there exist $k-1$ directed cycles $\mathcal{C} = \{C_1, \dots, C_{k-1}\}$ in the oriented graph $D(G)$ such that each edge $e$ lies in exactly $f(e)$ cycles in $\mathcal{C}$.  

Consequently, if $G$ admits a $3$-NZF, then $G$ admits a positive flow $(D, f)$ where $f(e) = 1$ or $2$.  
Under the orientation $D$, there are two directed cycles $C_1$ and $C_2$, such that each edge is covered exactly $f(e)$ times.  
Let $P_1, P_2 \in \mathbb{R}^d$. By (1), $G$ has a $\{P_i, 0\}$-flow $g_i$ for each $i = 1,2$, where $g_i(e) = P_i$ if and only if $e \in E(C_i)$.  
For each edge $e \in C_1 \cap C_2$, $g_1(e) + g_2(e) = P_1 + P_2$. Therefore, $g_1 + g_2$ is a $\{P_1, P_2, P_1 + P_2\}$-flow.  

Conversely, suppose that $G$ admits a $\{P_1, P_2, P_1 + P_2\}$-flow, where $P_1$ and $P_2$ are non-collinear.  
Let $E_1$ be the set of edges whose flow values are in $\{P_1, P_1 + P_2\}$, and $E_2$ the set of edges whose flow values are in $\{P_2, P_1 + P_2\}$.  
Since $P_1$ and $P_2$ are non-collinear, any linear combination $a P_1 + b P_2 = 0$ implies $a = b = 0$.  
Consequently, $E_1$ forms a directed cycle $C_1$ and $E_2$ forms a directed cycle $C_2$, with consistent orientations on $C_1 \cap C_2$.  
It follows that $G$ admits a $3$-NZF.

\medskip
(3) Suppose that $G$ admits a $4$-NZF, which is equivalent to the existence of two cycles $C_1$ and $C_2$ covering all edges of $G$. For each $i=1,2$,   by (1), let $(D_i,f_i)$ be a $\{P_i\}$-flow of $C_i$. 

 We construct an $\Omega$-flow $(D,f)$ of $G$ as follows.  
For each edge $e \in E(G)$, define its orientation $D(e)$ and flow value $f(e)$ as follows:
\[
D(e) =
\begin{cases}
D_1(e), & e \in E(C_1),\\[1mm]
D_2(e), & e \in E(C_2) \setminus E(C_1),
\end{cases}
\] and 
\[
f(e) =
\begin{cases}
f_1(e), & e \in E(C_1) \setminus E(C_2),\\[1mm]
f_2(e), & e \in E(C_2) \setminus E(C_1),\\[1mm]
f_1(e) + f_2(e), & e \in E(C_1) \cap E(C_2),\ D_1(e) = D_2(e),\\[1mm]
f_1(e) - f_2(e), & e \in E(C_1) \cap E(C_2),\ D_1(e) \neq D_2(e).
\end{cases}
\]

Then clearly  $(D,f)$ is  an $\Omega$-flow where $ \Omega=\{P_1, P_2, P_1 + P_2, P_1 - P_2\}$.  Thus  a $4$-NZF naturally induces a $\{P_1, P_2, P_1 + P_2, P_1 - P_2\}$-flow on $G$.

Conversely, suppose that $G$ admits a $\{P_1, P_2, P_1 + P_2, P_1 - P_2\}$-flow, with $P_1$ and $P_2$ non-collinear. Let $E_1$ be the set of edges assigned values in $\{P_1, P_1 + P_2, P_1 - P_2\}$ and $E_2$ the set assigned values in $\{P_2, P_1 + P_2, P_1 - P_2\}$. By non-collinearity of $P_1$ and $P_2$, $E_1$ forms a cycle $C_1$ and $E_2$ forms a cycle $C_2$, which together cover all edges of $G$. Therefore, $G$ admits a $4$-NZF.
\end{proof}

Seymour's $6$-flow theorem \cite{seymour1981}  can be restated as follows.
\begin{theorem}[Seymour \cite{seymour1981}]\label{ob:6-flow-decomposition}
Let $G$ be a bridgeless graph. Then there exist two subgraphs $G_1$ and $G_2$ of $G$ such that \(E(G_1) \cup E(G_2) = E(G)\), \(G_1\) admits a $3$-NZF, and \(G_2\) admits a $2$-NZF.
\end{theorem}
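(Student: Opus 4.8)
The plan is to derive this from Seymour's $6$-flow theorem in its group-flow formulation, using the ring isomorphism $\mathbb{Z}_6\cong\mathbb{Z}_2\times\mathbb{Z}_3$. Recall from Tutte's flow theory (see \cite{zhang1997integer}) that a bridgeless graph has a nowhere-zero $6$-flow if and only if it has a nowhere-zero $\mathbb{Z}_2\times\mathbb{Z}_3$-flow, which is in fact the form in which Seymour's theorem is most naturally established. So I would fix such a flow $(D,\varphi)$ of $G$ and decompose it coordinatewise as $\varphi=(\varphi_2,\varphi_3)$ with $\varphi_2\colon E(G)\to\mathbb{Z}_2$ and $\varphi_3\colon E(G)\to\mathbb{Z}_3$. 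The conservation law for $\varphi$ at each vertex holds in $\mathbb{Z}_2\times\mathbb{Z}_3$, hence in each coordinate separately, so $\varphi_2$ is a $\mathbb{Z}_2$-flow and $\varphi_3$ a $\mathbb{Z}_3$-flow of $G$.

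Next I would set $G_2$ to be the subgraph of $G$ with edge set $\{e\in E(G):\varphi_2(e)\neq 0\}$ and $G_1$ the subgraph with edge set $\{e\in E(G):\varphi_3(e)\neq 0\}$ (if one wishes both to be spanning, one simply adds the isolated vertices of $G$). Since every edge outside $E(G_i)$ carries the zero value in the relevant coordinate, it contributes nothing to any conservation equation; therefore $\varphi_2$ restricted to $G_2$ is a \emph{nowhere-zero} $\mathbb{Z}_2$-flow of $G_2$, and $\varphi_3$ restricted to $G_1$ is a nowhere-zero $\mathbb{Z}_3$-flow of $G_1$. Applying Tutte's equivalence between integer and $\mathbb{Z}_k$-flows once more, now for the single moduli $2$ and $3$, we conclude that $G_2$ admits a $2$-NZF and $G_1$ admits a $3$-NZF. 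Finally, the nowhere-zero condition on $\varphi$ says precisely that $(\varphi_2(e),\varphi_3(e))\neq(0,0)$ for every $e\in E(G)$, so each edge lies in $E(G_1)\cup E(G_2)$; hence $E(G_1)\cup E(G_2)=E(G)$.

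I do not anticipate any real obstacle beyond Seymour's theorem itself, which I take as given: once the $6$-flow is available, the argument is a routine application of the Chinese Remainder decomposition together with the integer/$\mathbb{Z}_k$-flow dictionary. The only points deserving a sentence of justification are that a group flow restricts to a group flow on its support (immediate from conservation, since deleted edges carried value $0$) and that the support of a $\mathbb{Z}_2$-flow is an even subgraph — which is exactly why the assertion ``$G_2$ admits a $2$-NZF'' is consistent with the characterization of graphs admitting a $2$-NZF as the cycles of $G$.
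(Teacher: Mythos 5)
Your derivation is correct, and it is exactly the intended argument: the paper itself gives no proof, simply citing Seymour, since the statement is the standard restatement of the $6$-flow theorem via its $\mathbb{Z}_2\times\mathbb{Z}_3$ formulation. Taking $G_1$ and $G_2$ to be the supports of the $\mathbb{Z}_3$- and $\mathbb{Z}_2$-coordinates, observing that a group flow restricts to a nowhere-zero flow on its support, and invoking Tutte's integer/$\mathbb{Z}_k$-flow equivalence is precisely the routine dictionary the authors rely on.
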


Building on these constructions, we aim to identify a suitable set of vectors $\Omega$ such that every  bridgeless graph  admits an $\Omega$-flow.  
The approach outlined in Lemma~\ref{lem:Omega-flow-phi} can then be applied to potentially derive general upper bounds for higher-dimensional flows across all bridgeless graphs.

For any two subsets $A,B \subseteq \mathbb{R}^d$, define
\[
A + B = \{x + y : x \in A,\, y \in B\} \quad\text{and}~~
A - B = \{x - y : x \in A,\, y \in B\}.
\]

\begin{lemma}\label{thm:6-flow-points}
Let $G$ be a bridgeless graph and let $d \ge 2$ be an integer.  
For any three vectors $P_1, P_2, P_3 \in \mathbb{R}^d$, define  
\[
\Omega_1 = \{P_1, P_2, P_1 + P_2\}, \quad 
\Omega_2 = \{P_3\}, \quad
\Omega_3 = (\Omega_1 + \Omega_2) \cup (\Omega_1 - \Omega_2),
\]
and let $\Omega = \Omega_1 \cup \Omega_2 \cup \Omega_3$.  
Then $G$ admits an $\Omega$-flow.
\end{lemma}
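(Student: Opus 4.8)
The plan is to derive the $\Omega$-flow by decomposing $G$ with Seymour's $6$-flow theorem and superposing the two resulting flows after passing to a common orientation; all the real work then reduces to tracking flow values across the overlap of the two pieces.

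First I would apply Theorem~\ref{ob:6-flow-decomposition} to obtain subgraphs $G_1,G_2\subseteq G$ with $E(G_1)\cup E(G_2)=E(G)$ such that $G_1$ admits a $3$-NZF and $G_2$ admits a $2$-NZF. Using the forward implication of Lemma~\ref{thm:234-flow}(2) — which, by the Remark following that lemma, holds for \emph{any} vectors $P_1,P_2\in\mathbb{R}^d$, collinear or not — this promotes the $3$-NZF to a $\{P_1,P_2,P_1+P_2\}$-flow $(D_1,f_1)$ of $G_1$, i.e.\ an $\Omega_1$-flow; likewise Lemma~\ref{thm:234-flow}(1) yields a $\{P_3\}$-flow $(D_2,f_2)$ of $G_2$ for the given $P_3$. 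Since the definition of an $\Omega$-flow imposes no nowhere-zero condition, no non-degeneracy hypothesis on $P_1,P_2,P_3$ is needed here.

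Next I would fix the orientation $D$ of $G$ that agrees with $D_1$ on $E(G_1)$ and with $D_2$ on $E(G_2)\setminus E(G_1)$; this is well defined precisely because $E(G_1)\cup E(G_2)=E(G)$. Extend $f_1$ to $E(G)$ by assigning $\bm{0}$ on $E(G)\setminus E(G_1)$, and extend $f_2$ by assigning $\bm{0}$ on $E(G)\setminus E(G_2)$ and negating its value on those edges of $E(G_1)\cap E(G_2)$ where $D$ and $D_2$ disagree (no re-signing is needed for $f_1$, since $D$ extends $D_1$, and $\bm{0}$-values are insensitive to orientation). Each extension is a flow of $G$ under $D$, hence so is $f:=f_1+f_2$. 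Reading $f$ off edgewise: $f(e)=f_1(e)\in\Omega_1$ on $E(G_1)\setminus E(G_2)$; $f(e)=P_3\in\Omega_2$ on $E(G_2)\setminus E(G_1)$; and $f(e)=\omega\pm P_3$ with $\omega\in\Omega_1$ on $E(G_1)\cap E(G_2)$, so $f(e)\in(\Omega_1+\Omega_2)\cup(\Omega_1-\Omega_2)=\Omega_3$ there. Hence $f\colon E(G)\to\Omega_1\cup\Omega_2\cup\Omega_3=\Omega$, and $(D,f)$ is the desired $\Omega$-flow.

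The argument is essentially bookkeeping, and I do not expect a genuine obstacle. The one point that needs care is the choice of the common orientation $D$: it must be arranged so that $f_1$ retains its $\Omega_1$-values while the sign changes forced on $f_2$ over the overlap produce only $\pm P_3$ — and it is precisely to absorb both $+P_3$ and $-P_3$ that $\Omega_3$ is defined using $\Omega_1+\Omega_2$ \emph{and} $\Omega_1-\Omega_2$.
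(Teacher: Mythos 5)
Your proposal is correct and follows essentially the same route as the paper: decompose $G$ via Seymour's $6$-flow theorem, convert the $3$-NZF and $2$-NZF into an $\Omega_1$-flow and a $\{P_3\}$-flow via Lemma~\ref{thm:234-flow}, and combine them under a common orientation with a sign flip for $f_2$ on overlap edges where the orientations disagree. Your ``extend by zero and add'' phrasing is just a cleaner formulation of the paper's casewise definition of $f$, so no substantive difference exists.
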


\begin{proof}
Since $G$ is bridgeless, by Theorem~\ref{ob:6-flow-decomposition}, there exist two subgraphs $G_1$ and $G_2$ of $G$ such that $G_1$ admits a $3$-NZF, $G_2$ admits a $2$-NZF, and $E(G) = E(G_1) \cup E(G_2)$.  
By Lemma~\ref{thm:234-flow} (1) and (2), $G_1$ admits an $\Omega_1$-flow $(D_1, f_1)$ and $G_2$ admits an $\Omega_2$-flow $(D_2, f_2)$.

We construct an $\Omega$-flow $(D,f)$ of $G$ as follows.  
For each edge $e \in E(G)$, define its orientation $D(e)$ and flow value $f(e)$ as follows:
\[
D(e) =
\begin{cases}
D_1(e), & e \in E(G_1),\\[1mm]
D_2(e), & e \in E(G_2) \setminus E(G_1),
\end{cases}
\] and 
\[
f(e) =
\begin{cases}
f_1(e), & e \in E(G_1) \setminus E(G_2),\\[1mm]
f_2(e), & e \in E(G_2) \setminus E(G_1),\\[1mm]
f_1(e) + f_2(e), & e \in E(G_1) \cap E(G_2),\ D_1(e) = D_2(e),\\[1mm]
f_1(e) - f_2(e), & e \in E(G_1) \cap E(G_2),\ D_1(e) \neq D_2(e).
\end{cases}
\]
By the construction, each flow value $f(e)$ lies in $\Omega$, and the flow conservation law holds at every vertex.  
Hence, $(D,f)$ is an $\Omega$-flow of $G$.
\end{proof}

As a consequence of Lemma~\ref{thm:6-flow-points} and Lemma~\ref{lem:Omega-flow-phi}, estimating $\phi_{d,p}(G)$ for bridgeless graphs reduces to the following optimization problem: choose vectors $P_1, P_2, P_3 \in \mathbb{R}^d$ to minimize the maximum $p$-norm among the vectors in
\[
\Omega = \{P_1, P_2, P_1+P_2, P_3, P_1+P_3, P_1-P_3, P_2+P_3, P_2-P_3, P_1+P_2+P_3, P_1+P_2-P_3\},
\]
subject to the constraint $\|\bs{x}\|_p \ge 1$ for all $\bs{x} \in \Omega$. In the next section, we provide general constructions for $P_1, P_2, P_3$ for various choices of dimension $d$ and norm $p$.

As an illustrative example, we first present the proof of Theorem~\ref{thm:4-flow-2d-bound} for graphs admitting $4$-NZFs.  
We begin with the following observation, which holds naturally in two-dimensional space. Although we construct $\Omega$ for both $p=1$ and $p=\infty$ in proofs of Theorems~\ref{thm:general-2-3-bounds} and \ref{thm:4-flow-2d-bound}, in two-dimensional space the case $p=\infty$ can be deduced directly from $p=1$ using the equivalence below. The following observation is also independently observed in \cite{gaborik2025manhattan}.

\begin{observation}
\label{ob:2d-1-inf-equiv}
Let $G$ be a bridgeless graph. Then $G$ admits an $(r,2,1)$-NZF if and only if it admits an $(r,2,\infty)$-NZF. Consequently,
\[
\phi_{2,1}(G) = \phi_{2,\infty}(G).
\]
\end{observation}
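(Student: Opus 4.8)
The plan is to transport flows through an explicit linear automorphism of $\mathbb{R}^2$ that carries the $\ell_1$-norm onto the $\ell_\infty$-norm exactly. I would take $L\colon\mathbb{R}^2\to\mathbb{R}^2$ defined by $L(x,y)=(x+y,\,x-y)$; this is an invertible linear map with $L(\bm{0})=\bm{0}$ and $L^{-1}(u,v)=\bigl(\tfrac{u+v}{2},\,\tfrac{u-v}{2}\bigr)$. The computation underlying everything is the elementary identity $\max\{|x+y|,|x-y|\}=|x|+|y|$, valid for all real $x,y$ (reduce to the case $xy\ge 0$, where the left side equals $|x+y|=|x|+|y|$), together with its dual $|u+v|+|u-v|=2\max\{|u|,|v|\}$. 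The first says $\|L(\bm{v})\|_\infty=\|\bm{v}\|_1$ for every $\bm{v}\in\mathbb{R}^2$, and the second says $\|L^{-1}(\bm{w})\|_1=\|\bm{w}\|_\infty$ for every $\bm{w}\in\mathbb{R}^2$.

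Granting this, I would argue as follows. Suppose $(D,f)$ is an $(r,2,1)$-NZF of $G$, and set $f'(e)=L(f(e))$ while keeping the orientation $D$. Linearity of $L$ gives, at every vertex $v$, $\sum_{e\in E_D^+(v)}f'(e)-\sum_{e\in E_D^-(v)}f'(e)=L\bigl(\sum_{e\in E_D^+(v)}f(e)-\sum_{e\in E_D^-(v)}f(e)\bigr)=L(\bm{0})=\bm{0}$, so $(D,f')$ is a vector flow. For each edge, $\|f'(e)\|_\infty=\|f(e)\|_1\in[1,r-1]$; in particular $f'(e)\ne\bm{0}$, so $(D,f')$ is an $(r,2,\infty)$-NZF. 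The converse is the same argument run through $L^{-1}$, using $\|L^{-1}(\bm{w})\|_1=\|\bm{w}\|_\infty$. Hence $G$ admits an $(r,2,1)$-NZF if and only if it admits an $(r,2,\infty)$-NZF, and taking the infimum over all feasible $r$ on both sides yields $\phi_{2,1}(G)=\phi_{2,\infty}(G)$.

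There is essentially no hard step here: the entire content is that in dimension $2$ the unit balls of $\ell_1$ and $\ell_\infty$ are linearly equivalent (both are squares, one a $45^\circ$-rotated and rescaled copy of the other). The only point requiring minimal care is that $L$ must be a bijection of $\mathbb{R}^2\setminus\{\bm{0}\}$ onto itself so that the nowhere-zero constraint is preserved in both directions — which holds because $L$ is an invertible linear map fixing the origin. It is worth noting that this phenomenon is special to $d=2$ and to the pair $\{1,\infty\}$: no exact linear equivalence of this kind is available for other values of $p$ or in higher dimensions, which is why the remaining cases are treated separately in the rest of the paper.
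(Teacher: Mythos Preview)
Your proof is correct and essentially identical to the paper's: the paper observes that the $\ell_1$- and $\ell_\infty$-annuli in $\mathbb{R}^2$ are related by a $45^\circ$ rotation composed with a scaling by $\sqrt{2}$, which is precisely your map $L(x,y)=(x+y,x-y)$. You provide more explicit detail (the norm identities and the verification that flow conservation and the nowhere-zero condition are preserved), but the underlying idea is the same.
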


\begin{proof}
In $\mathbb{R}^2$, the $1$-norm annulus
\[
\{\bs x \in \mathbb{R}^2 : 1 \le \|\bs x\|_1 \le r-1\}
\]
and the $\infty$-norm annulus
\[
\{\bs x \in \mathbb{R}^2 : 1 \le \|\bs x\|_\infty \le r-1\}
\]
are both square-shaped regions. More precisely, the $1$-norm annulus can be mapped onto the $\infty$-norm annulus by a $45^\circ$ rotation about the origin followed by a scaling by $\sqrt{2}$.
\end{proof}

In the remainder of this section, we prove Theorem~\ref{thm:4-flow-2d-bound}.
To establish the sharpness of the bound in Theorem~\ref{thm:4-flow-2d-bound},
we will use the following lemma comparing flow indices under different norms.
This lemma is also proved independently in \cite{Rieg-Master}. Note that for an $(\R^d\setminus\{0\})$-flow, the constraint $1 \le \lVert f(e) \rVert_p \le r-1$ in Definition~\ref{def:annular-flow} is equivalent to
\[
1 \le \frac{\max \lVert f(e) \rVert_p}{\min \lVert f(e) \rVert_p} \le r-1.
\]

\begin{lemma}\label{lem:norm-comparison}
Let $G$ be a bridgeless graph and $d \ge 1$ an integer. 
For any $p_1 < p_2 \in [1,\infty]$, we have
\[
d^{-(\frac{1}{p_1}-\frac{1}{p_2})} (\phi_{d,p_2}(G)-1) \leq \phi_{d,p_1}(G)-1
   \;\le\;
d^{\frac{1}{p_1}-\frac{1}{p_2}}\,(\phi_{d,p_2}(G)-1).
\]
\end{lemma}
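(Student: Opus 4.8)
The plan is to exploit the elementary norm-comparison inequalities in $\mathbb{R}^d$: for any $\bs{x}\in\mathbb{R}^d$ and any $p_1<p_2$ in $[1,\infty]$,
\[
\|\bs{x}\|_{p_2}\le \|\bs{x}\|_{p_1}\le d^{\,1/p_1-1/p_2}\,\|\bs{x}\|_{p_2}.
\]
Both halves are standard: the left inequality is monotonicity of $\ell_p$-norms in $p$, and the right one follows from H\"older (or power-mean) applied to the $d$ coordinates. I would state these up front, perhaps with a one-line justification, since they are the only analytic input.

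Next I would pass to the ratio formulation recalled just before the lemma: a $(D,f)$ that is an $(\mathbb{R}^d\setminus\{\bm 0\})$-flow gives an $(r,d,p)$-NZF precisely when $r-1\ge \max_e\|f(e)\|_p\,/\,\min_e\|f(e)\|_p$. Since $\phi_{d,p}(G)$ is attained as a minimum (as noted in the excerpt), fix an optimal flow $(D,f)$ for the exponent $p_2$, so that $\phi_{d,p_2}(G)-1 = \max_e\|f(e)\|_{p_2}/\min_e\|f(e)\|_{p_2}$. Now evaluate the same flow $(D,f)$ under the $p_1$-norm. Using the two-sided comparison edgewise, the numerator $\max_e\|f(e)\|_{p_1}$ is at most $d^{\,1/p_1-1/p_2}\max_e\|f(e)\|_{p_2}$, and the denominator $\min_e\|f(e)\|_{p_1}$ is at least $\min_e\|f(e)\|_{p_2}$; dividing gives
\[
\phi_{d,p_1}(G)-1 \le \frac{\max_e\|f(e)\|_{p_1}}{\min_e\|f(e)\|_{p_1}} \le d^{\,1/p_1-1/p_2}\bigl(\phi_{d,p_2}(G)-1\bigr),
\]
which is the upper bound. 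For the lower bound, run the symmetric argument starting from an optimal flow for $p_1$: comparing its $p_2$-norms to its $p_1$-norms (numerator bounded above by the $p_1$-numerator, denominator bounded below by $d^{-(1/p_1-1/p_2)}$ times the $p_1$-denominator) yields $\phi_{d,p_2}(G)-1 \le d^{\,1/p_1-1/p_2}\bigl(\phi_{d,p_1}(G)-1\bigr)$, which rearranges to the claimed left-hand inequality. The cases where $p_1=1$ or $p_2=\infty$ need only the remark that the comparison constants have the correct limiting values, and the attainment of the minimum should be invoked (or, if one prefers to avoid it, one can instead work with near-optimal flows and let the slack tend to zero).

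I do not expect a serious obstacle here; the only points requiring a little care are bookkeeping of which direction each norm inequality goes in numerator versus denominator, and making sure the exponent $1/p_1-1/p_2$ (rather than its negative) lands on the correct side — a sign slip is the most likely error. A secondary subtlety is that one should confirm $\phi_{d,p}(G)$ is genuinely a minimum for every $p\in[1,\infty]$ before fixing an optimal flow; the excerpt asserts this, so I would simply cite it.
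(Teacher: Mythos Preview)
Your proposal is correct and follows essentially the same approach as the paper's own proof: both start from the standard two-sided $\ell_p$-norm comparison in $\mathbb{R}^d$, pass to the ratio $\max_e\|f(e)\|_p/\min_e\|f(e)\|_p$, and then apply the inequalities to an optimal flow for one exponent to bound the flow index for the other. The paper organizes the bookkeeping slightly differently (it names both optimal flows $f_1,f_2$ up front and applies the two halves of a single displayed chain of inequalities), but the underlying argument is identical to yours.
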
 
\begin{proof}
Note that  for every $\bs x \in \mathbb{R}^d$,
\[
\|\bs x\|_{p_2} \le \|\bs x\|_{p_1} \le d^{\frac{1}{p_1}-\frac{1}{p_2}}\|\bs x\|_{p_2}.
\]
Hence, for any $(\mathbb{R}^d \setminus \{0\})$-flow $(D,f)$ of $G$, we have
\begin{equation}\label{eq:two-sided}
\frac{\max_{e} \|f(e)\|_{p_2}}{d^{\frac{1}{p_1}-\frac{1}{p_2}} \min_{e} \|f(e)\|_{p_2}}
   \;\le\;
   \frac{\max_{e} \|f(e)\|_{p_1}}{\min_{e} \|f(e)\|_{p_1}}
   \;\le\;
   \frac{d^{\frac{1}{p_1}-\frac{1}{p_2}} \max_{e} \|f(e)\|_{p_2}}{\min_{e} \|f(e)\|_{p_2}}.
\end{equation}

For each $p \in [1,\infty]$, define
\[
\phi_p(f)=\frac{\max_{e} \|f(e)\|_{p}}{\min_{e} \|f(e)\|_{p}}.
\]
Then $\phi_{d,p}(G) \leq \phi_p(f)$. 
Since $\phi_{d,p}(G)$ is a minimum,  let $f_i$ satisfy $\phi_{p_i}(f_i)=\phi_{d,p_i}(G)-1$ for  each $i=1,2$.   It then follows that $\phi_{p_i}(f_i) \leq \phi_{p_i}(f_j)$ where $\{i,j\} = \{1,2\}$.

Applying the right-hand inequality in \eqref{eq:two-sided} to $f_2$ gives
\[
\phi_{d,p_1}(G)-1 = \phi_{p_1}(f_1)
   \le \phi_{p_1}(f_2)
   \le d^{\frac{1}{p_1}-\frac{1}{p_2}} \phi_{p_2}(f_2)
   = d^{\frac{1}{p_1}-\frac{1}{p_2}}\,(\phi_{d,p_2}(G)-1).
\]

Similarly, applying the left-hand inequality in \eqref{eq:two-sided} to $f_1$ yields
\[
d^{-(\frac{1}{p_1}-\frac{1}{p_2})} (\phi_{d,p_2}(G)-1)
   = d^{-(\frac{1}{p_1}-\frac{1}{p_2})} \phi_{p_2}(f_2)
   \le d^{-(\frac{1}{p_1}-\frac{1}{p_2})} \phi_{p_2}(f_1)
   \le \phi_{p_1}(f_1)
   = \phi_{d,p_1}(G)-1,
\]
which implies $\phi_{d,p_2}(G)-1 \le d^{\frac{1}{p_1}-\frac{1}{p_2}} (\phi_{d,p_1}(G)-1)$.  
This completes the proof.
\end{proof}

We are now ready to prove Theorem~\ref{thm:4-flow-2d-bound}.

\medskip\noindent
\textbf{Proof of Theorem~\ref{thm:4-flow-2d-bound}.}
Since $G$ admits a $4$-NZF, Lemma~\ref{thm:234-flow}(3) implies that $G$ admits a
$\{P_1,P_2,P_1+P_2,P_1-P_2\}$-flow for any choice of vectors
$P_1,P_2\in\mathbb{R}^2$.
It therefore suffices to choose $P_1$ and $P_2$ such that all vectors in
\[
\Omega=\{P_1,P_2,P_1+P_2,P_1-P_2\}
\]
have $p$-norms contained in $[1,r-1]$.
Lemma~\ref{lem:Omega-flow-phi} then yields $\phi_{2,p}(G)\le r$.

\medskip
\noindent\textbf{Case 1: $1\le p\le 2$.}

Let $P_1=(a,a)$ and $P_2=(a,-a)$, where $a=2^{-1/p}$.
Then $\|P_1\|_p=\|P_2\|_p=1$, and
\[
\|P_1\pm P_2\|_p=\|(2a,0)\|_p=\|(0,2a)\|_p=2^{\,1-1/p}.
\]
Thus every vector in $\Omega$ has $p$-norm in $[1,2^{\,1-1/p}]$, and hence
\[
\phi_{2,p}(G)\le 1+2^{\,1-1/p}.
\]

\medskip
\noindent\textbf{Case 2: $2\le p\le\infty$.}

Let $P_1=(1,0)$ and $P_2=(0,1)$.
Then $\|P_1\|_p=\|P_2\|_p=1$ and $\|P_1\pm P_2\|_p=2^{1/p}$, which implies
\[
\phi_{2,p}(G)\le 1+2^{1/p}.
\]

Together, Cases~1 and~2 establish the stated upper bounds.

\medskip
We now prove that these bounds are sharp.
By \cite[Theorem~1]{mattiolo2024lower}, $\phi_{2,2}(K_4)=1+\sqrt{2}$.
If a graph $G'$ admitting a $4$-NZF can be reduced to $K_4$ by identifying vertices
and deleting the resulting loops, then
\[
\phi_{2,2}(G')\ge \phi_{2,2}(K_4)=1+\sqrt{2}.
\]
Applying Lemma~\ref{lem:norm-comparison}, we obtain
\[
\phi_{2,p}(G') \ge
1 + 2^{-(\frac{1}{\min\{2,p\}}-\frac{1}{\max\{2,p\}})}\bigl(\phi_{2,2}(G')-1\bigr)
\ge
\begin{cases}
1 + 2^{\,1 - 1/p}, & 1 \le p \le 2,\\[1mm]
1 + 2^{\,1/p}, & 2 \le p \le \infty.
\end{cases}
\]

Combining this with the previously established upper bounds, we conclude that
$\phi_{2,p}(G')$ attains the values stated in the theorem.
Therefore, the bound in Theorem~\ref{thm:4-flow-2d-bound} is tight.
\hfill$\Box$

\section{Upper bounds for general $p$-norms}

In this section, we apply the approach introduced in Section~2 to prove Theorems~\ref{thm:general-2-3-bounds} and \ref{thm:3-dim-p-12-flow}.


\subsection{Proof of Theorem~\ref{thm:general-2-3-bounds}}
For the sake of convenience we recall Theorem~\ref{thm:general-2-3-bounds} first.

\begin{manualthm}[\ref{thm:general-2-3-bounds}]
Let $G$ be a bridgeless graph. Then we have
\begin{enumerate}[label=(\arabic*)]
    \item $\phi_{2,p}(G) \le 3$ for every $p\in[1,\infty]$;
    \item $\phi_{3,p}(G) \le 1+2^{1/p}$ for $p\in[1,\infty]$, and in particular, $\phi_{3,\infty}(G)=2$;
    \item $\phi_{3,1}(G) \le \dfrac{9}{4}$.
\end{enumerate}
\end{manualthm}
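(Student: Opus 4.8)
The common engine is the reduction assembled in Section~2. By Lemma~\ref{thm:6-flow-points} every bridgeless graph $G$ admits an $\Omega$-flow, where $\Omega=\{P_1,P_2,P_1+P_2,\ P_3,\ P_1\pm P_3,\ P_2\pm P_3,\ P_1+P_2\pm P_3\}$ is generated by an \emph{arbitrary} triple $P_1,P_2,P_3\in\mathbb{R}^d$, and by Lemma~\ref{lem:Omega-flow-phi} any numerical choice with $\|\bs x\|_p\in[1,r-1]$ for every $\bs x\in\Omega$ yields $\phi_{d,p}(G)\le r$. So each of (1)--(3) reduces to exhibiting one good triple and checking the $p$-norms of the (at most) ten resulting vectors. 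I would carry out the three constructions in turn.

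For (1) ($d=2$, target $r=3$): put $\beta=(1-2^{-p})^{1/p}$ and take $P_1=(1,0)$, $P_2=(\tfrac12,\beta)$, $P_3=(-\tfrac12,\beta)$. The identity $(\tfrac12)^p+\beta^p=1$ makes $P_1,P_2,P_3$, as well as $P_1+P_3$ and $P_2-P_3$, have $p$-norm exactly $1$; the remaining vectors are $(\tfrac32,\pm\beta)$, $(0,2\beta)$, $(1,2\beta)$, $(2,0)$, and since $(2\beta)^p=2^p-1$ the vector $(1,2\beta)$ has $p$-norm exactly $2$. The only inequalities left are $2\beta=2(1-2^{-p})^{1/p}\in[1,2]$ (equivalently $2^{1-p}\le1\le2^p$) and $\big((\tfrac32)^p+\beta^p\big)^{1/p}\le2$, i.e. $3^p+2^p-1\le4^p$, both valid for all $p\in[1,\infty]$ (equality at $p=1$). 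I expect the choice of triple to be the only nonroutine point here: more obvious triples (e.g. two of $P_1,P_2,P_3$ at $90^\circ$ or $120^\circ$) fail; alternatively one can settle $p=1$ by an explicit triple and then get $p=\infty$ for free via Observation~\ref{ob:2d-1-inf-equiv}.

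For (2) ($d=3$, target $r=1+2^{1/p}$): keep $\beta=(1-2^{-p})^{1/p}$ and take $P_1=(\tfrac12,\beta,0)$, $P_2=(\tfrac12,-\beta,0)$, $P_3=(0,0,1)$. Again $(\tfrac12)^p+\beta^p=1$, so $P_1,P_2,P_1+P_2,P_3$ all have $p$-norm $1$, while each of $P_1\pm P_3$, $P_2\pm P_3$, $(P_1+P_2)\pm P_3$ has $p$-norm $(1+1)^{1/p}=2^{1/p}$; hence $\phi_{3,p}(G)\le1+2^{1/p}$. At $p=\infty$ all ten norms equal $1$, so $\phi_{3,\infty}(G)\le2$, and since $1\le\|f(e)\|_p\le r-1$ forces $r\ge2$ on any graph with an edge, $\phi_{3,\infty}(G)=2$.

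For (3) ($d=3$, $p=1$, target $r=\tfrac94$): the two-dimensional idea is no longer enough, because in $\mathbb{R}^2$ the largest $\ell^1$-norm occurring in $\Omega$ is forced up to $2$, whereas here all ten $\ell^1$-norms must lie in $[1,\tfrac54]$. The plan is to use the third coordinate to redistribute the $\ell^1$-mass and balance the ten vectors; this is precisely the $p=1$ instance of the sharper estimate $\phi_{3,p}(G)\le1+g_1(p)$ from Theorem~\ref{thm:3-dim-p-12-flow}, since $g_1(1)=\tfrac54$, so (3) should drop out of that construction. I expect this $g_1$-triple to be the real obstacle: the ratio $\tfrac54$ leaves essentially no slack, so one cannot just perturb the $\mathbb{R}^2$ solution — the triple has to be built directly in $\mathbb{R}^3$ — whereas after it is found, everything reduces, as in (1)--(2), to ten norm evaluations and elementary inequalities.
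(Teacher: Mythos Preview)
Your constructions for (1) and (2) are correct and are essentially the paper's own: your parameter $\beta=(1-2^{-p})^{1/p}$ satisfies $2\beta=(2^p-1)^{1/p}$, which is exactly the paper's $a$, and your triples are (up to relabeling and sign changes) the ones in the paper's Table~\ref{tab:three_schemes}. In particular the only nontrivial inequality in (1), namely $3^p+2^p-1\le 4^p$, is the same one the paper isolates and proves via convexity of $x\mapsto x^p$.

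Part (3), however, has a genuine gap. You defer the construction to ``the $g_1$-triple'' of Theorem~\ref{thm:3-dim-p-12-flow}, expecting (3) to drop out of it since $g_1(1)=\tfrac54$. But in the paper the logical dependence runs the other way: the $g_1$ bound in Theorem~\ref{thm:phi-3-12-g1g2} is obtained by first invoking Theorem~\ref{thm:general-2-3-bounds}(3) to get $\phi_{3,1}(G)\le\tfrac94$ and then applying the norm-comparison Lemma~\ref{lem:norm-comparison} to transfer this to general $p$. There is no independent ``$g_1$-triple''; the only direct three-dimensional construction in Theorem~\ref{thm:3-dim-p-12-flow} is the $g_2$-triple, and $g_2(1)=\tfrac43>\tfrac54$. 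So your argument for (3) is circular, and you still owe an explicit choice of $P_1,P_2,P_3\in\mathbb{R}^3$ whose ten $\Omega$-vectors all have $\ell^1$-norm in $[1,\tfrac54]$. The paper supplies one such triple by hand, namely $P_1=(-\tfrac12,0,\tfrac12)$, $P_2=(\tfrac18,-\tfrac12,-\tfrac38)$, $P_3=(\tfrac38,-\tfrac14,\tfrac38)$; your intuition that this is ``the real obstacle'' is right, but the obstacle must be cleared here, not deferred.
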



\begin{proof}

For $1\le p\le\infty$, write $a(p)=(2^p-1)^{1/p}$ and abbreviate it as $a$.
Then $a(\infty)=\lim_{p\to\infty}a(p)=2$, and $a^p=2^p-1$ for $p<\infty$.
Table~\ref{tab:three_schemes} lists feasible choices of vectors 
$P_1,P_2,P_3$ in the required dimensions and norms; these serve as our choices
of $\Omega$ in each case.

\begin{table}[htbp!]
\centering
\small
\renewcommand{\arraystretch}{1.3}
\begin{tabular}{M{2.5cm}  M{2.8cm}  M{2.8cm}  M{2.8cm} M{2.8cm}}
\toprule
\textbf{Vector} & $d=2,\,1\le p\le\infty$ & $d=3,\,1\le p\le\infty$ & $d=3,\,p=\infty$ & $d=3,\,p=1$ \\
\midrule
$P_1$ & $(0,-a)$ & $(0,0,1)$ & $(1,0,0)$ & $\left( -\frac{1}{2}, 0, \frac{1}{2} \right)$  \\
$P_2$ & $\Big(-\frac12,\frac a2\Big)$ & $(0,\frac a2,-\frac12)$ & $(0,1,0)$  & $\left( \frac{1}{8}, -\frac{1}{2}, -\frac{3}{8} \right)$\\
$P_3$ & $(1,0)$ & $(1,0,0)$ & $(0,0,1)$  & $\left( \frac{3}{8}, -\frac{1}{4}, \frac{3}{8} \right)$\\
$P_1+P_2$ & $\Big(-\frac12,-\frac a2\Big)$ & $(0,\frac a2,\frac12)$ & $(1,1,0)$  & $\left( -\frac{3}{8}, -\frac{1}{2}, \frac{1}{8} \right)$\\
$P_1+P_3$ & $(1,-a)$ & $(1,0,1)$ & $(1,0,1)$  & $\left( -\frac{1}{8}, -\frac{1}{4}, \frac{7}{8} \right)$ \\
$P_1-P_3$ & $(-1,-a)$ & $(-1,0,1)$ & $(1,0,-1)$ & $\left( -\frac{7}{8}, \frac{1}{4}, \frac{1}{8} \right)$ \\
$P_2+P_3$ & $\Big(\frac12,\frac a2\Big)$ & $(1,\frac a2,-\frac12)$ & $(0,1,1)$  & $\left( \frac{1}{2}, -\frac{3}{4}, 0 \right)$\\
$P_2-P_3$ & $\Big(-\frac32,\frac a2\Big)$ & $(-1,\frac a2,-\frac12)$ & $(0,1,-1)$ & $\left( -\frac{1}{4}, -\frac{1}{4}, -\frac{3}{4} \right)$  \\
$P_1+P_2+P_3$ & $\Big(\frac12,-\frac a2\Big)$ & $(1,\frac a2,\frac12)$ & $(1,1,1)$  & $\left( 0, -\frac{3}{4}, \frac{1}{2} \right)$\\
$P_1+P_2-P_3$ & $\Big(-\frac32,-\frac a2\Big)$ & $(-1,\frac a2,\frac12)$ & $(1,1,-1)$ & $\left( -\frac{3}{4}, -\frac{1}{4}, -\frac{1}{4} \right)$ \\
\bottomrule
\end{tabular}
\caption{Feasible vector assignments $P_1,P_2,P_3$ for different dimensions and norms.}
\label{tab:three_schemes}
\end{table}

\medskip
\noindent
\textbf{(1) $\boldsymbol{\phi_{2,p}(G)\le 3}$ for all $p\in[1,\infty]$.}

\medskip
Since $\phi_{2,1}(G)=\phi_{2,\infty}(G)$, it suffices to treat $p<\infty$.
Using the vectors in Column~2 of Table~\ref{tab:three_schemes}, it remains to
verify the inequalities
\begin{align}
& 1 \le |a|^p \le 2^p, \label{ineq:1}\\
& 1 \le \frac{1 + |a|^p}{2^p} \le 2^p, \label{ineq:2}\\
& 1 \le 1 + |a|^p \le 2^p, \label{ineq:3}\\
& 1 \le \frac{3^p + |a|^p}{2^p} \le 2^p. \label{ineq:4}
\end{align}
Substituting $a^p = 2^p - 1$, Inequalities~\eqref{ineq:1}--\eqref{ineq:3} and the left-hand side of~\eqref{ineq:4} follow immediately.  
For the right-hand side of~\eqref{ineq:4}, we need to show
\[
\frac{3^p + a^p}{2^p} \le 2^p
\quad \Longleftrightarrow \quad
3^p + 2^p - 1 \le 4^p.
\]
Equivalently, it suffices to prove
\[
4^p - 3^p \;\ge\; 2^p - 1.
\]
Since $f(x)=x^p$ is convex for every $p \ge 1$, we obtain
\[
\frac{4^p - 3^p}{4 - 3}
\;\ge\;
\frac{2^p - 1}{2 - 1}
\quad \Longleftrightarrow \quad
4^p - 3^p \ge 2^p - 1,
\]
which establishes the desired inequality.  
Therefore all constraints are satisfied, and we conclude that $\phi_{2,p}(G) \le 3$.

\medskip
\noindent
\textbf{(2) $\boldsymbol{\phi_{3,p}(G)\le 1+2^{1/p}}$ for $p\in[1,\infty]$.  
In particular, $\phi_{3,\infty}(G)=2$.}

\medskip
For $p=\infty$, besides the general construction in Column~3 of Table~\ref{tab:three_schemes}, we also present the simpler construction in Column~4.
Both constructions produce vectors of $\infty$-norm~$1$, and therefore $\phi_{3,\infty}(G)=2$.

For $p<\infty$, using Column~3, it suffices to check
\begin{align*}
1 &\le \frac{1+|a|^p}{2^p} \le 2, \\
1 &\le 1+\frac{1+|a|^p}{2^p} \le 2,
\end{align*}
which follow immediately from $a^p=2^p-1$.  
Hence $\phi_{3,p}(G)\le 1+2^{1/p}$.

\medskip
\noindent
\textbf{(3) $\boldsymbol{\phi_{3,1}(G)\le \frac94}$.}

\medskip
From Column~5 of Table~\ref{tab:three_schemes}, the maximum $1$-norm among the
listed vectors is $5/4$.  Therefore the corresponding $\Omega$ yields
$\phi_{3,1}(G)\le 9/4$. This completes the proof of Theorem~\ref{thm:general-2-3-bounds}.
\end{proof}

\subsection{Proof of Theorem~\ref{thm:3-dim-p-12-flow}}

We begin by recalling the functions introduced in Section~1:
\[
g_1(p) = \frac{5}{4} \cdot 2^{\,1 - 1/p}, \qquad
g_2(p) = \Bigl[(2^{-1/p} + 3^{-1/p})^p + (2^{-1/p} - 3^{-1/p})^p + \tfrac{1}{3}\Bigr]^{1/p}, \qquad
g_3(p) = 2^{1/p}.
\]
To establish Theorem~\ref{thm:3-dim-p-12-flow}, we proceed by proving the following two intermediate results.

\begin{theorem}\label{thm:phi-3-12-g1g2}
    Let $G$ be a bridgeless graph. Then for every $p\in[1,2]$,
\[
    \phi_{3,p}(G)\le 1+\min\{g_1(p),\, g_2(p)\}.
\]
\end{theorem}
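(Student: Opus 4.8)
The plan is to apply Lemma~\ref{thm:6-flow-points} together with Lemma~\ref{lem:Omega-flow-phi}, so the task reduces to exhibiting, for each $p\in[1,2]$, explicit vectors $P_1,P_2,P_3\in\mathbb{R}^3$ such that every vector in
\[
\Omega=\{P_1,P_2,P_1{+}P_2,P_3,P_1{\pm}P_3,P_2{\pm}P_3,P_1{+}P_2{\pm}P_3\}
\]
has $p$-norm at least $1$, while the maximum $p$-norm over $\Omega$ is at most $\min\{g_1(p),g_2(p)\}$. Since the minimum of two functions is bounded by each, it suffices to produce \emph{two separate} constructions: one scheme of vectors achieving max-norm $\le g_1(p)$, and another achieving max-norm $\le g_2(p)$; taking the better of the two for each $p$ then gives the stated bound.

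First I would design the $g_1$-scheme. The value $g_1(p)=\tfrac54\cdot 2^{1-1/p}$ strongly suggests a construction in which the ``small'' vectors among $P_1,P_2,P_3,P_1{+}P_2$ have $p$-norm normalized so that the extreme combinations like $P_1{+}P_2{\pm}P_3$ come out with a factor $\tfrac54$ relative to $2^{1-1/p}$ — reminiscent of the $d=3,p=1$ column of Table~\ref{tab:three_schemes}, whose max $1$-norm is $5/4$ (matching $g_1(1)=5/4$). So a natural candidate is a one-parameter family interpolating the vectors $\bigl(-\tfrac12,0,\tfrac12\bigr),\ \bigl(\tfrac18,-\tfrac12,-\tfrac38\bigr),\ \bigl(\tfrac38,-\tfrac14,\tfrac38\bigr)$ used there, rescaled by the appropriate $p$-dependent factor and possibly with coordinates re-weighted by powers of $2^{-1/p}$ or $3^{-1/p}$, so that for $p\in[1,2]$ all ten $p$-norms lie between $1$ and $g_1(p)$. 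I would write out the ten vectors symbolically, identify which one attains the maximum norm and which attain the minimum, and then verify the two families of inequalities; convexity of $x\mapsto x^p$ (as already used in the proof of Theorem~\ref{thm:general-2-3-bounds}) is the expected tool for the monotonicity/comparison estimates in $p$.

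Next, the $g_2$-scheme. The shape of $g_2(p)=\bigl[(2^{-1/p}{+}3^{-1/p})^p+(2^{-1/p}{-}3^{-1/p})^p+\tfrac13\bigr]^{1/p}$ points to a construction where one coordinate is of size $3^{-1/p}$ (so its $p$-th power contributes $\tfrac13$) and the other two coordinates of the extremal vector are $2^{-1/p}\pm 3^{-1/p}$. This is exactly what one gets by taking $P_3$ supported on the first two coordinates with entries $\pm 2^{-1/p}$ (so $\|P_3\|_p=2^{1/p}\cdot 2^{-1/p}\cdot\!$... carefully: $\|(2^{-1/p},2^{-1/p},0)\|_p=2^{1/p}2^{-1/p}=1$... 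I need the bookkeeping to work), and $P_1,P_2$ arranged so that $P_1{+}P_2$ puts weight $3^{-1/p}$ on the third coordinate. I would set up $P_1,P_2,P_3$ with this coordinate structure, compute all ten combinations, check the lower bounds $\|\bs x\|_p\ge1$ (the binding cases are likely $P_1,P_2,P_3$ themselves, designed to have norm exactly $1$), and confirm the maximum is $g_2(p)$, attained by a vector of the form $(2^{-1/p}{+}3^{-1/p},\,2^{-1/p}{-}3^{-1/p},\,3^{-1/p})$ up to signs/permutation.

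The main obstacle I anticipate is \emph{simultaneously} controlling all ten $p$-norms over the whole range $p\in[1,2]$: a construction that is optimal at $p=1$ or $p=2$ may violate either the lower bound $\|\bs x\|_p\ge 1$ or the target upper bound at intermediate $p$, because $p$-norms of fixed vectors are not monotone in a uniform way and the ``extremal'' vector of $\Omega$ can switch as $p$ varies. Handling this cleanly will likely require either (a) a $p$-dependent rescaling chosen so that the nominal ``unit'' vectors have norm exactly $1$ for every $p$ and then a convexity argument showing the remaining vectors never exceed $g_i(p)$, or (b) splitting $[1,2]$ into subintervals on which different vectors are extremal. I would first try route (a), checking whether the function $p\mapsto \|P_1{+}P_2{+}P_3\|_p/g_i(p)$ (and its siblings) is $\le 1$ throughout by a direct derivative or convexity estimate; if that fails I would fall back on route (b). Once both schemes are verified, Lemma~\ref{lem:Omega-flow-phi} gives $\phi_{3,p}(G)\le 1+g_1(p)$ and $\phi_{3,p}(G)\le 1+g_2(p)$ for $p\in[1,2]$, and hence $\phi_{3,p}(G)\le 1+\min\{g_1(p),g_2(p)\}$, completing the proof.
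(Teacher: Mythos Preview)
Your high-level strategy --- two separate vector schemes, one for each $g_i$, then take the better one --- matches the paper. The execution, however, diverges in both halves.

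For the $g_1$ bound, you propose to build a new $p$-dependent family interpolating the $p=1$ vectors from Table~\ref{tab:three_schemes}. The paper does nothing of the sort: it simply quotes the already-proved bound $\phi_{3,1}(G)\le 1+\tfrac54$ from Theorem~\ref{thm:general-2-3-bounds}(3) and then applies the norm-comparison Lemma~\ref{lem:norm-comparison} (with $p_1=1$, $p_2=p$) to transfer it, obtaining $\phi_{3,p}(G)\le 1+\tfrac54\cdot 2^{1-1/p}=1+g_1(p)$. No new construction is required. Your interpolation idea might be made to work, but it is far more laborious and you have not identified which rescaling of those specific vectors would produce the factor $2^{1-1/p}$; the anticipated difficulty of tracking which vector is extremal as $p$ varies is precisely what the norm-comparison lemma lets you avoid.

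For the $g_2$ bound, your intuition about the coordinate sizes $2^{-1/p}$ and $3^{-1/p}$ is right, but you have not found a working assignment (your tentative $P_3=(2^{-1/p},\pm 2^{-1/p},0)$ is not the one). The paper takes $a=2^{-1/p}$, $b=3^{-1/p}$ and
\[
P_1=(a,a,0),\qquad P_2=(-a,0,-a),\qquad P_3=(b,-b,-b).
\]
The structural point you are missing is this: each of $P_1$, $P_2$, $P_1{+}P_2$ has exactly two nonzero entries $\pm a$, so $p$-norm $1$; $P_3$ has three entries $\pm b$, so $p$-norm $1$; and every one of the six mixed combinations $P_1\pm P_3$, $P_2\pm P_3$, $(P_1{+}P_2)\pm P_3$ is a signed coordinate permutation of $(a{+}b,\,a{-}b,\,b)$, hence all six have $p$-norm exactly $g_2(p)$. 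Thus the ten norms take only the two values $1$ and $g_2(p)$, for every $p$. The obstacle you anticipated --- the extremal vector switching with $p$, forcing a subinterval decomposition --- simply does not occur with the right construction. The only residual check is $g_2(p)\ge 1$, dispatched via $(a{+}b)^p+b^p\ge(2b)^p+b^p=(2^p+1)/3\ge 1$.
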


\begin{proof}
By Theorem~\ref{thm:general-2-3-bounds}(3), we have $\phi_{3,1}(G)\le1+\frac54$. 
Applying Lemma~\ref{lem:norm-comparison} then yields
\[
    \phi_{3,p}(G)\le 1+2^{\,1-1/p}\bigl(\phi_{3,1}(G)-1\bigr)
    =1+\frac54\cdot 2^{\,1-1/p}
    =1+g_1(p).
\]
It therefore remains to prove that $\phi_{3,p}(G)\le 1+g_2(p)$.

Define
\[
    a(p)=2^{-1/p},
    \qquad
    b(p)=3^{-1/p},
\]
and write $a$ and $b$ for brevity.  
Consider the vectors
\[
    P_1=(a,a,0),\qquad
    P_2=(-a,0,-a),\qquad
    P_3=(b,-b,-b).
\]
The remaining vectors in the set $\Omega$ (defined in Theorem~\ref{thm:6-flow-points}) are
\[
\begin{aligned}
    P_1+P_2 &= (0,a,-a),\\
    P_1+P_3 &= (a+b,\,a-b,\,-b),\\
    P_1-P_3 &= (a-b,\,a+b,\,b),\\
    P_2+P_3 &= (-a+b,\,-b,\,-a-b),\\
    P_2-P_3 &= (-a-b,\,b,\,-a+b),\\
    P_1+P_2+P_3 &= (\,b,\,a-b,\,-a-b),\\
    P_1+P_2-P_3 &= (-b,\,a+b,\,-a+b).
\end{aligned}
\]

We now verify that each vector has $p$-norm between $1$ and $g_2(p)$.  
Since $2a^p=1$ and $3b^p=1$, we obtain
\[
    \|P_1\|_p^p=\|P_2\|_p^p=\|P_1+P_2\|_p^p=2a^p=1,
    \qquad
    \|P_3\|_p^p=3b^p=1.
\]
All remaining vectors have the same $p$-norm as $P_1+P_3$, namely
\[
    \|P_1+P_3\|_p
    =\bigl[(a+b)^p+(a-b)^p+b^p\bigr]^{1/p}
    =g_2(p).
\]

Finally, since
\[
    g_2(p)
    \ge \bigl[(a+b)^p + b^p\bigr]^{1/p}
    \ge \bigl[(2b)^p + b^p\bigr]^{1/p}
    =\Bigl(\frac{2^p+1}{3}\Bigr)^{1/p}
    \ge 1,
\]
all vectors in $\Omega$ indeed have $p$-norm at least $1$ and at most $g_2(p)$.
This completes the proof.
\end{proof}

\begin{theorem}\label{thm:phi3sqrt2}
    For every bridgeless graph $G$ and every $p\in[1,\infty]$,
\[
\phi_{3,p}(G)\le1+\sqrt2.
\]
\end{theorem}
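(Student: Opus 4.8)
\textbf{Proof proposal for Theorem~\ref{thm:phi3sqrt2}.}
The plan is to combine the two partial bounds already in play. By Theorem~\ref{thm:phi-3-12-g1g2} we have $\phi_{3,p}(G)\le 1+\min\{g_1(p),g_2(p)\}$ for $p\in[1,2]$, and by Theorem~\ref{thm:general-2-3-bounds}(2) we have $\phi_{3,p}(G)\le 1+2^{1/p}=1+g_3(p)$ for all $p\in[1,\infty]$. Since $g_3(p)=2^{1/p}\le\sqrt2$ exactly when $p\ge 2$, the bound $1+g_3(p)\le 1+\sqrt2$ already settles the range $p\in[2,\infty]$ immediately. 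So the entire content of the theorem lies in the interval $p\in[1,2]$, where I must show that $\min\{g_1(p),g_2(p)\}\le\sqrt2$.

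First I would handle the two endpoints by direct substitution: at $p=2$, $g_1(2)=\tfrac54\cdot 2^{1/2}=\tfrac{5\sqrt2}{4}>\sqrt2$, while $g_2(2)=\bigl[(2^{-1/2}+3^{-1/2})^2+(2^{-1/2}-3^{-1/2})^2+\tfrac13\bigr]^{1/2}=\bigl[2(\tfrac12+\tfrac13)+\tfrac13\bigr]^{1/2}=\bigl[\tfrac53+\tfrac13\bigr]^{1/2}=\sqrt2$, so the minimum equals $\sqrt2$ at $p=2$ (witnessed by $g_2$); at $p=1$, $g_1(1)=\tfrac54<\sqrt2$, so the minimum is well below $\sqrt2$ there. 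The natural approach for the interior is a crossover argument: $g_1$ is increasing on $[1,2]$ (it is $\tfrac54\cdot 2^{1-1/p}$, and $1-1/p$ increases), so there is a threshold $p_0$ where $g_1(p_0)=\sqrt2$, i.e. $2^{1-1/p_0}=\tfrac{4\sqrt2}{5}$, giving $1-1/p_0=\tfrac12-\log_2(5/4)$, so $p_0=\bigl(\tfrac12+\log_2(5/4)\bigr)^{-1}\approx 1.316$. For $p\in[1,p_0]$ we use $g_1(p)\le g_1(p_0)=\sqrt2$; for $p\in[p_0,2]$ we must instead show $g_2(p)\le\sqrt2$.

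Thus the crux reduces to the single inequality $g_2(p)\le\sqrt2$ on $[p_0,2]$, equivalently $(a+b)^p+(a-b)^p+\tfrac13\le 2^{p/2}$ where $a=2^{-1/p}$, $b=3^{-1/p}$. I would rewrite this by factoring: $(a+b)^p+(a-b)^p = a^p\bigl[(1+b/a)^p+(1-b/a)^p\bigr]=\tfrac12\bigl[(1+t)^p+(1-t)^p\bigr]$ with $t=b/a=(2/3)^{1/p}\in(2/3,1)$ for $p\in[1,2]$, since $a^p=\tfrac12$. So the claim becomes $\tfrac12\bigl[(1+t)^p+(1-t)^p\bigr]\le 2^{p/2}-\tfrac13$. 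One clean route: bound $(1+t)^p+(1-t)^p$ using convexity or the binomial series $(1+t)^p+(1-t)^p=2\sum_{k\ge0}\binom{p}{2k}t^{2k}$, and compare termwise, or simply verify monotonicity of the gap function $h(p)=2^{p/2}-\tfrac13-\tfrac12[(1+t)^p+(1-t)^p]$ on the relevant subinterval and check it is nonnegative, using $h(2)=0$ and $h(p_0)>0$ (both computable).

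The main obstacle I anticipate is precisely this last step: $g_2$ is an awkward transcendental function of $p$ (it involves $2^{-1/p}$, $3^{-1/p}$, and an outer $p$-th root), so proving $g_2(p)\le\sqrt2$ cleanly on $[p_0,2]$ — rather than just numerically — requires a careful calculus or convexity argument, and one must be sure the inequality is genuinely tight only at $p=2$ and does not fail somewhere in the interior. An alternative that sidesteps delicate analysis is to abandon the $g_1/g_2$ split entirely on $[1,2]$ and instead exhibit a single explicit family of vectors $P_1,P_2,P_3\in\mathbb{R}^3$, interpolating between the $p=1$ and $p=2$ constructions, whose $\Omega$ has all $p$-norms in $[1,\sqrt2]$ simultaneously for every $p\in[1,2]$; but finding such a uniform family looks at least as hard, so I would commit to the crossover argument and push through the estimate on $g_2$.
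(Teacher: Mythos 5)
Your overall architecture matches the paper's: dispose of $p\in[2,\infty]$ via Theorem~\ref{thm:general-2-3-bounds}(2), and reduce $p\in[1,2]$ to the inequality $g_2(p)\le\sqrt2$. Your one genuine twist — using $g_1(p)\le\sqrt 2$ on $[1,p_0]$ with $p_0=\bigl(\tfrac12+\log_2(5/4)\bigr)^{-1}\approx 1.316$ so that $g_2\le\sqrt2$ is only needed on $[p_0,2]$ — is a legitimate small simplification (the paper proves $g_2(p)\le\sqrt2$ on all of $[1,2]$, more than is strictly required), and your endpoint computations $g_2(2)=\sqrt2$, $g_1(1)=\tfrac54$ are correct.

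However, there is a genuine gap at exactly the step you yourself flag as the crux: you never prove $g_2(p)\le\sqrt2$ on $[p_0,2]$. You only list candidate routes (binomial expansion, ``verify monotonicity of the gap function $h(p)=2^{p/2}-\tfrac13-\tfrac12[(1+t)^p+(1-t)^p]$ and check $h(2)=0$, $h(p_0)>0$''), and none of these is carried out or even shown to be viable. In particular, checking two endpoint values is worthless without establishing monotonicity or concavity of $h$, and that is not at all routine here because $t=(2/3)^{1/p}$ itself varies with $p$; moreover $g_2$ is not monotone on $[1,2]$ and comes within roughly $0.01$ of $\sqrt2$ in the interior (this is why the paper's proof is as heavy as it is: an integral-representation comparison of $p$-th powers with squares reducing, after the substitution $m=-2/p$, to $2(3^m-2^m)+\tfrac13\le0$ on the range corresponding to $p\in[1.6,2]$, plus a monotonicity argument for $F_1(p)=(r^{1/p}+1)^p$ and $F_2(p)=(r^{1/p}-1)^p$ combined with interval-by-interval numerical bounds $\Phi(p_1,p_2)<\sqrt2$ on five subintervals of $[1,1.6]$). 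So while your plan points in the right direction and would shrink the interval on which the hard estimate is needed, the decisive analytic inequality is asserted rather than proved, and the proposal as written does not constitute a proof of the theorem.
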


\begin{proof}
By Theorem~\ref{thm:general-2-3-bounds}(2), we have $\phi_{3,p}(G)\le 1+2^{1/p}\le 1+\sqrt2$ for all $p\in[2,\infty]$. Thus it remains to consider $p\in[1,2]$. By Theorem~\ref{thm:phi-3-12-g1g2}, for every $p\in[1,2]$,
\[
\phi_{3,p}(G)\le 1+\min\{g_1(p),g_2(p)\},
\]
so it suffices to show that $g_2(p)\le\sqrt2$ for all $p\in[1,2]$. Recall that
\[
g_2(p)
=\Big[(2^{-1/p}+3^{-1/p})^p + (2^{-1/p}-3^{-1/p})^p + \tfrac13\Big]^{1/p}.
\]

\medskip
We first handle the range $p\in[1.6,2]$. Set
\[
I(p)
=\Big[(2^{-1/p}+3^{-1/p})^2+(2^{-1/p}-3^{-1/p})^2\Big]
-\Big[(2^{-1/p}+3^{-1/p})^p+(2^{-1/p}-3^{-1/p})^p\Big].
\]
Let
\[
x(p)=2^{-1/p}+3^{-1/p},\qquad y(p)=2^{-1/p}-3^{-1/p}.
\]
Then $x(p)$ is increasing on $[1,2]$ and, for all $p\in[1.6,2]$, $x(p)\ge x(1.6)>1$. Moreover $y(p)$ is decreasing on $[1,2]$ and satisfies $0<y(p)\le y(1)=1/6<1$. A direct numerical evaluation shows that
\[
x(1.6)^{1.6}\ln x(1.6)+y(1.6)^{1.6}\ln y(2)>0.08>0.
\]
Using the identities
\[
z^2-z^p=\int_p^2 z^t\ln z\,dt,
\]
we obtain
\[
\begin{aligned}
I(p)
&=(x(p)^2-x(p)^p)+(y(p)^2-y(p)^p) \\
&=\int_p^2 x(p)^t\ln x(p)\,\mathrm{d}t+\int_p^2 y(p)^t\ln y(p)\,\mathrm{d}t\\
&=\int_p^2\!\Big[x(p)^t\ln x(p)-y(p)^t\ln\tfrac1{y(p)}\Big]\,\mathrm{d}t.
\end{aligned}
\]
Since $x(p)$ is increasing and $y(p)$ is decreasing on $[1.6,2]$, we have
\[
\begin{aligned}
I(p)
&\ge \int_p^2\!\Big[x(1.6)^{1.6}\ln x(1.6)-y(1.6)^{1.6}\ln\tfrac1{y(2)}\Big]\,\mathrm{d}t\\
&=(2-p)\Big[x(1.6)^{1.6}\ln x(1.6)+y(1.6)^{1.6}\ln y(2)\Big]\ge0.
\end{aligned}
\]
Hence $I(p)\ge0$ for $p\in[1.6,2]$, and consequently
\[
\begin{aligned}
g_2(p)
&=\big(x(p)^p+y(p)^p+\tfrac13\big)^{1/p}
\le\big(x(p)^2+y(p)^2+\tfrac13\big)^{1/p}\\
&=\big(2(2^{-2/p}+3^{-2/p})+\tfrac13\big)^{1/p}.
\end{aligned}
\]
Thus it remains to verify the inequality
\[
H_1(p):=2\big(2^{-2/p}+3^{-2/p}\big)+\tfrac13-2^{p/2}\le0
\qquad (p\in[1.6,2]).
\]
Let $m=-2/p$. Then $p\in[1.6,2]$ corresponds to $m\in[-5/4,-1]$, and
\[
H_1(p)=H_2(m):=2(2^m+3^m)+\tfrac13-2^{-1/m}.
\]
Since $-1/m\ge m+2$ on $[-5/4,-1]$, we obtain
\[
H_2(m)\le 2(2^m+3^m)+\tfrac13-2^{m+2}=2(3^m-2^m)+\tfrac13.
\]
Define
\[
h(m)=2(3^m-2^m)+\tfrac13.
\]
The function $h$ is strictly decreasing on $(-\infty,m_0]$ and strictly increasing on $[m_0,\infty)$, where
\[
m_0=\log_{3/2}\log_2 3
\]
is its unique minimizer. Since
\[
h(-5/4)\le -0.001<0,\qquad h(-1)=0,
\]
we have $h(m)\le0$ for all $m\in[-5/4,-1]$, which implies $H_1(p)\le0$, and therefore $g_2(p)\le\sqrt2$ on $[1.6,2]$.

\medskip
We now consider $p \in [1,1.6]$.  
For any $1 \le p_1 \le p \le p_2 \le 1.6$, we can rewrite $g_2(p)$ as
\[
\begin{aligned}
g_2(p)
&= \Bigl[(2^{-1/p}+3^{-1/p})^p + (2^{-1/p}-3^{-1/p})^p + \tfrac13 \Bigr]^{1/p} \\
&= \left[ \frac{(r^{1/p}+1)^p + (r^{1/p}-1)^p + 1}{3} \right]^{1/p},
\end{aligned}
\]
where $r = 2/3$. Define
\[
F_1(p) = (r^{1/p}+1)^p, \qquad F_2(p) = (r^{1/p}-1)^p.
\]
Differentiation shows that
\[
F_1'(p) = \frac{F_1(p)\,\ln r}{p} \Biggl(\frac{\ln(r^{1/p}+1)^p}{\ln r} - \frac{r^{1/p}}{r^{1/p}+1}\Biggr)
\ge \frac{F_1(p)\,\ln r}{p} \Biggl(\frac{\ln(1+r)}{\ln r} - \frac{r^{1/p}}{r^{1/p}+1}\Biggr) \ge 0,
\]
and
\[
F_2'(p) = \frac{F_2(p)\,\ln r}{p} \Biggl(\frac{\ln(r^{1/p}-1)^p}{\ln r} - \frac{r^{1/p}}{r^{1/p}-1}\Biggr)
< \frac{F_2(p)\,\ln r}{p} \Biggl(1 - \frac{r^{1/p}}{r^{1/p}-1}\Biggr) < 0.
\]
Hence, $F_1$ is increasing and $F_2$ is decreasing on $[1,1.6]$.  
It follows that, on each subinterval $[p_1,p_2]$,
\[
g_2(p) = \left[ \frac{F_1(p) + F_2(p) + 1}{3} \right]^{1/p} \le \left[ \frac{F_1(p_2) + F_2(p_1) + 1}{3} \right]^{1/p_1}.
\]

Define
\[
\Phi(p_1,p_2) := \left[ \frac{F_1(p_2) + F_2(p_1) + 1}{3} \right]^{1/p_1}.
\]
By direct computation over consecutive subintervals of $[1,1.6]$, we obtain
\[
\begin{aligned}
\Phi(1,1.1) &= 1.391508229\ldots,\\
\Phi(1.1,1.25) &= 1.408323784\ldots,\\
\Phi(1.25,1.4) &= 1.403094249\ldots,\\
\Phi(1.4,1.5) &= 1.384104551\ldots,\\
\Phi(1.5,1.6) &= 1.393230647\ldots.
\end{aligned}
\]

On each subinterval, the monotonicity of the terms ensures that $g_2(p) \le \Phi(p_1,p_2)$ for all $p$ in that subinterval.  
Since all computed values of $\Phi(p_1,p_2)$ are less than $\sqrt{2} \approx 1.4142$, we conclude that
\[
g_2(p) \le \sqrt{2} \quad \text{for every } p \in [1,1.6].
\]

\medskip
Combining the two ranges  $[1,1.6]$ and $[1.6,2]$ completes the proof that $g_2(p)\le\sqrt2$ for all $p\in[1,2]$. This completes the proof.
\end{proof}

Combining Theorems~\ref{thm:phi-3-12-g1g2}, \ref{thm:phi3sqrt2}, and \ref{thm:general-2-3-bounds}(2), Theorem~\ref{thm:3-dim-p-12-flow} follows directly as desired.

\section{Upper bounds under oriented $k$-cycle $2l$-covers}

In this section, we apply the approach introduced in Section~2 to prove
Theorem~\ref{thm:intro-k-OCDC-phi}. We begin by establishing an auxiliary result
that provides the foundation for deriving general upper bounds.

\begin{theorem}\label{thm:k-OCDC-points}
Let $G$ be a bridgeless graph, and let $d \ge 2$, $k \ge 2$, and $l \ge 1$ be integers.  
Suppose that $G$ admits an oriented $k$-cycle $2l$-cover
\[
\mathcal{C} = \{C_1, C_2, \dots, C_k\}.
\]
Then, for any vectors $P_1, P_2, \dots, P_k \in \mathbb{R}^d$, the graph $G$ admits
an $\Omega$-flow, where
\[
\Omega =
\Bigl\{
\sum_{i \in I} P_i - \sum_{j \in J} P_j :
I, J \subseteq \{1,2,\dots,k\},\ |I| = |J| = l,\ I \cap J = \varnothing
\Bigr\}.
\]
\end{theorem}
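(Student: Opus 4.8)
The plan is to build the desired $\Omega$-flow directly as a superposition of $k$ elementary flows, one for each cycle in the cover. First I would fix an arbitrary reference orientation $D_0$ of $G$; this provides a common frame against which to compare each directed cycle $C_i$. For each $i \in \{1,\dots,k\}$ and each edge $e$, record a sign $\varepsilon_i(e) \in \{-1,0,+1\}$: it equals $+1$ if $C_i$ traverses $e$ in the direction of $D_0$, $-1$ if $C_i$ traverses $e$ against $D_0$, and $0$ if $e \notin E(C_i)$. Since $C_i$ is a directed cycle (every vertex has equal in- and out-degree), it is an edge-disjoint union of directed circuits, so assigning the constant vector $P_i$ along each circuit in its direction of traversal yields a flow; equivalently, the map $g_i \colon E(G) \to \mathbb{R}^d$ given by $g_i(e) = \varepsilon_i(e)\,P_i$ satisfies flow conservation at every vertex with respect to $D_0$.

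Next I would set $f = \sum_{i=1}^k g_i$, which is again a flow with respect to $D_0$, as a sum of flows is a flow. It then remains to identify $f(e)$ for each edge $e$. Writing $I_e = \{\, i : \varepsilon_i(e) = +1 \,\}$ and $J_e = \{\, j : \varepsilon_j(e) = -1 \,\}$, these index sets are disjoint (each $\varepsilon_i(e)$ takes a single well-defined value), and the hypothesis that $\mathcal{C}$ is a $(k,2l)$-OCC — each edge covered exactly $2l$ times, with $l$ occurrences in each direction — is exactly the assertion that $|I_e| = |J_e| = l$. Consequently
\[
f(e) = \sum_{i=1}^k \varepsilon_i(e)\,P_i = \sum_{i \in I_e} P_i - \sum_{j \in J_e} P_j \in \Omega ,
\]
so $f \colon E(G) \to \Omega$ and hence $(D_0, f)$ is an $\Omega$-flow of $G$, as required. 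Note that no condition on the $P_i$ is used, consistent with the statement.

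I do not expect a genuine obstacle: the proof is a bookkeeping of orientations. The two points needing care are (i) verifying that a directed cycle, once loaded with a constant vector along its direction of traversal, induces a flow — this is standard via the decomposition into directed circuits — and (ii) correctly translating the phrase ``$l$ occurrences in each direction'' from the definition of a $(k,2l)$-OCC into the combinatorial identities $|I_e| = |J_e| = l$ after fixing $D_0$. With these in hand, the formula $f(e) = \sum_{i \in I_e} P_i - \sum_{j \in J_e} P_j$ is immediate and lies in $\Omega$ by construction.
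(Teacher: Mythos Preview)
Your proposal is correct and follows essentially the same approach as the paper: fix a reference orientation, superpose the constant-$P_i$ flows supported on the directed cycles $C_i$, and read off the value on each edge using the index sets $I_e,J_e$ determined by the $(k,2l)$-OCC condition. The paper's proof is notationally slightly different (it writes $I(e)=\{i:C_i(e)=D(e)\}$ rather than using signs $\varepsilon_i(e)$), but the argument is identical in content.
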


\begin{proof}
For each directed cycle $C_i \in \mathcal{C}$, assign the vector $P_i \in \mathbb{R}^d$ to every edge of $C_i$.  
Since $C_i$ is a directed cycle, this assignment defines a $\{P_i\}$-flow supported on $C_i$.

Let $C_i(e)$ denote the orientation of $e \in E(G)$ in the directed cycle $C_i$.  
Fix an arbitrary orientation $D$ of $G$, and for each edge $e \in E(G)$, define
\[
I(e) = \{\, i : C_i(e) = D(e) \,\}, 
\qquad 
J(e) = \{\, i : C_i(e) \neq D(e) \,\}.
\]
Because $\mathcal{C}$ is an oriented $k$-cycle $2l$-cover, each edge $e$ appears in exactly $l$ cycles in each direction.  
Hence, $|I(e)| = |J(e)| = l$ and $I(e) \cap J(e) = \varnothing$.

Define a function $f : E(G) \to \mathbb{R}^d$ by
\[
f(e) = \sum_{i \in I(e)} P_i - \sum_{j \in J(e)} P_j.
\]
By construction, $f(e) \in \Omega$ for every $e \in E(G)$.  
Moreover, since $f$ is obtained as a linear combination of the flows on cycles with coefficients $\pm 1$, it satisfies flow conservation at every vertex.  
Therefore, $(D,f)$ is an $\Omega$-flow of $G$.
\end{proof}

We are now ready to prove Theorem~\ref{thm:intro-k-OCDC-phi}. We first recall its statement.

\begin{manualthm}[\ref{thm:intro-k-OCDC-phi}]
Let $G$ be a graph that admits a $(k,2l)$-OCC. Then we have
\begin{enumerate}[label=(\arabic*)]
    \item for every $p \in [1,\infty]$, $\phi_{d,p}(G) = 2$, where $d = k-1$;
    \item when $l = 1$, $\phi_{d,1}(G) = 2$, where $d = \lceil k/2 \rceil$.
\end{enumerate}
\end{manualthm}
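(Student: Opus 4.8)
The plan is to reduce everything, via Theorem~\ref{thm:k-OCDC-points} and Lemma~\ref{lem:Omega-flow-phi}, to a purely finite-dimensional geometric placement problem. The constraint in Definition~\ref{def:annular-flow} forces $r\ge 2$, so $\phi_{d,p}(G)\ge 2$ always ($G$ is bridgeless since every edge lies on a circuit of the given cover); hence it suffices to prove $\phi_{d,p}(G)\le 2$. If $G$ admits a $(k,2l)$-OCC, then Theorem~\ref{thm:k-OCDC-points} produces an $\Omega$-flow for any choice of $P_1,\dots,P_k\in\mathbb{R}^d$, where $\Omega$ is the set of ``signed $l$-sums'' $\sum_{i\in I}P_i-\sum_{j\in J}P_j$ with $|I|=|J|=l$ and $I\cap J=\varnothing$. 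Applying Lemma~\ref{lem:Omega-flow-phi} with $r=2$, the theorem follows once we exhibit vectors $P_1,\dots,P_k$ for which \emph{every} element of $\Omega$ has $p$-norm exactly $1$.

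For part (1), with $d=k-1$, I would take $P_i=e_i$ for $1\le i\le k-1$ and $P_k=c\,\mathbf 1$ in $\mathbb{R}^{k-1}$, where $\mathbf 1=e_1+\dots+e_{k-1}$ and $c\ge 0$ is a scalar to be determined, and then rescale all vectors by a common factor at the end. A vector $v=\sum_{i\in I}P_i-\sum_{j\in J}P_j\in\Omega$ splits into two types. If $k\notin I\cup J$, then $v$ has $l$ coordinates equal to $1$, $l$ equal to $-1$, and the rest $0$, so $\|v\|_p^p=2l$. If $k\in I\cup J$ (the subcases $k\in I$ and $k\in J$ giving the same norm by symmetry), then the multiset of absolute values of the coordinates of $v$ consists of $l-1$ copies of $|1+c|$, $l$ copies of $|1-c|$, and $k-2l$ copies of $|c|$, so
\[
\|v\|_p^p=(l-1)|1+c|^p+l|1-c|^p+(k-2l)|c|^p=:B(c).
\]
Since $B(0)=2l-1<2l$ and $B(c)\to\infty$ as $c\to\infty$, the intermediate value theorem yields $c>0$ with $B(c)=2l$; for this $c$ every vector of $\Omega$ has $p$-norm $(2l)^{1/p}$, and dividing every $P_i$ by $(2l)^{1/p}$ makes all of them equal to $1$. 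For $p=\infty$ one takes instead $c=0$, in which case $\|v\|_\infty=1$ for every $v\in\Omega$; and when $k=2l$ the first type is vacuous, but the same argument goes through.

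For part (2) we have $l=1$, so $\Omega=\{P_i-P_j:1\le i\neq j\le k\}$, and with $d=\lceil k/2\rceil=:n$ we need $k$ points in $\mathbb{R}^n$ that are pairwise at $\ell_1$-distance $1$. I would use the $2n\ (\ge k)$ cross-polytope vectors $\pm\tfrac12 e_1,\dots,\pm\tfrac12 e_n$: two of the same index but opposite sign differ by $\pm e_a$, and two of different indices differ by $\pm\tfrac12 e_a\mp\tfrac12 e_b$, so in both cases the $\ell_1$-distance is exactly $1$. Choosing $P_1,\dots,P_k$ to be $k$ distinct such vectors makes every element of $\Omega$ have $\ell_1$-norm $1$, which completes the proof. (The case $k=2$, where $d=1$ lies outside the hypotheses of Theorem~\ref{thm:k-OCDC-points}, is the classical fact that a $2$-regular graph carries a nowhere-zero $2$-flow.)

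The only genuinely delicate point is part (1): one needs a \emph{single} family $P_1,\dots,P_k$ whose signed $l$-sums all land on the $\ell_p$-unit sphere for \emph{every} $p\in[1,\infty]$, and the mechanism that makes this possible is the lone free parameter $c$ in $P_k=c\,\mathbf 1$, tuned by an intermediate-value argument so that the sums involving $P_k$ acquire the same norm $(2l)^{1/p}$ as the sums avoiding it. Once this is in place, part (2) needs only the explicit equilateral set peculiar to the $\ell_1$ norm, and the boundary situations $p=\infty$, $k=2l$, and $k=2$ are routine. As a consistency check, for $k\in\{3,4\}$ and $l=1$ this recovers Corollary~\ref{Cor:3-4-flow}.
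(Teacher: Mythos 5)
Your proposal is correct, and it follows the paper's overall skeleton: reduce everything to a finite geometric placement problem via Theorem~\ref{thm:k-OCDC-points} and Lemma~\ref{lem:Omega-flow-phi}, then exhibit vectors whose signed $l$-sums all have $p$-norm exactly $1$ (the lower bound $\phi_{d,p}(G)\ge 2$ being built into Definition~\ref{def:annular-flow}, as you note). Your part (2) is essentially the paper's construction verbatim: its vectors, with the single nonzero entry $(-1)^i/2$ in coordinate $\lceil i/2\rceil$, are exactly $k$ distinct cross-polytope vectors $\pm\tfrac12 e_j$, so the two arguments coincide. Part (1) differs in the choice of vectors. The paper takes $v_i=e_i-\tfrac1{k}\mathbf{1}$ in $\mathbb{R}^{k}$; then every signed $l$-sum has exactly $l$ coordinates equal to $+1$ and $l$ equal to $-1$, so all sums have the common norm $(2l)^{1/p}$ simultaneously for every $p$, and one rescales by $(2l)^{-1/p}$. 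The price is that these vectors lie in the zero-sum hyperplane $H\subset\mathbb{R}^{k}$, and the paper passes to dimension $d=k-1$ by declaring $H\cong\mathbb{R}^{d}$ --- an identification that is not norm-preserving for general $p$ and is left implicit. Your construction works intrinsically in $\mathbb{R}^{k-1}$ (the basis vectors together with $c\,\mathbf{1}$, with $c=c(p)$ chosen by the intermediate value theorem so that $B(c)=2l$), which avoids that identification entirely, at the cost of a non-explicit, $p$-dependent parameter in place of one explicit family uniform in $p$; your coordinate-multiset computation, the IVT step, the specialization $c=0$ for $p=\infty$, and the vacuous-case remark for $k=2l$ are all sound. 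One small slip: in your $k=2$ aside, a graph with a $(2,2)$-OCC is an even graph, not necessarily $2$-regular, though the conclusion $\phi_{1}(G)=2$ is unaffected (the paper does not treat this degenerate case at all, since Theorem~\ref{thm:k-OCDC-points} assumes $d\ge2$).
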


\begin{proof}
(1) Let  $\{e_1, \dots, e_{d+1}\}$  be the standard basis of $\mathbb{R}^{d+1}$.  
Define
\[
v_i := e_i - \frac{1}{d+1} \mathbf{1}, \qquad  \mbox{for each}~~i=1,\dots,d+1,
\]
where $\mathbf{1} = (1,1,\dots,1) \in \mathbb{R}^{d+1}$.  

For any disjoint subsets $I, J \subseteq\{1,\dots,d+1\}$ with $|I| = |J| = l$, define
\[
v_{I,J} := \sum_{i \in I} v_i - \sum_{j \in J} v_j.
\]
Then $v_{I,J}$ has exactly $l$ coordinates equal to $1$, $l$ coordinates equal to $-1$, and the remaining coordinates equal to $0$.  
Hence, 
\[
\|v_{I,J}\|_p = (2l\cdot1^p)^{1/p} = (2l)^{1/p} \quad \text{for } p \ge 1, 
\qquad \mbox{and}~~
\|v_{I,J}\|_\infty = 1.
\]

Set the scaling factor
\[
\alpha_p := 
\begin{cases}
(2l)^{1/p}, & 1 \le p < \infty,\\[1mm]
1, & p = \infty,
\end{cases}
\]
and set 
\[
s_i := \frac{v_i}{\alpha_p}, \quad i=1,\dots,d+1.
\]
Then for any $I,J$ as above,
\[
\Bigl\| \sum_{i \in I} s_i - \sum_{j \in J} s_j \Bigr\|_p = 1 \quad \text{for all } p \in [1,\infty].
\]
All vectors $s_i$ lie in the hyperplane
\[
H := \left\{ x=(x_1,\dots,x_{d+1}) \in \mathbb{R}^{d+1} : \sum_{l=1}^{d+1} x_l = 0 \right\} \cong \mathbb{R}^d.
\]

By Theorem~\ref{thm:k-OCDC-points}, assigning $P_i := s_i$ to the $i$th cycle of an oriented $k$-cycle $2l$-cover yields an $\Omega$-flow of $G$. Although the vectors $s_i$ are defined in $\mathbb{R}^{d+1}$, they all lie in the $d$-dimensional hyperplane $H$, and hence the resulting $\Omega$-flow is realized in $\mathbb{R}^d$. Moreover, every vector in $\Omega$ has $p$-norm equal to $1$. Therefore, this construction produces a feasible $\Omega$-flow for $G$, and we conclude that
\[
\phi_{d,p}(G) = 2 \quad \text{for all } p \ge 1 \text{ or } p = \infty.
\]

(2)   
Define $k$ vectors $P_1, \dots, P_k \in \mathbb{R}^d$ as
\[
P_i = \frac{1}{2} (p_{i,1}, \dots, p_{i,d}), \quad i=1,\dots,k,
\]
where
\[
p_{i,j} =
\begin{cases}
(-1)^i, & j = \lceil i/2 \rceil,\\
0, & \text{otherwise.}
\end{cases}
\]

In other words, the nonzero entry of $P_i$ is in coordinate $\lceil i/2 \rceil$, with value $+1/2$ if $i$ is even and $-1/2$ if $i$ is odd; all other coordinates are zero.  

Then for any distinct $i,j \in [k]$, $P_i - P_j$ has either one nonzero coordinate equal to $\pm 1$, or two nonzero coordinates each equal to $\pm 1/2$, so that
\[
\|P_i - P_j\|_1 = 1.
\]

Assign these vectors to the cycles of the $(k,2)$-OCC according to Theorem~\ref{thm:k-OCDC-points}.  
Each edge then receives a flow
\[
f(e) = P_i - P_j \in \Omega,
\]
with $\Omega$ as in Theorem~\ref{thm:k-OCDC-points}, and the $p$-norm of each vector in $\Omega$ is exactly $1$.  

Hence,
\[
\phi_{d,1}(G) = 2.
\]

This completes the proof.
\end{proof}

\section{Discussions and Open Problems}
\subsection{Remarks on Theorem~\ref{thm:3-dim-p-12-flow}}

For convenience, we recall the auxiliary functions:
\[
g_1(p) = \frac{5}{4} \cdot 2^{\,1 - 1/p}, \qquad
g_2(p) = \Bigl[(2^{-1/p} + 3^{-1/p})^p + (2^{-1/p} - 3^{-1/p})^p + \tfrac{1}{3}\Bigr]^{1/p}, \qquad
g_3(p) = 2^{1/p}.
\]
Theorem~\ref{thm:3-dim-p-12-flow} establishes that
\(
\phi_{3,p}(G) \le 1 + \min\{g_1(p), g_2(p),g_3(p)\}\le 1+\sqrt2
\)
for all $p \in [1,\infty]$.

Figure~\ref{fig:three curves} illustrates the behavior of the functions
$g_1$, $g_2$, and $g_3$.  
In particular, note that $g_1(1) = \tfrac{5}{4}$ and $g_2(2) = g_3(2) = \sqrt{2}$.  
Based on the plot, together with numerical verification,  
the curves of $g_1(p)$ and $g_2(p)$ appear to intersect at approximately
$p_0 \approx 1.0913$.  
This observation suggests the following piecewise upper bound:
\[
\phi_{3,p}(G) \le 
\begin{cases}
1 + g_1(p), & p \in [1, p_0],\\
1 + g_2(p), & p \in [p_0, 2],\\
1 + g_3(p), & p \in [2, \infty].
\end{cases}
\]

We emphasize that this piecewise description is heuristic;  
no rigorous proof of the exact crossover behavior is provided here.  
Nevertheless, Theorem~\ref{thm:3-dim-p-12-flow} provides the uniform bound
$1 + \sqrt{2}$, for which a complete proof is given in this paper.

\begin{figure}[htbp]
    \centering
    \includegraphics[width=0.5\linewidth]{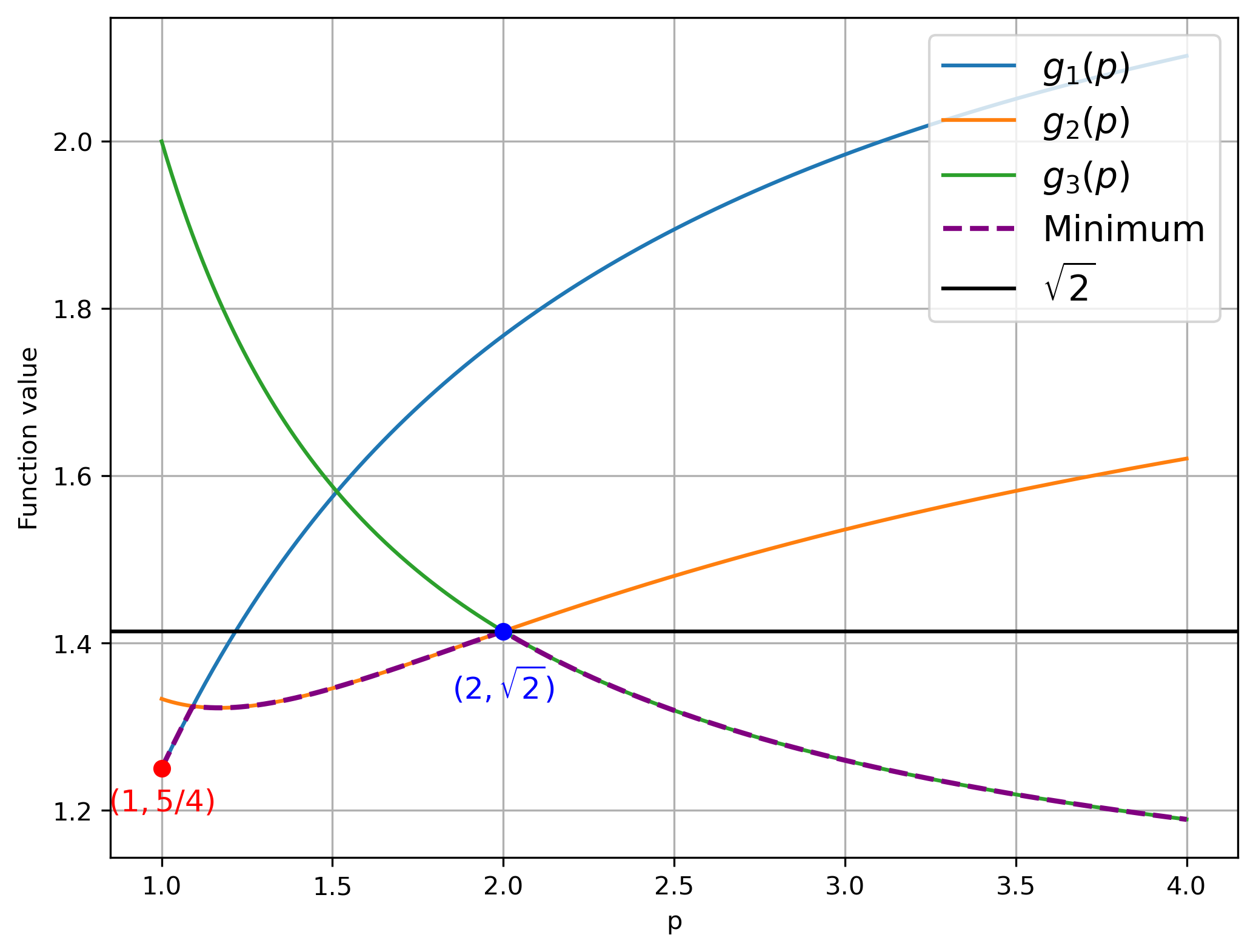}
    \caption{Curves of the functions $g_1(p)$, $g_2(p)$, and $g_3(p)$.}
    \label{fig:three curves}
\end{figure}

\subsection{Open problems and conjectures}

The results of this paper suggest several directions for further research at the intersection of combinatorial flow theory, geometry, and topology. 
We summarize below a few conjectures and problems that we consider particularly interesting. 

\medskip \noindent
{ \bf (a) Monotonicity and Duality of Flow Indices}
\medskip

Our computational experiments indicate that the $p$-normed flow index $\phi_{d,p}(G)$ exhibits a structured behavior and certain symmetries as a function of the norm parameter $p$. 

\begin{conjecture}[Monotonicity of $\phi_{d,p}(G)$]
\label{conj:monotonicity}
Let $G$ be a bridgeless graph and $d \ge 2$. 
As a function of $p$, the quantity $\phi_{d,p}(G)$ is continuous, increasing on $[1,2]$, and decreasing on $[2,\infty]$.
\end{conjecture}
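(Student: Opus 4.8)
\medskip\noindent
Here we sketch a possible line of attack on Conjecture~\ref{conj:monotonicity}, separating the statement into the continuity assertion and the two monotonicity assertions, which we expect to be of quite different difficulty.

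\emph{Continuity.} This part should be routine and follows directly from Lemma~\ref{lem:norm-comparison}. For $p_1<p_2$ the two‑sided estimate
\[
d^{-(1/p_1-1/p_2)}\bigl(\phi_{d,p_2}(G)-1\bigr)\le \phi_{d,p_1}(G)-1\le d^{\,1/p_1-1/p_2}\bigl(\phi_{d,p_2}(G)-1\bigr)
\]
implies $\bigl|\log(\phi_{d,p_1}(G)-1)-\log(\phi_{d,p_2}(G)-1)\bigr|\le |1/p_1-1/p_2|\log d$, so $p\mapsto \log(\phi_{d,p}(G)-1)$ is Lipschitz as a function of $1/p$ (note $\phi_{d,p}(G)-1\ge 1$, so the logarithm is harmless); continuity at $p=\infty$ is obtained by letting $1/p\to 0$ in the same inequality with $p_2=\infty$. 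Thus continuity requires no new ideas.

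\emph{Monotonicity.} The natural route for the interval $[1,2]$ is to show that every $(r,d,p_2)$-NZF induces an $(r,d,p_1)$-NZF whenever $p_1\le p_2\le 2$. The structural fact to exploit is that if $(D,f)$ is a flow and $L\colon\mathbb{R}^d\to\mathbb{R}^d$ is invertible linear then $(D,L\circ f)$ is again a flow, and scalings are free; hence it suffices to prove the geometric statement that for the finite vector set $S\subseteq\mathbb{R}^d$ realized by an optimal $p_2$-flow of $G$ there exist an invertible $L$ and $c>0$ with $c\,L(S)\subseteq\{x:1\le\|x\|_{p_1}\le r-1\}$. One would try to build $L$ from the annulus geometry, using $\|x\|_{p_2}\le\|x\|_{p_1}\le d^{1/p_1-1/p_2}\|x\|_{p_2}$ and the log-convexity of $p\mapsto\|x\|_p$ in the variable $1/p$. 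The decreasing behavior on $[2,\infty]$ would then follow either by the mirror argument or, more cleanly, by first establishing the conjectural duality $\phi_{d,p}(G)=\phi_{d,q}(G)$ for $1/p+1/q=1$ (which holds at the endpoints for $d=2$ by Observation~\ref{ob:2d-1-inf-equiv}) and reducing the $[2,\infty]$ case to the $[1,2]$ case.

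\emph{The main obstacle.} The difficulty is that for a \emph{fixed} flow $f$ the ratio $\phi_p(f)=\max_e\|f(e)\|_p/\min_e\|f(e)\|_p$ is in general \emph{not} monotone in the conjectured direction: as the $K_4$ analysis underlying Theorem~\ref{thm:4-flow-2d-bound} shows, the choice $P_1=(1,0),P_2=(0,1)$ yields ratio $2^{1/p}$, decreasing in $p$, while $P_1=(2^{-1/p},2^{-1/p}),P_2=(2^{-1/p},-2^{-1/p})$ yields ratio $2^{1-1/p}$, increasing in $p$, and $\phi_{2,p}(K_4)-1$ is their minimum, peaking precisely at the crossover $p=2$. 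So the monotonicity is a genuinely global feature of the optimization and cannot be read off from any single extremal flow; a proof appears to demand either a transformation lemma strong enough to realize the reduction above uniformly over all bridgeless graphs, or a structural "balancing" description of extremal $p$-flows forcing many edges to attain the extreme norms. Neither ingredient is presently available — which is why the statement is offered only as a conjecture — and even the weaker claim that $p=2$ globally maximizes $\phi_{d,p}(G)$ seems to require the same tools.
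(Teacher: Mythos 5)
The statement you are addressing is a conjecture in the paper; the authors give no proof of it (they only note that continuity is established in Rieg's thesis \cite{Rieg-Master} and that a related monotonicity statement is conjectured there), so there is no paper proof to compare against, and your proposal should be judged on whether it settles the statement itself. It does not, and you say so candidly. Your continuity argument is correct and complete: Lemma~\ref{lem:norm-comparison} gives, for $p_1<p_2$, the two-sided bound $d^{-(1/p_1-1/p_2)}(\phi_{d,p_2}(G)-1)\le \phi_{d,p_1}(G)-1\le d^{\,1/p_1-1/p_2}(\phi_{d,p_2}(G)-1)$, and since $\phi_{d,p}(G)\ge 2$ always (so $\phi_{d,p}(G)-1\ge 1$) and $\phi_{d,p}(G)<\infty$ for $d\ge 2$ by Theorem~\ref{thm:general-2-3-bounds}, taking logarithms shows $\log(\phi_{d,p}(G)-1)$ is Lipschitz in $1/p$ with constant $\log d$, which yields continuity on $[1,\infty]$ including the endpoint. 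This recovers, by an argument internal to the paper, the one part of the conjecture that is known.

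The genuine gap is the monotonicity on $[1,2]$ and $[2,\infty]$, which is the heart of the conjecture, and your proposed route does not close it. The reduction you suggest --- transporting an optimal $(r,d,p_2)$-flow to a $(r,d,p_1)$-flow via an invertible linear map $L$ and a scaling --- requires precisely the geometric lemma you admit is unavailable: that the finite value set of an extremal $p_2$-flow can always be mapped into the $p_1$-annulus of the same ratio, uniformly over all bridgeless graphs. Your own $K_4$ analysis shows why no soft argument of this kind can work edge-set by edge-set: for a fixed flow the ratio $\max_e\|f(e)\|_p/\min_e\|f(e)\|_p$ can be increasing or decreasing in $p$ depending on the vector configuration, and the conjectured unimodality with peak at $p=2$ is a property of the infimum over all flows, not of any single witness. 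Likewise, deducing the $[2,\infty]$ case from the $[1,2]$ case via the duality $\phi_{d,p}(G)=\phi_{d,q}(G)$ rests on Conjecture~\ref{conj:duality}, which is itself open (and which the authors suspect may fail in its exact $L^p$--$L^q$ form). So what you have is a correct proof of the continuity clause plus a well-diagnosed obstruction analysis for the rest; the monotonicity claims remain unproven, consistent with the statement's status as a conjecture.
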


In communication with Mazzuoccolo, we learned that Rieg's Master's Thesis \cite{Rieg-Master} proves the continuity of $\phi_{d,p}(G)$ as a function of $p$. They also conjectured a similar monotonicity property, stating that $\phi_{d,p}(G)$ is increasing on $[1,p_0]$ and decreasing on $[p_0,\infty]$ for some $p_0$, and we conjecture that $p_0 = 2$.

\begin{conjecture}[Duality between norms]
\label{conj:duality}
Let $G$ be a  bridgeless graph  and $d \ge 2$ be an integer. Then  for any $p \in [1,2]$ there exists $q \in [2,\infty]$ such that
\[
\phi_{d,p}(G) = \phi_{d,q}(G),
\]
and conversely, for each $q \in [2,\infty]$ there exists $p \in [1,2]$ satisfying the same equality.
\end{conjecture}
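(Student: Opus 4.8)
\medskip\noindent\textbf{Proof proposal.}
The plan is to reduce Conjecture~\ref{conj:duality} to two structural properties of the function $p\mapsto\phi_{d,p}(G)$ on $[1,\infty]$: its continuity, and the monotonicity asserted in Conjecture~\ref{conj:monotonicity}. Granting these, the whole conjecture collapses to a single identity between the two extreme norms, which one then attacks on its own.

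First, continuity is already latent in Lemma~\ref{lem:norm-comparison}. Writing $\psi(p):=\phi_{d,p}(G)-1\ge1$, that lemma gives, for $p_1<p_2$ in $[1,\infty]$,
\[
d^{-(1/p_1-1/p_2)}\,\psi(p_2)\le\psi(p_1)\le d^{\,1/p_1-1/p_2}\,\psi(p_2),
\]
so $u\mapsto\log\psi(1/u)$ is Lipschitz with constant $\log d$ on $(0,1]$ and extends continuously to $u=0$; hence $\phi_{d,p}(G)$ is continuous in $p$, with the value at $p=\infty$ equal to $\lim_{p\to\infty}\phi_{d,p}(G)$. Now assume Conjecture~\ref{conj:monotonicity}. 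Then $\phi_{d,\cdot}(G)$ is increasing on $[1,2]$ and decreasing on $[2,\infty]$, so by the intermediate value theorem its image on $[1,2]$ is the closed interval $[\phi_{d,1}(G),\phi_{d,2}(G)]$ and its image on $[2,\infty]$ is $[\phi_{d,\infty}(G),\phi_{d,2}(G)]$. Since both intervals share the right endpoint $\phi_{d,2}(G)$, they coincide — which is exactly the two-sided matching asserted in Conjecture~\ref{conj:duality} — if and only if
\[
\phi_{d,1}(G)=\phi_{d,\infty}(G).
\]
For $d=2$ this is precisely Observation~\ref{ob:2d-1-inf-equiv}, because in the plane the $\ell^1$ and $\ell^\infty$ unit balls are squares differing by a $45^\circ$ rotation and a dilation; thus for $d=2$ the duality conjecture follows as soon as Conjecture~\ref{conj:monotonicity} is established.

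The main obstacle is that, for general $d$, neither remaining input is currently available. Proving Conjecture~\ref{conj:monotonicity} — and in particular that $p=2$ is the \emph{unique} maximizer of $\phi_{d,p}(G)$ — seems to call for a genuinely new idea that uses the rotational invariance of the Euclidean norm, rather than the one-sided distortion estimates of Lemma~\ref{lem:norm-comparison}. And for $d\ge3$ the endpoint identity $\phi_{d,1}(G)=\phi_{d,\infty}(G)$ is no longer a triviality: the $\ell^1$ and $\ell^\infty$ unit balls are mutually dual polytopes with $2d$ and $2^d$ vertices respectively, hence not affinely equivalent for $d\ge3$, so the planar linear-map argument has no analogue. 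Concretely, for $d=3$ one has $\phi_{3,\infty}(G)=2$ unconditionally (Theorem~\ref{thm:general-2-3-bounds}(2)), whereas $\phi_{3,1}(G)=2$ for every bridgeless $G$ is at present known only under the Oriented $5$-Cycle Double Cover Conjecture (Corollary~\ref{cor}(1)); so the $d=3$ case of Conjecture~\ref{conj:duality} would follow from Conjecture~\ref{conj:monotonicity} together with that cycle-cover conjecture, and seems genuinely entangled with it. A route that avoids resolving the endpoint identity would be to compare the two image intervals directly through a finer, instance-dependent analysis; another would be to establish $\phi_{d,1}(G)=\phi_{d,\infty}(G)$ by a combinatorial transformation that rebuilds an optimal $\ell^1$-flow into an $\ell^\infty$-flow of the same max/min ratio (and conversely) by locally recombining the underlying cycles, in place of a global linear change of coordinates. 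Both routes remain to be carried out.
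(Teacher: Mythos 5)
The statement you are addressing is one of the paper's open conjectures (stated in Section~5); the paper offers no proof of it, so there is no argument of the authors to compare against. Your proposal, as you yourself acknowledge, is not a proof either: it is a conditional reduction. That reduction is, however, correct and worth recording. The Lipschitz estimate you extract from Lemma~\ref{lem:norm-comparison} — that $u\mapsto\log\bigl(\phi_{d,1/u}(G)-1\bigr)$ is $\log d$-Lipschitz on $(0,1]$ and extends to $u=0$ with value $\log\bigl(\phi_{d,\infty}(G)-1\bigr)$ — does give continuity of $p\mapsto\phi_{d,p}(G)$ on $[1,\infty]$ (the paper only cites Rieg's thesis for this), and granted Conjecture~\ref{conj:monotonicity} the two images are the intervals $[\phi_{d,1}(G),\phi_{d,2}(G)]$ and $[\phi_{d,\infty}(G),\phi_{d,2}(G)]$, so Conjecture~\ref{conj:duality} becomes equivalent to the single identity $\phi_{d,1}(G)=\phi_{d,\infty}(G)$. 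For $d=2$ that identity is Observation~\ref{ob:2d-1-inf-equiv}, so your chain "monotonicity $\Rightarrow$ duality" is sound in the plane.

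The genuine gap is that both remaining inputs are open, and for $d\ge3$ the endpoint identity is not a minor technicality but carries real strength: combined with $\phi_{3,\infty}(G)=2$ (Theorem~\ref{thm:general-2-3-bounds}(2)), it would force $\phi_{3,1}(G)=2$ for \emph{every} bridgeless graph, a statement currently known only as a consequence of the Oriented $5$-Cycle Double Cover Conjecture (Corollary~\ref{cor}(1)). So your reduction shows that Conjectures~\ref{conj:monotonicity} and~\ref{conj:duality} together are at least as strong as that flow statement, which is a useful structural remark, but it does not advance the conjecture unconditionally. Also note that neither suggested escape route is sketched in any detail: the "instance-dependent comparison of image intervals" still presupposes monotonicity, and the proposed combinatorial recombination of an optimal $\ell^1$-flow into an $\ell^\infty$-flow with the same max/min ratio is exactly the hard content of the endpoint identity for $d\ge3$ (the two unit balls are non-equivalent polytopes, as you say), so as it stands nothing beyond the conditional equivalence has been established.
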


In communication with Mazzuoccolo, we also learned that Rieg \cite{Rieg-Master} formulated a similar duality conjecture, where $q$ is taken as the usual $L^p$–$L^q$ dual, i.e., $1/p + 1/q = 1$. However, we believe that this exact duality may not hold in general; there could be slight deviations from the classical $L^p$–$L^q$ correspondence.

\medskip \noindent
{ \bf (b) Connections with Circular Flows}
 \medskip 
 
By Corollary~\ref{Cor:3-4-flow}, if a graph $G$ admits a $3$-NZF, then $\phi_{2,p}(G) = 2$ for all $p \in [1,\infty]$. 
In contrast, if $G$ admits a $4$-NZF, we only know that $\phi_{2,p}(G) = 2$ for $p = 1$ and $p = \infty$. 
For example, the complete graph $K_4$ admits a $4$-NZF but satisfies $\phi_{2,2}(K_4) = 1 + \sqrt{2} > 2$~\cite{mattiolo2023d}. 

These observations motivate the following conjecture in the context of classical circular flows.

\begin{conjecture}\label{conj:circular-NZF-AF}
For $\epsilon \in [0,1]$, there exist two functions $f$ and $g$, with $f(\epsilon)\in[2,+\infty]$ strictly increasing and $g(\epsilon)\in[1,2]$ strictly decreasing, such that if a graph $G$ admits a circular $(3+\epsilon)$-NZF, then
\[
\phi_{2,p}(G) = 2 \quad \text{for all } p \ge f(\epsilon) \text{ and for all } 1 \le p \le g(\epsilon).
\]
\end{conjecture}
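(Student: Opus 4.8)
\noindent\emph{A possible approach toward Conjecture~\ref{conj:circular-NZF-AF}.} Since every bridgeless graph with an edge satisfies $\phi_{2,p}(G)\ge 2$ and the flow index is attained, proving $\phi_{2,p}(G)=2$ is exactly the problem of exhibiting a vector $(\mathbb{R}^2\setminus\{\mathbf 0\})$-flow all of whose edge values have the same $p$-norm. The two endpoints of the conjectured range are already settled: for $\epsilon=0$ a circular $3$-flow is a $3$-NZF, so Corollary~\ref{Cor:3-4-flow}(1) gives $\phi_{2,p}(G)=2$ for every $p$, forcing $f(0)=2$ and $g(0)=2$; for $\epsilon=1$ a circular $4$-flow is a $4$-NZF, so Theorem~\ref{thm:4-flow-2d-bound} gives $\phi_{2,1}(G)=\phi_{2,\infty}(G)=2$, while $\phi_{2,2}(K_4)=1+\sqrt2$ shows the range cannot be enlarged, so $f(1)=\infty$ and $g(1)=1$. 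The plan is to interpolate between these two configurations.

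\emph{Structural reduction.} Fix a rational $\epsilon=s/t\in(0,1)$ and suppose $G$ admits a circular $(3+\epsilon)$-flow; equivalently $G$ has a nowhere-zero $(3t+s,t)$-flow, i.e. an orientation $D$ together with an integer flow $f$ satisfying $t\le f(e)\le 2t+s$ for every edge. By the theorem of Little, Tutte and Younger invoked in the proof of Lemma~\ref{thm:234-flow}, this flow is realized by $N:=2t+s$ directed cycles $C_1,\dots,C_N$ with each edge contained in exactly $f(e)$ of them. Exactly as in Theorem~\ref{thm:k-OCDC-points}, assigning vectors $P_1,\dots,P_N\in\mathbb{R}^2$ to these cycles yields an $\Omega$-flow of $G$ with
\[
\Omega=\Bigl\{\,\textstyle\sum_{i\in S}P_i\ :\ S\subseteq\{1,\dots,N\},\ t\le|S|\le 2t+s\,\Bigr\}.
\]
By Lemma~\ref{lem:Omega-flow-phi}, it then suffices to choose the $P_i$ so that every element of $\Omega$ has the same $p$-norm. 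For irrational $\epsilon$ one uses that the circular flow index of $G$ is rational and $\le 3+\epsilon$, so $G$ in fact admits a circular $(3+\epsilon'')$-flow for some rational $\epsilon''\le\epsilon$; provided the admissible norm thresholds for rational parameters are monotone, $f$ and $g$ extend monotonically to all of $[0,1]$.

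\emph{The vector construction.} The core of the argument is to find, for each rational $\epsilon=s/t\in(0,1)$, a family $P_1,\dots,P_{2t+s}\in\mathbb{R}^2$ and the widest interval of norms $p$ near $1$ — and, dually, near $\infty$ — for which every subset sum of size between $t$ and $2t+s$ lies on a common $p$-sphere. At $p\in\{1,\infty\}$ the unit $p$-sphere in $\mathbb{R}^2$ is a square, and it is this polygonal geometry that the $3$-NZF and $4$-NZF constructions exploit (e.g. $P_i\in\{(a,a),(a,-a)\}$); at $p=2$ no restriction on $\epsilon$ is possible, since already $\phi_{2,2}(K_4)>2$. One expects the required families to arise as controlled perturbations of the equal-norm configuration underlying the $3$-NZF case — which is available for \emph{every} $p$ — degenerating toward the rigid $4$-NZF configuration as $\epsilon\to 1$; the resulting thresholds $f(\epsilon)$ and $g(\epsilon)$ would then be monotone, with $f(\epsilon)\to\infty$, $g(\epsilon)\to 1$ as $\epsilon\to 1$ and $f(\epsilon),g(\epsilon)\to 2$ as $\epsilon\to 0$, and can be made strictly monotone by exhibiting, for densely many $\epsilon$, a graph whose circular flow index equals $3+\epsilon$ and which is strictly harder. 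I expect this step to be the main obstacle: a naive family built from only two fixed directions with multiplicities already fails to equalize the $p$-norms of subset sums over a range of sizes, so a genuinely new construction seems necessary, and even establishing that \emph{some} $p\notin\{1,\infty\}$ works for a fixed $\epsilon\in(0,1)$ — that is, that $f(\epsilon)<\infty$ and $g(\epsilon)>1$ — is already the essential difficulty.
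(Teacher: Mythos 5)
This statement is not a theorem of the paper but an open problem: it appears verbatim as Conjecture~\ref{conj:circular-NZF-AF} in Section~5, and the paper supplies no proof, only the two pieces of evidence that you also identify. Your endpoint analysis is correct and coincides with that evidence: at $\epsilon=0$ a circular $3$-flow is a $3$-NZF and Corollary~\ref{Cor:3-4-flow}(1) gives $\phi_{2,p}(G)=2$ for all $p$, while at $\epsilon=1$ Theorem~\ref{thm:4-flow-2d-bound} gives $\phi_{2,1}(G)=\phi_{2,\infty}(G)=2$ and the sharpness statement (via $\phi_{2,2}(K_4)=1+\sqrt2$, indeed $\phi_{2,p}(K_4)>2$ for all $p\in(1,\infty)$) shows no larger range of $p$ is possible for the whole class. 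Your reduction of the rational case $\epsilon=s/t$ to an equal-$p$-norm assignment on the $2t+s$ Little--Tutte--Younger cycles, fed into Lemma~\ref{lem:Omega-flow-phi}, is a reasonable formalization in the spirit of Section~2 and of Theorem~\ref{thm:k-OCDC-points}.

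However, what you submit is a program, not a proof, and the gap is exactly where you locate it: nothing is constructed for any $\epsilon\in(0,1)$. To get $f(\epsilon)<\infty$ and $g(\epsilon)>1$ you must exhibit, for each such $\epsilon$ and some $p\notin\{1,\infty\}$, vectors $P_1,\dots,P_{2t+s}\in\mathbb{R}^2$ whose relevant subset sums all share one $p$-norm, and no such family is given. Worse, the reduction as literally stated is too strong: requiring \emph{every} subset sum of size between $t$ and $2t+s$ in the full $\Omega$ to lie on a common $p$-sphere is generally impossible (already for $t=1$ three cycles through one edge can force a sum equal to $\mathbf{0}$), so one must exploit which subsets actually occur on edges; note that the paper's own $4$-NZF bound is not obtained from the LTY positive cycles but from a two-cycle cover with values $P_1,P_2,P_1\pm P_2$ (Lemma~\ref{thm:234-flow}(3)), a genuinely different decomposition. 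In addition, the strict monotonicity of $f$ and $g$ demands matching \emph{lower} bounds --- for densely many $\epsilon$, graphs of circular flow index exactly $3+\epsilon$ with $\phi_{2,p}>2$ on an appropriate $p$-range, in the spirit of \cite{mattiolo2024lower} --- and your treatment of irrational $\epsilon$ presupposes the very monotonicity of the thresholds that is to be proved. So the endpoints are verified, the reduction is a plausible but unproven framework, and the conjecture remains open.
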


\medskip \noindent
{ \bf (c) Connections with Oriented Cycle Double Covers}
\medskip 

The relationship between high-dimensional flows and oriented $k$-cycle double covers remains central to understanding the extremal behavior of $\phi_{d,p}(G)$. 
Our Theorem~\ref{thm:intro-k-OCDC-phi} shows that an oriented $k$-cycle double cover ensures $\phi_{d,p}(G)=2$ when $d=k-1$. 
This leads to the following broader question.

\begin{problem}
Characterize all graphs $G$ for which $\phi_{d,p}(G)=2$ holds for some pair $(d,p)$, even when $G$ does not admit an oriented $(d+1)$-cycle double cover.
\end{problem}

\medskip \noindent
{\bf (d) Toward the $S^2_p$-Flow Conjecture}

\medskip 

Denote $S_{p}^{d-1} := \{\bs{x} \in \R^d : \lVert \bs{x} \rVert_p = 1 \}$.  
Jain’s $S^2$-Flow Conjecture asserts that every bridgeless graph admits a unit-sphere flow in $\mathbb{R}^3$ (i.e., $\phi_{3,2}(G)=2$).  
Our results show that $\phi_{3,\infty}(G)=2$, and when combined with Conjectures~\ref{conj:monotonicity} and \ref{conj:duality}, they naturally suggest the following generalization.

\begin{conjecture}[$S^2_p$-Flow Conjecture]
For every bridgeless graph $G$ and every $p \in[1,\infty]$,
\[
\phi_{3,p}(G) = 2.
\]
\end{conjecture}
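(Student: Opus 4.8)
The plan is to treat the $S^2_p$-Flow Conjecture as a two-parameter strengthening of Jain's $S^2$-Flow Conjecture (which is exactly the slice $p=2$) and of the already-proved identity $\phi_{3,\infty}(G)=2$ (Theorem~\ref{thm:general-2-3-bounds}(2)), and to split the work into a conditional reduction that collapses the family $p\in[1,\infty]$ onto a single value of $p$, together with an attempt at unconditional partial progress. As a preliminary I would record that $\phi_{3,p}(G)\ge 2$ always holds for bridgeless $G$, since the admissible $r$ in Definition~\ref{def:annular-flow} range over $[2,\infty]$; thus the conjecture is equivalent to the upper bound $\phi_{3,p}(G)\le 2$. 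Moreover, by Corollary~\ref{Cor:3-4-flow}(2) it already holds whenever $G$ admits a $4$-NZF, so one may assume throughout that $G$ admits no nowhere-zero $4$-flow.

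\textbf{Conditional reduction.} I would first try to derive the conjecture from the Monotonicity Conjecture (Conjecture~\ref{conj:monotonicity}) together with Jain's $S^2$-Flow Conjecture: if $p\mapsto\phi_{3,p}(G)$ is non-decreasing on $[1,2]$ and non-increasing on $[2,\infty]$, then $\phi_{3,p}(G)\le\phi_{3,2}(G)=2$ for every $p$, hence equality. Within this route the substantive step is Conjecture~\ref{conj:monotonicity}; its continuity part is already due to Rieg~\cite{Rieg-Master}, so what remains is the unimodal shape with peak at $p=2$. A natural attack is to fix $G$ and a finite symbolic set $\Omega$ (say from a spanning tree, as in Section~2) and analyse how the optimal value of the min--max problem~\eqref{eq:optimization} depends on $p$, exploiting that $\|x\|_p$ is log-convex as a function of $1/p$ for each fixed $x\in\mathbb{R}^3$; the difficulty is that taking the minimum over vector configurations and carrying the lower-bound constraints $\|x\|_p\ge1$ do not interact transparently with this convexity. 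The Duality Conjecture (Conjecture~\ref{conj:duality}) could be used alongside, pairing $p=1$ with $p=\infty$ to transport the known value $\phi_{3,\infty}(G)=2$ to $p=1$.

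\textbf{Direct structural attack.} In parallel I would try to construct, for every bridgeless $G$, an $\Omega$-flow with $\Omega$ lying entirely on the unit $p$-sphere $S_p^{2}\subset\mathbb{R}^3$; by Lemma~\ref{lem:Omega-flow-phi} this forces $\phi_{3,p}(G)=2$. The case $p=\infty$ is exactly this, with $\Omega$ the ten signed sums of $P_1,P_2,P_3$ from Lemma~\ref{thm:6-flow-points} and $(P_1,P_2,P_3)$ the standard basis. For finite $p$ this Seymour-decomposition scheme provably fails: membership of $P_1,P_3,P_1+P_3,P_1-P_3$ in $\Omega$ forces $\|P_1\|_p=\|P_3\|_p=\|P_1+P_3\|_p=\|P_1-P_3\|_p=1$, which is infeasible at $p=2$ (it would force $P_1\cdot P_3$ to equal both $-\tfrac12$ and $\tfrac12$), so Theorem~\ref{ob:6-flow-decomposition} alone cannot yield $\phi_{3,p}(G)=2$ when $p<\infty$. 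One therefore needs a stronger uniform structural input. The $(k,2l)$-OCC machinery of Theorem~\ref{thm:intro-k-OCDC-phi}(1) reaches dimension $3$ only from a $(4,2l)$-OCC, and for $l\ge2$ such a cover is already too dense to exist in the Petersen graph (whose even subgraphs span at most $10$ of its $15$ edges, so $4$ cycles carry at most $40<60$ edge--cycle incidences), while $l=1$ is just a $4$-NZF, excluded by assumption. Since nonetheless $\phi_{3,2}(P_{10})=2$ holds (computationally), there must be a non-cover certificate; isolating a family of such $\mathbb{R}^3$-realizable certificates possessed by every bridgeless graph is, in effect, the combinatorial content of the conjecture.

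\textbf{Main obstacle.} The hard part is essential rather than technical: the slice $p=2$ is Jain's $S^2$-Flow Conjecture, which together with Jain's $S^2$-Map Conjecture implies Tutte's $5$-Flow Conjecture, so an unconditional proof is out of reach with current methods. The realistic deliverables are (a) the conditional implication from Conjectures~\ref{conj:monotonicity} and~\ref{conj:duality} together with Jain's conjecture, and (b) enlarging the set of $p$ for which $\phi_{3,p}(G)=2$ is known unconditionally --- currently only $p=\infty$. The most accessible next milestone in direction (b) is to improve Theorem~\ref{thm:general-2-3-bounds}(3) from $\phi_{3,1}(G)\le\tfrac94$ to $\phi_{3,1}(G)=2$, which by Corollary~\ref{cor}(1) would already follow from the Oriented $5$-Cycle Double Cover Conjecture and would pin down the endpoint $p=1$.
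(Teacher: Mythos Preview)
The statement you were given is not a theorem but an open conjecture: it appears in the paper's final section on open problems, and the paper offers no proof. You have correctly recognised this, and your write-up is not a proof but a research plan together with an explanation of why no proof is currently available. That diagnosis is accurate: the slice $p=2$ is exactly Jain's $S^2$-Flow Conjecture, so an unconditional proof here would be at least as hard as settling Jain's conjecture.

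Your technical observations are sound. The conditional reduction via Conjecture~\ref{conj:monotonicity} plus Jain's conjecture is valid as stated. The impossibility argument for placing the Seymour ten-point set $\Omega$ of Lemma~\ref{thm:6-flow-points} on the Euclidean unit sphere is correct (the parallelogram identity forces $P_1\cdot P_3$ to take two different values), as is the counting argument that rules out a $(4,2l)$-OCC of the Petersen graph for $l\ge 2$; together with the $l=1$ case being equivalent to a $4$-NZF, this cleanly shows that the cycle-cover route of Theorem~\ref{thm:intro-k-OCDC-phi} cannot reach dimension~$3$ for Petersen. Your identification of $\phi_{3,1}(G)=2$ as the natural next target, and its link to the Oriented $5$-Cycle Double Cover Conjecture via Corollary~\ref{cor}(1), matches the paper's own perspective.

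In short: there is no ``paper's proof'' to compare against, and your proposal is an appropriate and well-argued acknowledgement that the statement is conjectural, with a reasonable outline of conditional and partial approaches.
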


\section*{Acknowledgments}
The authors would like to thank Professor Giuseppe Mazzuoccolo for helpful suggestions and for drawing our attention to the related references \cite{gaborik2025manhattan, Rieg-Master}. Jiaao Li is partially supported by National Key Research and Development Program of China (No. 2022YFA1006400), National Natural Science Foundation of China (Nos. 12571371, 12222108), Natural Science Foundation of Tianjin (No. 24JCJQJC00130), and the Fundamental Research Funds for the Central Universities, Nankai University. Rong Luo is partially supported by a grant from  Simons Foundation (No. 839830).

\end{document}